\numberwithin{equation}{section}
\newtheorem{thrm}{Theorem}[section]
\newtheorem{lemma}[thrm]{Lemma}
\newtheorem{prop}[thrm]{Proposition}
\newtheorem{cor}[thrm]{Corollary}
\newtheorem{dfn}[thrm]{Definition}
\newtheorem{rmrk}[thrm]{Remark}
\newtheorem{conv}[thrm]{Convention}
\newcommand{\QH}{\boldsymbol {G\,(\mathbb{H})}}
\begin{document}

\begin{abstract}
We construct
left invariant quaternionic contact (qc)
structures on Lie groups with zero and non-zero torsion and with
non-vanishing quaternionic contact  conformal curvature tensor,
thus showing the existence of non-flat quaternionic contact
manifolds. We prove that the product of the real
line with a seven dimensional manifold, equipped with a certain qc structure,
has a quaternionic K\"ahler metric as well as
a metric with holonomy contained in $Spin(7)$.
As a consequence we determine explicit quaternionic K\"ahler metrics
and $Spin(7)$-holonomy metrics which seem to be { new.}
Moreover, we give
explicit  non-compact eight dimensional almost quaternion
hermitian manifolds with either a closed fundamental four form or fundamental two forms defining a differential ideal that are not quaternionic K\"ahler.
\end{abstract}

\keywords{quaternionic contact structures, Einstein structures, qc
conformal flatness, qc conformal curvature, quaternionic K\"ahler
structures, Spin(7)-holonomy metrics, quaternionic K\"ahler and
hyper K\"ahler metrics}

\subjclass[2000]{58J60, 53C26}

\title[Quaternionic K\"ahler and Spin(7) metrics arising from  qc Einstein structures ]
{Quaternionic K\"ahler and Spin(7) metrics arising from  quaternionic contact Einstein structures }
\date{\today }
\thanks{This work has been partially funded by grant MCINN (Spain)
MTM2008-06540-C02-01/02}
\author{L.C. de Andr\'es}
\address[Luis C. de Andr\'es, Marisa Fern\'andez, Jos\'e A. Santisteban]{Universidad del
Pa\'{\i}s Vasco\\
Facultad de Ciencia y Tecnolog\'{\i}a, Departamento de Matem\'aticas\\
Apartado 644, 48080 Bilbao\\
Spain} \email{luisc.deandres@ehu.es} \email{marisa.fernandez@ehu.es}
\email{joseba.santisteban@ehu.es}
\author{M. Fern\'andez}
\author{S. Ivanov}
\address[Stefan Ivanov]{University of Sofia, Faculty of Mathematics and
Informatics, blvd. James Bourchier 5, 1164, Sofia, Bulgaria}
\address{and Department of Mathematics,
University of Pennsylvania, Philadelphia, PA 19104-6395}
\email{ivanovsp@fmi.uni-sofia.bg}
\author{J.A. Santisteban}
\author{L. Ugarte}
\address[Luis Ugarte]{Departamento de Matem\'aticas\,-\,I.U.M.A.\\
Universidad de Zaragoza\\
Campus Plaza San Francisco\\
50009 Zaragoza, Spain} \email{ugarte@unizar.es}
\author{D. Vassilev}
\address[Dimiter Vassilev]{ Department of Mathematics and Statistics\\
University of New Mexico\\
Albuquerque, New Mexico, 87131-0001} \email{vassilev@math.unm.edu}
\maketitle \tableofcontents

\setcounter{tocdepth}{2}

\section{Introduction}

It is well known that the sphere at infinity of a  non-compact
symmetric space $M$ of rank one carries a natural
Carnot-Carath\'eodory structure (see \cite{M,P}). Quaternionic
contact  structures were introduced by Biquard in \cite{Biq1,Biq2},
and they appear naturally as the conformal boundary at infinity of
quaternionic K\"ahler spaces.  Such structures are also relevant
for the quaternionic contact Yamabe problem which is naturally
connected with the extremals and the best constant in an associated
Sobolev-type (Folland-Stein \cite{FS}) embedding on the quaternionic
Heisenberg group \cite{Wei,IMV,IMV1}.

Following Biquard, \cite{Biq1,Biq2}, quaternionic contact
structure (\emph{qc structure}) on a real $(4n+3)$-dimensional
manifold $M$ is a codimension three distribution $H$ locally given
as the kernel of a $\mathbb{R}^3$-valued $1$-form
$\eta=(\eta_1,\eta_2,\eta_3)$, such that, the three $2$-forms
$d\eta_i|_H$ are the (local) fundamental forms of a quaternionic
structure on $H$. The 1-form $\eta$ is determined up to a
conformal factor and the action of $SO(3)$ on $\mathbb{R}^3$, and
therefore $H$ is equipped with a conformal class $[g]$ of
Riemannian metrics. The transformations preserving a given qc
structure $\eta$, i.e. $\bar\eta=\mu\Psi\eta$ for a positive
smooth function $\mu$ and a non-constant $SO(3)$ matrix $\Psi$ are
called \emph{quaternionic contact conformal (qc conformal for
short) transformations}. If the function $\mu$ is constant we have
\emph{qc-homothetic transformations}. To every metric in the fixed
conformal class $[g]$ on $H$ one can associate a linear connection
preserving the qc structure, see \cite{Biq1}, which we shall call
the Biquard connection.
This connection is
invariant under qc homothetic transformations but changes in a
non-trivial way under qc conformal transformations. { The torsion endomorphism of the Biquard connection is the
obstruction for a qc structure to be locally qc homothetic to a 3-Sasakian \cite{IMV,IV1,IMV2} and its vanishing is
equivalent to the vanishing of the trace-free part of the horizontal qc-Ricci
forms, i.e. to the condition that the qc structure is qc Einstein \cite{IMV}.

The quaternionic Heisenberg group $\boldsymbol{G\,(\mathbb{H})}$
with its standard left-invariant qc structure is locally the unique (up to
a $SO(3)$-action) example of a qc structure with flat Biquard
connection \cite{IMV}. { In fact, the vanishing of the curvature  on
$H$ implies the flatness of the Biquard connection \cite{IV}.}
The quaternionic Cayley transform is a qc
conformal equivalence between the standard 3-Sasakian structure on
the $(4n+3)$-dimensional sphere $S^{4n+3}$ minus a point and the
flat qc structure on $\boldsymbol{G\,(\mathbb{H})}$ \cite{IMV}.
All qc structures locally qc conformal to
$\boldsymbol{G\,(\mathbb{H})}$ and $S^{4n+3}$ are characterized in
\cite{IV} by the vanishing of a tensor invariant, the qc-conformal
curvature $W^{qc}$ defined in terms of the curvature and torsion
of the Biquard connection.

Examples of qc manifolds  arising from quaternionic K\"ahler
deformations are given in \cite{Biq1,Biq2,D1}.
Duchemin shows \cite{D1} that for any qc manifold there exists a
quaternionic K\"ahler manifold such that the qc manifold is realized
as a hypersurface. However, the embedding in his construction is not
isometric and it is difficult to write an explicit expression of the
quaternionic K\"ahler metric except the 3-Sasakian case where the
cone metric is hyperK\"ahler.

One purpose of this paper is to find new explicit examples of   qc
structures. We construct explicit left invariant qc structures on
 three Lie groups  of dimension seven ( that we call $L_1$, $L_2$ and $L_3$)
with zero and non-zero torsion endomorphism of the
Biquard connection for which the qc-conformal curvature tensor does
not vanish, $W^{qc}\not=0$, thus showing the existence of qc
manifolds not locally qc conformal to the quaternionic Heisenberg
group $\boldsymbol{G\,(\mathbb{H})}$. We present a left invariant qc
structure with zero torsion endomorphism of the Biquard connection on a seven
dimensional non-nilpotent Lie group $L_1$. Surprisingly, we obtain
that this qc structure is locally qc conformal to the flat qc
structure on the two-step nilpotent quaternionic Heisenberg group
$\boldsymbol{G\,(\mathbb{H})}$ showing that the qc conformal
curvature is zero and applying the main result in \cite{IV}.
Consequently, this fact yields the existence of a local function
$\mu$ such that the qc conformal transformation $\bar\eta=\mu\eta$
preserves  the vanishing of the torsion of the Biquard connection.

The second goal of the paper is to construct explicit quaternionic
K\"ahler and $Spin(7)$-holonomy metrics, i.e. metrics with
holonomy contained in $Sp(n)Sp(1)$ and $Spin(7)$, respectively, on a product of
a qc manifold with a real line. We generalize the notion of a qc
structure, namely, we define  an $Sp(n)Sp(1)$-hypo structure on a
$(4n+3)$-dimensional manifold as the structure induced on an orientable hypersurface
of a quaternionic K\"ahler manifold. { We show that a qc
structure is an $Sp(n)Sp(1)$-hypo precisely when its fundamental
four form, defined by (\ref {fund4}), vanishes.}
 In Theorem~\ref{qkmetric}, we prove that there is a quaternionic K\"ahler structure
on the product of the real line with a smooth qc Einstein manifold
of dimension bigger than seven.  In dimension seven, we show that
the product of a real line with a qc Einstein structure of
constant qc scalar curvature has a quaternionic K\"ahler metric
(Theorem~\ref{qkmetric}) { as well as} a metric with holonomy
contained in $Spin(7)$ (Theorem~\ref{spin7metric}). The construction and the properties of the obtained metrics with
special holonomy depends on the sign (or the vanishing) of the qc
scalar curvature. { In the  negative}
qc scalar curvature case, we present explicit quaternionic
K\"ahler metrics, { complete as sub-Riemannian metrics,} and
$Spin(7)$-holonomy metrics on the product of the seven dimensional
Lie groups $L_1$ and $L_2$ with the real line, some of which seem
to be new. { In the case of zero qc scalar curvature, using
the} quaternionic Heisenberg group, we rediscover the complete
Einstein metric on an eight-dimensional solvable Lie group
constructed  by Gibbons at al in \cite{GLPS} as an Einstein metric
starting with a $T^3$ bundle over $T^4$, \cite[equation
(148)]{GLPS}. Thus, we show that the Einstein metric in dimension
eight discovered in \cite{GLPS} is in fact a quaternionic K\"ahler
metric  and extend it to obtain complete quaternionic K\"ahler
metrics on a $4n+4$ dimensional solvable Lie groups constructed on
$\boldsymbol{G\,(\mathbb{H})}\times \mathbb R$.
 In the positive qc scalar curvature case we give a general construction which includes well
 known metrics on the product of a 3-Sasakian manifold with the real line.

It is well known that in dimension eight { an almost quaternion
hermitian} structure with closed fundamental  four form is not necessarily quaternionic K\"ahler \cite{Sw}. This fact was confirmed
by Salamon constructing in \cite{Sal} a compact example of { an
almost quaternion hermitian} manifold with closed fundamental four
form which is not Einstein, and therefore it is not a quaternionic
K\"ahler. We give a three parameter family of explicit
non-compact eight dimensional { almost quaternion hermitian
manifolds} with closed fundamental four form which are not
quaternionic K\"ahler. We also check that these examples are not
Einstein.

To the best of our knowledge there is no {known example of  an
almost quaternion hermitian}  eight dimensional manifold with closed
fundamental four form which is Einstein but not quaternionic
K\"ahler.

Finally, we obtain an explicit family of almost quaternion
hermitian  structures such that the fundamental 2-forms  define a differential ideal but the structure is
not a quaternionic K\"{a}hler manifold, see also \cite{Macia} for earlier examples.

\begin{conv}
\label{conven} \hfill\break\vspace{-15pt}
\begin{enumerate}
\item[a)] We shall use $X,Y,Z,U$ to denote horizontal vector fields, i.e.
$X,Y,Z,U\in H$;
\item[b)] $\{e_1,\dots,e_{4n}\}$ denotes a  local orthonormal basis of the horizontal
space $H$;
\item[c)] The triple $(i,j,k)$ denotes any cyclic permutation of $(1,2,3)$.
\item[d)] $s$  will be any number from the set $\{1,2,3\}, \quad
s\in\{1,2,3\}$.
\end{enumerate}
\end{conv}

\textbf{Acknowledgments} We thank  Charles Boyer for useful
conversions leading to Remark \ref{r:CBoyer}.

The research was initiated during the visit of the third author to
the Abdus Salam ICTP, Trieste as a Senior Associate, Fall 2008. He
also thanks ICTP for providing the support and an excellent research
environment. S.I. is partially supported by the Contract 082/2009
with the University of Sofia `St.Kl.Ohridski'. S.I and D.V. are
partially supported by Contract ``Idei", DO 02-257/18.12.2008 and DID 02-39/21.12.2009. This
work has been also partially supported through grant MCINN (Spain)
MTM2008-06540-C02-01/02.

\section{Quaternionic contact manifolds}

In this section we will briefly review the basic notions of
quaternionic contact geometry and recall some results from
\cite{Biq1}, \cite{IMV} and \cite{IV} which we will use in this
paper.

\subsection{Quaternionic contact structures and the Biquard connection}

A quaternionic contact (qc) manifold $(M, g, \mathbb{Q})$ is a
$4n+3$-dimensional manifold $M$ with a codimension three
distribution $H$  locally given as the kernel of a 1-form
$\eta=(\eta_1,\eta_2,\eta_3)$ with values in $\mathbb{R}^3$. In addition $H$ has an $Sp(n)Sp(1)$ structure, that is, it is
equipped with a Riemannian metric $g$ and a rank-three bundle
$\mathbb Q$ consisting of
endomorphisms of $H$ locally generated
by three almost complex structures $I_1,I_2,I_3$ on $H$ satisfying
the identities of the imaginary unit quaternions,
$I_1I_2=-I_2I_1=I_3, \quad I_1I_2I_3=-id_{|_H}$ which are
hermitian compatible with the metric $g(I_s.,I_s.)=g(.,.)$ and the following compatibility condition holds
$\qquad 2g(I_sX,Y)\ =\ d\eta_s(X,Y), \quad X,Y\in H.$

A special phenomena, noted in \cite{Biq1}, is that the contact
form $\eta$ determines the  quaternionic structure and the metric on
the horizontal distribution in a unique way.

Correspondingly, given a qc manifold we shall denote with
$\eta$ any associated contact form. The associated contact form is
determined up to  an $SO(3)$-action, namely if $\Psi\in SO(3)$ then
$\Psi\eta$ is again a contact form satisfying the above
compatibility condition (rotating also the almost complex
structures). On the other hand, if we consider the conformal class
$[g]$ on $H$, the associated contact forms are determined up to a
multiplication with a positive conformal factor $\mu$ and an
$SO(3)$-action, namely if $\Psi\in SO(3)$ then $\mu\Psi\eta$ is a
contact form associated with a metric in the conformal class $[g]$
on $H$. A qc manifold $(M, \bar g,\mathbb{Q} )$
is called qc conformal
to $(M, g,\mathbb{Q} )$ if $\bar g\in [g]$. In that case, if $\bar\eta$ is a
corresponding associated $1$-form with complex structures $\bar I_s$,
$s=1,2,3,$ we have $\bar\eta\ =\ \mu\, \Psi\,\eta$ for some $\Psi\in
SO(3)$ with smooth functions as entries and a positive function
$\mu$. In particular, starting with a qc manifold $(M, \eta)$ and
defining $\bar\eta\ =\ \mu\, \eta$ we obtain a qc manifold $(M,
\bar\eta)$ qc conformal to the original one.

If the first Pontryagin class of $M$ vanishes then the 2-sphere bundle of
$\mathbb{R}^3$-valued 1-forms is trivial \cite{AK}, i.e. there is a
globally defined form $\eta$ that anihilates $H$, we denote the
corresponding qc manifold $(M,\eta)$. In this case the 2-sphere of
associated almost complex structures is also globally defined on
$H$.

On a qc manifold with a fixed metric $g$ on $H$ there exists a
canonical connection defined in \cite{Biq1} when the dimension $(4n+3)>7$,
and in \cite{D} for the 7-dimensional case. We have
\begin{thrm}\label{biqcon}.\cite{Biq1} {Let $(M, g,\mathbb{Q})$ be a qc
manifold} of dimension $4n+3>7$ and a fixed metric $g$ on $H$ in
the conformal class $[g]$. Then there exists a unique connection
$\nabla$ with
torsion $T$ on $M^{4n+3}$ and a unique supplementary subspace $V$ to $H$ in
$TM$, such that:
\begin{enumerate}
\item[i)]
$\nabla$ preserves the decomposition $H\oplus V$ and the $Sp(n)Sp(1)$ structure on $H$,
i.e. $\nabla g=0,  \nabla\sigma \in\Gamma(\mathbb Q)$ for a section
$\sigma\in\Gamma(\mathbb Q)$, and its torsion on $H$ is given by $T(X,Y)=-[X,Y]_{|V}$;
\item[ii)] for $\xi\in V$, the endomorphism $T(\xi,.)_{|H}$ of $H$ lies in
$(sp(n)\oplus sp(1))^{\bot}\subset gl(4n)$;
\item[iii)]
the connection on $V$ is induced by the natural identification $\varphi$ of
$V$ with the subspace $sp(1)$ of the endomorphisms of $H$, i.e.
$\nabla\varphi=0$.
\end{enumerate}
\end{thrm}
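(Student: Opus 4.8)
The statement is local, and at every point it unwinds to linear algebra on the model $H_x\cong\mathbb{H}^n$ together with a Koszul-type identity; following \cite{Biq1}, I would organize the argument in two stages: first pin down the supplementary subspace $V$ by producing the ``Reeb'' vector fields attached to $\eta$, then, with the splitting $TM=H\oplus V$ available, construct $\nabla$ block by block and show that i)--iii) leave exactly one choice.

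\textbf{Stage 1: the vertical space $V$.} I would first prove that there is a unique triple $\xi_1,\xi_2,\xi_3$ of vector fields with $\eta_i(\xi_j)=\delta_{ij}$ and
\[
d\eta_i(\xi_i,\cdot)|_H=0,\qquad d\eta_i(\xi_j,\cdot)|_H=-\,d\eta_j(\xi_i,\cdot)|_H\quad(i\neq j),
\]
and then set $V:=\mathrm{span}\{\xi_1,\xi_2,\xi_3\}$. The key point is that changing $\xi_i$ to $\xi_i+Z$ with $Z\in H$ shifts $d\eta_j(\xi_i,\cdot)|_H$ by $2g(I_jZ,\cdot)$, so the displayed conditions become an inhomogeneous linear system for the horizontal components of the $\xi_i$; invoking the compatibility $d\eta_s(X,Y)=2g(I_sX,Y)$ and the quaternionic identities of Convention \ref{conven}, one checks this system has a unique solution precisely when $\dim H=4n\geq 8$. (For $n=1$ the relevant linear map degenerates — this is exactly why the $7$-dimensional case is excluded here and is treated separately in \cite{D}.) One then verifies that $V$ does not depend on the local representative $\eta$ and that the map $\varphi$ identifying $V$ with $sp(1)\subset gl(4n)$ — sending $\xi_i$ to the element acting on $H$ through $I_i$ — is well defined, so that conditions ii) and iii) even make sense.

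\textbf{Stage 2: the connection $\nabla$.} On the block $H\times H\to H$, any connection $D$ that is metric and satisfies $D_XY-D_YX=[X,Y]_{|H}$ — which is the horizontal part of the torsion condition in i), the vertical part $T(X,Y)_{|V}=-[X,Y]_{|V}$ being automatic once $\nabla$ preserves $H$ — is unique, because the difference of two such $D$'s is a tensor lying in the kernel of $\mathrm{Hom}(H,so(4n))\to\mathrm{Hom}(\Lambda^2H,H)$, i.e. in the first prolongation $so(4n)^{(1)}=0$. Such a $D$ exists by the usual Koszul formula with $[\cdot,\cdot]_{|H}$ in place of the Lie bracket, and this is the one spot where the qc axiom does essential work: using $d\eta_s(X,Y)=2g(I_sX,Y)$ — and that $d\eta_s$ is closed, which links the horizontal covariant derivatives of the three $I_s$ — one checks that $D$ automatically preserves $\mathbb{Q}$, hence is $sp(n)\oplus sp(1)$-valued, so it is the horizontal part of $\nabla$. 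For the mixed block, with $\xi\in V$ write $\nabla_\xi X=[\xi,X]_{|H}+\Phi_\xi X$; the torsion tensor $\Phi_\xi=T(\xi,\cdot)_{|H}$ is forced by $\nabla g=0$ to have $g$-symmetric part $\tfrac12(\mathcal L_\xi g)|_H$, by $\nabla\mathbb{Q}\subset\mathbb{Q}$ to satisfy three further linear conditions, and by ii) to lie in the $Sp(n)Sp(1)$-orthocomplement inside $gl(4n)$; a short computation shows these requirements are consistent and determine $\Phi_\xi$ uniquely. Finally $\nabla$ on $V$, in every direction, is fixed by iii): since $\varphi$ identifies $V$ with the rank-three subbundle $\mathbb{Q}\subset\mathrm{End}(H)$ on which $\nabla$ already acts, the equation $\nabla\varphi=0$ reads $\nabla_Z\xi=\varphi^{-1}\bigl(\nabla_Z(\varphi\xi)\bigr)$, and one checks the right-hand side indeed stays in $\mathbb{Q}$ (because the horizontal and mixed blocks were built to preserve $\mathbb{Q}$) and that the resulting $\nabla$ preserves $g$ and the decomposition $H\oplus V$. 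Assembling the four blocks gives existence; the blockwise uniqueness statements give uniqueness.

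\textbf{Main obstacle.} The real content sits in two places. First, the non-degeneracy of the linear system in Stage 1 — i.e. that $\eta$ alone determines $V$ — which genuinely fails at $n=1$; this is the whole reason the hypothesis is $\dim M>7$. Second, in Stage 2, the interlocking of the prescribed torsion with the $Sp(n)Sp(1)$-reduction: uniqueness rests on the vanishing first prolongation $so(4n)^{(1)}=0$, while existence rests on the algebraic Bianchi-type consequence of $d\eta_s(X,Y)=2g(I_sX,Y)$ that makes the Koszul connection of $g|_H$ preserve $\mathbb{Q}$ in horizontal directions and makes the defining conditions for $\Phi_\xi$ compatible. Everything else is bookkeeping with the orthogonal decomposition $gl(4n)=\bigl(sp(n)\oplus sp(1)\bigr)\oplus\bigl(sp(n)\oplus sp(1)\bigr)^{\perp}$ and the identities of Convention \ref{conven}.
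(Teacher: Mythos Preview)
The paper does not supply its own proof of Theorem~\ref{biqcon}: it is quoted verbatim from Biquard's monograph \cite{Biq1} (note the ``\cite{Biq1}'' immediately after the theorem heading) and is used purely as background. So there is no in-paper argument to compare your proposal against.

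That said, your outline is faithful to how Biquard actually builds the connection in \cite{Biq1}: first isolate the Reeb fields and hence $V$ by the linear system you wrote (with the $n\geq 2$ non-degeneracy being the point where the hypothesis $\dim>7$ enters), then assemble $\nabla$ block by block using a Koszul identity on $H$, the prescribed torsion endomorphism $T_{\xi}\in(sp(n)\oplus sp(1))^{\perp}$ in the mixed directions, and the identification $\varphi$ for the $V$-block. The one step you pass over quickly---that the horizontal Koszul connection built only from $g|_H$ and $[\,\cdot\,,\cdot\,]_{|H}$ already lands in $sp(n)\oplus sp(1)$---is genuine work in \cite{Biq1} (it is where the integrability of $d\eta_s|_H=2\omega_s$ is really used); if you were writing this out in full you would want to make that computation explicit rather than assert it. Otherwise the strategy is the standard one.
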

In ii), the inner product $<,>$ of $End(H)$ is given by $<A,B> = {
\sum_{i=1}^{4n} g(A(e_i),B(e_i)),}$ for $A, B \in End(H)$.

We shall call the above connection \emph{the Biquard connection}.
Biquard \cite{Biq1} also described the supplementary subspace $V$, namely
$V$ is (locally) generated by vector fields $\{\xi_1,\xi_2,\xi_3\}$,
such that
\begin{equation}  \label{bi1}
\begin{aligned} \eta_s(\xi_k)=\delta_{sk}, \qquad (\xi_s\lrcorner
d\eta_s)_{|H}=0,\\ (\xi_s\lrcorner d\eta_k)_{|H}=-(\xi_k\lrcorner
d\eta_s)_{|H}, \end{aligned}
\end{equation}
where $\lrcorner$ denotes the interior multiplication. The vector
fields $\xi_1,\xi_2,\xi_3$ are called Reeb
vector fields.

If the dimension of $M$ is seven, there might be no vector fields
satisfying \eqref{bi1}.  Duchemin shows in \cite{D} that if we
assume, in addition, the existence of Reeb vector fields as in
\eqref{bi1}, then Theorem~\ref{biqcon}
holds. Henceforth, by a qc structure in dimension $7$ we shall
mean a qc structure satisfying \eqref{bi1}.

Notice that equations \eqref{bi1} are invariant under the natural
$SO(3)$
action. Using the triple of Reeb vector fields we extend $g$ to a metric on
$M$ by requiring
$span\{\xi_1,\xi_2,\xi_3\}=V\perp H \text{ and }
g(\xi_s,\xi_k)=\delta_{sk}.
$
\hspace{2mm} \noindent The extended metric does not depend on the action of
$SO(3)$ on $V$, but it changes in an obvious manner if $\eta$ is
multiplied by a conformal factor. Clearly, the Biquard connection
preserves the extended metric on $TM, \nabla g=0$.
Since the Biquard connection is metric it is connected with the Levi-Civita connection
$\nabla^g$ of the metric $g$ by the general formula
\begin{equation}  \label{lcbi}
g(\nabla_AB,C)=g(\nabla^g_AB,C)+\frac12\Big[
g(T(A,B),C)-g(T(B,C),A)+g(T(C,A),B)\Big].
\end{equation}

The covariant derivative of the qc structure with respect to the
Biquard connection and the covariant derivative of the distribution
$V $ are given by $
\nabla I_i=-\alpha_j\otimes I_k+\alpha_k\otimes I_j,\quad
\nabla\xi_i=-\alpha_j\otimes\xi_k+\alpha_k\otimes\xi_j.$
The $sp(1)$-connection 1-forms $\alpha_s$ on $H$ are expressed in
\cite{Biq1} by
\begin{gather}  \label{coneforms}
\alpha_i(X)=d\eta_k(\xi_j,X)=-d\eta_j(\xi_k,X), \quad X\in H, \quad
\xi_i\in V,
\end{gather}
while the $sp(1)$-connection 1-forms $\alpha_s$ on the vertical
space $V$ are calculated in \cite{IMV}
\begin{gather}  \label{coneform1}
\alpha_i(\xi_s)\ =\ d\eta_s(\xi_j,\xi_k) -\
\delta_{is}\left(\frac{S}2\ +\ \frac12\,\left(\,
d\eta_1(\xi_2,\xi_3)\ +\ d\eta_2(\xi_3,\xi_1)\ + \
d\eta_3(\xi_1,\xi_2)\right)\right),
\end{gather}
where  $S$ is the \emph{normalized} qc scalar curvature defined
below in \eqref{qscs}. The vanishing of the $sp(1)$-connection
$1$-forms on $H$ implies the vanishing of the torsion endomorphism of
the Biquard connection (see \cite{IMV}).

The fundamental 2-forms $\omega_i, i=1,2,3$ \cite{Biq1}  are defined
by
$2\omega_{i|H}\ =\ \, d\eta_{i|H},\quad \xi\lrcorner\omega_i=0,\quad
\xi\in V.
$
The properties of the Biquard connection are encoded in the
properties of
the torsion
endomorphism $T_{\xi}=T(\xi,\cdot) : H\rightarrow H, \quad \xi\in V$. Decomposing
the endomorphism $T_{\xi}\in(sp(n)+sp(1))^{\perp}$
into its symmetric part $T^0_{\xi}$ and skew-symmetric part
$b_{\xi}, T_{\xi}=T^0_{\xi} + b_{\xi} $, O. Biquard shows in
\cite{Biq1} that the torsion $T_{\xi}$ is completely trace-free,
$tr\, T_{\xi}=tr\, T_{\xi}\circ
I_s=0$, its symmetric part has the properties
$T^0_{\xi_i}I_i=-I_iT^0_{\xi_i}\quad
I_2(T^0_{\xi_2})^{+--}=I_1(T^0_{\xi_1})^{-+-},\quad
I_3(T^0_{\xi_3})^{-+-}=I_2(T^0_{\xi_2})^{--+},\quad
I_1(T^0_{\xi_1})^{--+}=I_3(T^0_{\xi_3})^{+--} $, where
where the upperscript $+++$ means commuting  with all three $I_i$, $+--$
indicates commuting with $I_1$ and anti-commuting with the other two and etc.
The skew-symmetric part can be represented as $b_{\xi_i}=I_iu$, where
$u$ is a traceless symmetric (1,1)-tensor on $H$ which commutes with
$I_1,I_2,I_3$. If $n=1$ then the tensor $u$ vanishes identically,
$u=0$ and the torsion is a symmetric tensor, $T_{\xi}=T^0_{\xi}$.

Any 3-Sasakian manifold has zero torsion endomorphism, and
 the converse is true if in addition the qc scalar curvature (see
\eqref{qscs}) is a positive constant \cite{IMV}. We remind that a
$(4n+3)$-dimensional  Riemannian manifold $(M,g)$ is
called 3-Sasakian if the cone
metric $g_c=t^2g+dt^2$ on $C=M\times \mathbb{R}^+$ is a
hyper K\"ahler metric, namely, it has holonomy contained
in $Sp(n+1)$ \cite{BGN}. A 3-Sasakian manifold of dimension
$(4n+3)$ is Einstein with positive Riemannian scalar curvature
$(4n+2)(4n+3)$ \cite{Kas} and if complete it is compact with a
finite fundamental group, (see \cite{BG} for a nice overview of
3-Sasakian spaces).

\subsection{Torsion and curvature}

Let $R=[\nabla,\nabla]-\nabla_{[\ ,\ ]}$ be the curvature tensor of
$\nabla$ and the dimension is $4n+3$. We denote the curvature tensor
of type (0,4) by the same letter, $R(A,B,C,D):=g(R(A,B)C,D)$,
$A,B,C,D \in \Gamma(TM)$. The Ricci $2$-forms  and the normalized
scalar curvature of the Biquard connection, called \emph{qc-Ricci
forms} $\rho_s$ and \emph{normalized qc-scalar curvature} $S$, respectively, are
defined by
\begin{equation}  \label{qscs}
4n\rho_s(A,B)=R(A,B,e_a,I_se_a), \quad 8n(n+2)S=R(e_b,e_a,e_a,e_b).
\end{equation}
The $sp(1)$-part of $R$ is determined by the Ricci 2-forms and the
connection 1-forms by
\begin{equation}  \label{sp1curv}
R(A,B,\xi_i,\xi_j)=2\rho_k(A,B)=(d\alpha_k+\alpha_i\wedge\alpha_j)(A,B),
\qquad A,B \in \Gamma(TM).
\end{equation}
The two $Sp(n)Sp(1)$-invariant trace-free symmetric 2-tensors
$T^0(X,Y)=
g((T_{\xi_1}^{0}I_1+T_{\xi_2}^{0}I_2+T_{ \xi_3}^{0}I_3)X,Y)$, $U(X,Y)
=g(uX,Y)$ on $H$, introduced in \cite{IMV}, have the properties:
\begin{equation}  \label{propt}
\begin{aligned} T^0(X,Y)+T^0(I_1X,I_1Y)+T^0(I_2X,I_2Y)+T^0(I_3X,I_3Y)=0, \\
U(X,Y)=U(I_1X,I_1Y)=U(I_2X,I_2Y)=U(I_3X,I_3Y). \end{aligned}
\end{equation}
In dimension seven $(n=1)$, the tensor $U$ vanishes identically,
$U=0$.

We shall need the following identity taken from
\cite[Proposition~2.3]{IV}
\begin{equation}  \label{need}
4g(T^0(\xi_s,I_sX),Y)=T^0(X,Y)-T^0(I_sX,I_sY)
\end{equation}
\begin{dfn}A qc structure is said to be qc Einstein if the horizontal qc-Ricci 2-forms are scalar multiple of the fundamental 2-forms,
$$\rho_s(X,Y)=\nu_s\omega_s(X,Y).$$
\end{dfn}
For a qc Einstein structure the functions $\nu_s$ are all equal and can be expressed as a constant multiple of the qc scalar curvature \cite{IMV}.

The horizontal Ricci 2-forms can be expressed in terms of the
torsion of the Biquard connection \cite{IMV} (see also
\cite{IMV1,IV}). We collect the necessary facts from
\cite[Theorem~1.3, Theorem~3.12, Corollary~3.14, Proposition~4.3 and
Proposition~4.4]{IMV} with slight modification presented in
\cite{IV}

\begin{thrm}\label{sixtyseven}
.\cite{IMV} On a $(4n+3)$-dimensional qc manifold $(M,\eta,\mathbb{Q})$ the next
formulas hold
\begin{equation} \label{sixtyfour}
\begin{aligned} \rho_l(X,I_lY) \ & =\
-\frac12\Bigl[T^0(X,Y)+T^0(I_lX,I_lY)\Bigr]-2U(X,Y)-Sg(X,Y),\\ T(\xi_{i},\xi_{j})& =-S\xi_{k}-[\xi_{i},\xi_{j}]_{H}, \qquad S\  =\
-g(T(\xi_1,\xi_2),\xi_3)
\\ g(T(\xi_i,\xi_j),X) &
=-\rho_k(I_iX,\xi_i)=-\rho_k(I_jX,\xi_j)=-g([\xi_i,\xi_j],X),\qquad
\rho_i(\xi_i,\xi_j)+\rho_k(\xi_k,\xi_j)=\frac12\xi_j(S);\\
\rho_{i}(X,\xi_{i})\ & =\ -\frac {X(S)}{4} \ +\ \frac
12\, \left
(-\rho_{i}(\xi_{j},I_{k}X)+\rho_{j}(\xi_{k},I_{i}X)+\rho_{k}(\xi_{i},I_{j}X)
\right). \end{aligned}
\end{equation}
For $n=1$ the above formula holds with $U=0$.

The qc Einstein condition
is equivalent to the vanishing of the torsion endomorphism of the
Biquard connection. In this case $S$ is constant and  the vertical distribution is integrale provided $n>1$.
\end{thrm}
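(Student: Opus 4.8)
The plan is to treat the final sentence of the theorem as three separate assertions: (i) the equivalence ``qc Einstein $\iff$ torsion endomorphism of the Biquard connection vanishes''; (ii) constancy of $S$ once the torsion vanishes; (iii) integrability of the vertical distribution $V$ when $n>1$. The only preliminary I need beyond what is displayed in the excerpt is the identity $\omega_s(X,I_sY)=g(X,Y)$ for $X,Y\in H$, which is immediate from $2\omega_{s|H}=d\eta_{s|H}$, $2g(I_sX,Y)=d\eta_s(X,Y)$ and $g(I_s\cdot,I_s\cdot)=g$; all the rest is algebra with the tensors $T^0,U$ and the formulas \eqref{propt}, \eqref{need}, \eqref{sixtyfour}, \eqref{sp1curv}.

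For (i), I rewrite the qc Einstein condition $\rho_s(X,Y)=\nu_s\omega_s(X,Y)$ in the equivalent form $\rho_s(X,I_sY)=\nu_s\, g(X,Y)$ and compare it with the first line of \eqref{sixtyfour}. This says precisely that for each $s$ the symmetric tensor $\tfrac12\big[T^0(X,Y)+T^0(I_sX,I_sY)\big]+2U(X,Y)$ is a (function) multiple of $g$. Summing over $s=1,2,3$ and using $\sum_s T^0(I_sX,I_sY)=-T^0(X,Y)$ and $U(I_sX,I_sY)=U(X,Y)$ from \eqref{propt}, I get $T^0+6U=\lambda g$; taking the horizontal trace and using that $T^0$ and $U$ are trace-free gives $\lambda=0$, hence $T^0=-6U$. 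Substituting this back into the first identity of \eqref{propt} yields $24\,U=0$, so $U=0$ and then $T^0=0$. By \eqref{need}, $T^0\equiv0$ forces $T^0_{\xi_s}=0$ for every $s$, while $U=0$ forces $u=0$ and hence $b_{\xi_s}=I_su=0$; therefore $T_\xi=T^0_\xi+b_\xi=0$ for all $\xi\in V$. Conversely, if the torsion endomorphism vanishes then $T^0=U=0$ and the first line of \eqref{sixtyfour} collapses to $\rho_s(X,I_sY)=-S\,g(X,Y)$, i.e. $\rho_s=-S\,\omega_s$ on $H$, which is the qc Einstein condition with all $\nu_s=-S$. (When $n=1$ the tensor $U$ vanishes identically and the same argument runs verbatim with $U$ omitted.)

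For (ii) and (iii) I assume the torsion vanishes. The missing ingredient is a closed form, in the zero-torsion case, for the mixed Ricci components $\rho_s(\xi_m,X)$, $X\in H$; these come from the first and second Bianchi identities for $\nabla$ and reduce, once $T^0=U=0$, to expressions linear in $X(S),(I_1X)(S),(I_2X)(S),(I_3X)(S)$ --- this is the content of \cite[Proposition~4.3, Proposition~4.4]{IMV}. Feeding them into the last line of \eqref{sixtyfour} and into the identity $g([\xi_i,\xi_j],X)=\rho_k(I_iX,\xi_i)$ from the third line produces a homogeneous linear system in those four quantities whose only solution, by a direct computation, is the trivial one as soon as $n>1$; this forces $X(S)=0$ for all $X\in H$ and, simultaneously, $\rho_k(\cdot,\xi_i)|_H=0$, whence $[\xi_i,\xi_j]_H=0$, i.e. $V$ is integrable. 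The constancy of $S$ in the vertical directions then follows from $dS|_H=0$ together with the relation $\rho_i(\xi_i,\xi_j)+\rho_k(\xi_k,\xi_j)=\tfrac12\xi_j(S)$ and the vertical curvature identities \eqref{sp1curv}, using that $T(\xi_i,\xi_j)=-S\xi_k-[\xi_i,\xi_j]_H$ with vanishing horizontal bracket.

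The step I expect to be the genuine obstacle is the vanishing of the above linear system in the horizontal derivatives of $S$: this is exactly where the hypothesis $n>1$ enters --- the relevant coefficient carries a factor that degenerates at $n=1$ --- and it is consistent with the existence, established later in the paper, of seven-dimensional qc Einstein structures with non-integrable vertical distribution. Making this precise requires the explicit zero-torsion Bianchi identities of \cite{IMV} rather than only the formulas reproduced in \eqref{sixtyfour}; everything else is bookkeeping with the algebraic identities already on hand.
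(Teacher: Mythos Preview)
The paper does not actually prove Theorem~\ref{sixtyseven}: it is stated as a collection of results quoted from \cite{IMV} (and \cite{IV}), with no argument given in the present text. So there is no in-paper proof to compare against.

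That said, your treatment is sound. For (i), your algebra with \eqref{sixtyfour}, \eqref{propt}, and \eqref{need} is correct: the sum over $s$ gives $T^0+6U$ proportional to $g$, trace-freeness kills the proportionality constant, and then the orthogonality of the $[-1]$ and $[3]$ components (equivalently, your substitution back into \eqref{propt}) forces $T^0=U=0$; \eqref{need} then recovers $T^0_{\xi_s}=0$ and $u=0$ gives $b_{\xi_s}=0$. The converse is immediate. This is exactly the argument in \cite{IMV}, so you have reproduced rather than replaced it.

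For (ii) and (iii) you correctly locate the crux: one needs the explicit zero-torsion Bianchi relations expressing $\rho_s(\xi_m,X)$ in terms of horizontal derivatives of $S$ (Propositions~4.3--4.4 of \cite{IMV}), after which the last two lines of \eqref{sixtyfour} yield an overdetermined linear system in $X(S),(I_1X)(S),(I_2X)(S),(I_3X)(S)$ whose determinant is nonzero precisely when $n>1$. You have not carried out this computation, only outlined it---which is no less than the present paper does, since it simply cites \cite{IMV} for the whole theorem. If you want a self-contained proof, the remaining honest work is to write down those Bianchi identities and check the rank of the resulting system; everything else in your outline is accurate.
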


\subsection{The qc conformal curvature}

The qc conformal curvature tensor $W^{qc}$, introduced  in \cite{IV},
is the obstruction for a qc structure to be locally qc
conformal to the flat structure on the quaternionic Heisenberg group $
\boldsymbol{G\,(\mathbb{H})}$. Denote $L_0=\frac12T^0+U$, the tensor $W^{qc}$ can be expressed by \cite{IV}
\begin{multline}  \label{qcwdef1}
W^{qc}(X,Y,Z,V) =
R(X,Y,Z,V)+ (g\owedge L_0))(X,Y,Z,V)+
\sum_{s=1}^3(\omega_s%
\owedge I_sL_0)(X,Y,Z,V) \\
-\frac12\sum_{s=1}^3\Bigl[\omega_s(X,Y)\Bigl\{T^0(Z,I_sV)-T^0(I_sZ,V)\Bigr\}
+ \omega_s(Z,V)\Bigl\{T^0(X,I_sY)-T^0(I_sX,Y)-4U(X,I_sY)\Bigr\}\Bigr] \\
+\frac{S}4\Big[(g\owedge g)(X,Y,Z,V)+\sum_{s=1}^3\Bigl((\omega_s\owedge
\omega_s)(X,Y,Z,V) +4\omega_s(X,Y)\omega_s(Z,V)\Bigr) \Big],
\end{multline}
where $I_s U\, (X,Y) = -U (X,I_s Y)$ and $\owedge$ is the
Kulkarni-Nomizu product of (0,2) tensors, for example,
\begin{multline*}
(\omega_s\owedge U)(X,Y,Z,V):=\omega_s(X,Z)U(Y,V)+
\omega_s(Y,V)U(X,Z)-\omega_s(Y,Z)U(X,V)-\omega_s(X,V)U(Y,Z).
\end{multline*}
The main result from \cite{IV} can be stated as follows

\begin{thrm}
\label{main1} .\cite{IV} A qc structure on a $(4n+3)$-dimensional
smooth manifold is locally qc conformal to the standard flat qc
structure on the quaternionic Heisenberg group
$\boldsymbol{G\,(\mathbb{H})}$ if and only if the qc conformal
curvature vanishes, $W^{qc}=0$. In this case, we call the qc
structure a qc conformally flat structure.
\end{thrm}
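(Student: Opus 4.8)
The plan is to establish the two implications separately, following the structure of a conformal-flatness classification. The "only if" direction is the routine one. First I would check that $W^{qc}$ is a genuine qc-conformal invariant: one must verify that under a qc-conformal change $\bar\eta=\mu\Psi\eta$ the curvature and torsion of the Biquard connection transform in such a way that the particular combination \eqref{qcwdef1} is unchanged (up to the obvious action of $SO(3)$ rotating the $\omega_s$). This is the heart of the forward direction and also the main bookkeeping obstacle: it requires the transformation formulas for $\nabla$, $R$, $T^0$, $U$ and $S$ under $\bar\eta=\mu\eta$, which in turn come from differentiating the defining relations \eqref{bi1}, \eqref{coneforms}, \eqref{coneform1} and feeding them through \eqref{lcbi}. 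Granting the invariance, one then computes $W^{qc}$ on the flat model $\boldsymbol{G\,(\mathbb{H})}$ directly: there $R\equiv 0$, $T^0=U=0$ and $S=0$ by \cite{IMV}, so every term in \eqref{qcwdef1} vanishes, hence $W^{qc}=0$ on $\boldsymbol{G\,(\mathbb{H})}$, and by invariance $W^{qc}=0$ on anything locally qc conformal to it.

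For the "if" direction, assume $W^{qc}=0$ on $M^{4n+3}$. The strategy is to produce, locally, a conformal factor $\mu$ (equivalently, a function $\phi$ with $\bar\eta=e^{2\phi}\eta$) for which the rescaled qc structure $\bar\eta$ has vanishing Biquard curvature $\bar R=0$; then Theorem~\ref{main1}'s target follows from the quoted fact in the Introduction that a qc structure with flat Biquard connection is locally $\boldsymbol{G\,(\mathbb{H})}$ (the uniqueness statement from \cite{IMV}, together with \cite{IV}: vanishing of the horizontal curvature already forces flatness). Concretely, one writes down the prospective "Schouten-type" tensor built from the qc-Ricci forms $\rho_s$, $T^0$, $U$, $S$ — essentially $L_0=\tfrac12 T^0+U$ together with the $S$-term appearing in \eqref{qcwdef1} — and notes that $W^{qc}=0$ says precisely that $R$ equals the algebraic expression in these lower-order tensors. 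The PDE governing $\phi$ is then the qc analogue of the Schouten/Cotton equation: its integrability condition is exactly the vanishing of $W^{qc}$ (the "Weyl part"), while the remaining "Cotton part" of the curvature is forced to vanish as a consequence of the second Bianchi identity for $\nabla$ once the Weyl part is zero — this is the standard mechanism in conformal geometry and its CR/qc counterparts, and it is where dimension enters only through $4n+3\ge 7$.

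The key steps in order: (1) record the qc-conformal transformation laws for $\nabla^{\bar\eta}$, $\bar R$, $\bar T^0$, $\bar U$, $\bar S$ under $\bar\eta=e^{2\phi}\eta$; (2) use these to prove $W^{qc}$ is pointwise qc-conformally invariant; (3) evaluate $W^{qc}=0$ on $\boldsymbol{G\,(\mathbb{H})}$, giving the "only if"; (4) for the converse, set up the Schouten tensor and the first-order system for $\phi$ whose solvability is equivalent to $W^{qc}=0$ modulo a Cotton-type obstruction; (5) show the Cotton-type tensor is determined by $\nabla W^{qc}$ via the differential Bianchi identity, so that $W^{qc}=0$ kills it too; (6) solve for $\phi$ locally (Frobenius/Poincaré lemma once integrability holds), obtaining $\bar\eta$ with $\bar R=0$; (7) invoke the flat-curvature uniqueness of $\boldsymbol{G\,(\mathbb{H})}$ to conclude. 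The main obstacle is step (2) — the invariance of the quite intricate expression \eqref{qcwdef1} under rescaling — closely followed by the curvature-decomposition argument in step (5); both are computation-heavy but structurally parallel to the Riemannian Weyl tensor and the CR Chern–Moser tensor, and this is precisely the content worked out in \cite{IV}.
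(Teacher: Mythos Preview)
The paper does not prove Theorem~\ref{main1} at all: it is quoted verbatim from \cite{IV} as background (``The main result from \cite{IV} can be stated as follows'') and is used only as a black box in the proofs of Theorems~\ref{m1}, \ref{m22} and~\ref{m3}. So there is no proof here to compare your proposal against.

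Your outline is a reasonable high-level sketch of the argument one expects in \cite{IV}, and you yourself note in the last line that the computations are ``precisely the content worked out in \cite{IV}.'' That is the correct attitude: in the context of the present paper the theorem is a cited tool, not something to be re-proved, and any attempt at a self-contained proof would amount to reproducing a substantial portion of \cite{IV}.
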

A qc conformally flat structure is also locally qc conformal to
the standard 3-Sasaki sphere due to the local qc conformal equivalence of
the standard 3-Sasakian structure on the $4n+3$-dimensional sphere
and the quaternionic Heisenberg group \cite{IMV,IV}.

\section{Explicit examples of quaternionic contact structures }

In this section we give explicit examples of qc structures in
dimension seven satisfying the compatibility conditions \eqref{bi1}.
The first example has zero torsion and is locally qc conformal to
the quaternionic Heisenberg group. The second example has zero
torsion while the third is with non-vanishing torsion, and both are
not locally qc conformal to the
quaternionic Heisenberg group. We remind that the zero torsion qc structures are precisely the qc Einstein structures, cf. Theorem \ref{sixtyseven}.

\begin{rmrk}\label{r:CBoyer}
We note explicitly that the vanishing of the torsion endomorphism
implies that, locally,  the structure is homothetic to a 3-Sasakian
structure if the qc scalar curvature is positive. In the seven
dimensional examples below the qc scalar curvature is a negative
constant. In that respect, as pointed by Charles Boyer, there are no
compact invariant with respect to translations 3-Sasakian Lie groups
of dimension seven.
\end{rmrk}

\subsection{Example 0: The quaternionic Heisenberg group ${G\,(\mathbb{H})}$ - the Biquard-flat qc structure}

As a manifold $\boldsymbol{G\,(\mathbb{H})} \ =\mathbb{H}
^n\times\text {Im}\, \mathbb{H}$, while the group multiplication is
given by $( q^{\prime }, \omega^{\prime })\ =\ (q_o,
\omega_o)\circ(q, \omega)\ =\
(q_o\ +\ q, \omega\ +\ \omega_o\ + \ 2\ \text {Im}\ q_o\, \bar q)$, where $
q,\ q_o\in\mathbb{H}^n$ and $\omega, \omega_o\in \text {Im}\,
\mathbb{H}$. The standard flat qc structure is defined by the
left-invariant qc form $\tilde\Theta\ =\ (\tilde\Theta_1,\
\tilde\Theta_2, \ \tilde\Theta_3)\ =\ \frac 12\ (d\omega \ - \
q^{\prime
}\cdot d\bar q^{\prime }\ + \ dq^{\prime }\, \cdot\bar q^{\prime })$, where
$.$ denotes the quaternion multiplication. As a Lie group it can be
characterized by the following structure equations. Denote by ${e^a,
1 \leq a\leq (4n+3)}$ the basis of the left invariant 1-forms, {and
by $e^{ij}$ the wedge product $e^i \wedge e^j$.} The
$(4n+3)$-dimensional quaternionic Heisenberg Lie algebra is the
2-step nilpotent Lie algebra defined by:
\begin{equation}  \label{4n+3heis}
\begin{aligned}
& de^a =0, \qquad 1\leq a \leq 4n, \\
& d\eta_1=de^{4n+1}= 2( e^{12} + e^{34}+\cdots + e^{(4n-3)(4n-2)}+
e^{(4n-1)4n} )= 2\omega_1,
\\
& d\eta_2=de^{4n+2}= 2 (e^{13} +e^{42}+\cdots + e^{(4n-3)(4n-1)}+
e^{4n(4n-2)})= 2\omega_2,
\\
& d\eta_3 = de^{4n+3}= 2 (e^{14} + e^{23}+\cdots + e^{(4n-3)4n}+
e^{(4n-2)(4n-1)})= 2\omega_3.
\end{aligned}
\end{equation}
The Biquard connection
coincides with the flat left-invariant  connection on $\QH$.
This flat qc structure on the quaternionic Heisenberg group $\boldsymbol{G\,(\mathbb{H})}$ is (locally) the unique qc structure with flat Biquard connection \cite{IMV,IV1}. By a rotation of the 1-forms defining the horizontal space of $\boldsymbol{G\,(\mathbb{H})}$ we obtain an equivalent qc-structure (with the same Biquard connection). It is possible to introduce a different not two step nilpotent group structure on  $\mathbb{H}
^n\times\text {Im}\, \mathbb{H}$ with respect to which the rotated forms are left invariant (but not parallel!). Following is an explicit description of this construction in dimension seven.

Consider the seven dimensional quaternionic Heisenberg group. Since $e^4$ is closed we can write $e^4=dx_4$, where $x_4$ is a global function on the manifold $\mathbb{H}\times\text {Im}\, \mathbb{H}$.
Now we can use this function to define a non-left-invariant qc structure on this manifold as follows.
For each $c\in \mathbb{R}$, let
\begin{equation}  \label{ex01rotation}
\begin{aligned}
& \gamma^1= e^1, \quad \gamma^2= \sin(-cx_4)\, e^2 + \cos(-cx_4)\, e^3,\quad \gamma^3= -\cos(-cx_4)\, e^2 + \sin(-cx_4)\, e^3,\quad \gamma^4= e^4,\\
& \gamma^5= \sin(-cx_4)\, e^5 + \cos(-cx_4)\, e^6,\quad \gamma^6= -\cos(-cx_4)\, e^5 + \sin(-cx_4)\, e^6,\quad \gamma^7= e^7.
\end{aligned}
\end{equation}
A direct calculation shows that for  $c\not=0$ the forms $\{ \gamma^l,\ 1 \leq l\leq 7\}$ define a
unique Lie algebra $\mathfrak {l_0}$ with the following structure equations
\begin{equation}  \label{ex01}
\begin{aligned}
&d\gamma^1=0,\quad d\gamma^2=-c\gamma^{34},\quad d\gamma^3=c\gamma^{24},\quad d\gamma^4=0,\\
&d\gamma^5=2\gamma^{12}+2\gamma^{34}+c\gamma^{46},\quad
d\gamma^6=2\gamma^{13}+2\gamma^{42}-c\gamma^{45},\quad
d\gamma^7=2\gamma^{14}+2\gamma^{23}.
\end{aligned}
\end{equation}
{In particular, $l_0$ is an indecomposable solvable Lie algebra.}
Let $e_l, 1 \leq l\leq 7$ be the left invariant vector fields dual
to the 1-forms ${\gamma^l, 1 \leq l\leq 7}$. The
(global) flat qc structure on $\mathbb{H}\times\text {Im}\, \mathbb{H}$ can also be described as follows
$\eta_1=\gamma^5, \quad \eta_2=\gamma^6, \quad \eta_3=\gamma^7, \quad
H=span\{\gamma^1,\dots, \gamma^4\}$, $\omega_1=\gamma^{12}+\gamma^{34}, \quad
\omega_2=\gamma^{13}+\gamma^{42}, \quad \omega_3=\gamma^{14}+\gamma^{23}$.
It is straightforward to check from \eqref{ex01} that the vector
fields
$\xi_1=e_5$, $\xi_2=e_6$, $\xi_3=e_7 $
satisfy the Duchemin compatibility conditions \eqref{bi1} and
therefore the Biquard connection exists and $\xi_s$ are the Reeb
vector fields.

Let $(L_0,\eta,\mathbb{Q})$ be the simply connected connected Lie
group with Lie algebra $\mathfrak l_0$ equipped with the left invariant qc structure
$(\eta,\mathbb{Q})$ defined above. Then, as a consequence of the above construction,
the torsion endomorphism and the curvature of the Biquard connection
are identically zero  but the basis $\gamma_1,\dots,\gamma_7$ is not parallel.  The $Sp(1)$-connection 1-forms  in the basis $\gamma^1, \dots, \gamma^7$ are given by
$
\alpha_1=0, \quad \alpha_2=0, \quad \alpha_3=c\gamma^4.
$

\subsection{Example 1: zero torsion qc-flat structure.}
Denote $\{\tilde e^l,\ 1 \leq l\leq 7\}$ the basis of the left
invariant 1-forms and consider the simply connected connected Lie group $L_1$ with
{indecomposable} Lie algebra $\mathfrak {l_1}$ defined by the following equations
\begin{equation}  \label{ex11}
\begin{aligned}
&de^1=0,\quad de^2=-e^{12}-2e^{34}-\frac12e^{37}+\frac12e^{46},\\
&de^3=-e^{13}+2e^{24}+\frac12e^{27}-\frac12e^{45},\quad
de^4=-e^{14}-2e^{23}-\frac12e^{26}+\frac12e^{35},\\
&de^5=2e^{12}+2e^{34}-\frac12e^{67},\quad
de^6=2e^{13}+2e^{42}+\frac12e^{57},\quad
de^7=2e^{14}+2e^{23}-\frac12e^{56}.
\end{aligned}
\end{equation}
Let $e_l, 1 \leq l\leq 7$ be the left invariant vector field dual
to the 1-forms ${e^l, 1 \leq l\leq 7}$, respectively. A global qc structure on $L_1$ is defined by
\begin{equation}\label{qc1}
\begin{aligned} &\eta_1=e^5, \quad \eta_2=e^6, \quad \eta_3=e^7, \quad
H=span\{e^1,\dots, e^4\},\\ &\omega_1=e^{12}+e^{34}, \quad
\omega_2=e^{13}+e^{42}, \quad \omega_3=e^{14}+e^{23}. \end{aligned}
\end{equation}
It is straightforward to check from \eqref{ex11} that the vector
fields
$\xi_1=e_5$, $\xi_2=e_6$, $\xi_3=e_7 $
satisfy the Duchemin compatibility conditions \eqref{bi1} and
therefore the Biquard connection exists and $\xi_s$ are the Reeb
vector fields.

\begin{thrm}
\label{m1} Let $(L_1,\eta,\mathbb{Q})$ be the simply connected Lie
group
with Lie algebra $\mathfrak {l_1}$ equipped with the left invariant qc structure
$(\eta,\mathbb{Q})$ defined above. Then

\begin{itemize}
\item[a)] The qc structure is qc Einstein 
the normalized qc scalar curvature is a negative constant, $S=-\frac12$.

\item[b)] The qc conformal curvature is zero, $W^{qc}=0$, and therefore
$(L_1,\eta,\mathbb{Q})$ is locally qc conformally flat.
\end{itemize}
\end{thrm}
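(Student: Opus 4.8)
The plan is to read the Biquard connection of $(L_1,\eta,\mathbb Q)$ directly off the structure equations \eqref{ex11}, working in the left invariant frame $\{e_l\}_{l=1}^{7}$; all tensors involved are left invariant, so both assertions reduce to a finite check of identities among the structure constants.

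First I would record the brackets $[e_a,e_b]=-\sum_l de^l(e_a,e_b)\,e_l$ from \eqref{ex11}. Since $\xi_s:=e_{4+s}$ satisfy \eqref{bi1} (as already noted before the statement), the Biquard connection $\nabla$ exists, and I would compute its $sp(1)$-connection $1$-forms from \eqref{coneforms}--\eqref{coneform1}. The key observation is immediate from \eqref{ex11}: every term in $d\eta_s=de^{4+s}$ is the wedge of two horizontal or of two vertical $1$-forms, so $d\eta_s$ annihilates every pair $(\xi_j,X)$ with $X\in H$; hence $\alpha_s|_H=0$. By the remark following \eqref{coneform1} (see \cite{IMV}) this forces the torsion endomorphism of $\nabla$ to vanish, so by Theorem~\ref{sixtyseven} the qc structure is qc Einstein and $S$ is a global constant. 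To evaluate $S$ I would use that, with the torsion endomorphism zero and $n=1$ (so $T^0$ and $U$ vanish, the latter since $n=1$), \eqref{sixtyfour} gives $\rho_l(X,I_lY)=-S\,g(X,Y)$; taking $l=3$, $X=Y=e_1$ (so $I_3e_1=e_4$) yields $\rho_3(e_1,e_4)=-S$. On the other hand \eqref{sp1curv} gives $2\rho_3(e_1,e_4)=(d\alpha_3+\alpha_1\wedge\alpha_2)(e_1,e_4)=-\alpha_3([e_1,e_4])$ because $\alpha_1|_H=\alpha_2|_H=0$, and with $[e_1,e_4]=e_4-2\xi_3$ this equals $2\alpha_3(\xi_3)$; since $\alpha_3(\xi_3)=\tfrac14-\tfrac S2$ by \eqref{coneform1}, we get $\tfrac14-\tfrac S2=-S$, i.e. $S=-\tfrac12$. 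This proves a).

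For b), qc Einstein together with $n=1$ gives $T^0=0$, $U=0$, hence $L_0=\tfrac12T^0+U=0$; therefore every term of \eqref{qcwdef1} carrying $L_0$, $T^0$ or $U$ drops out, and on $H$
\begin{equation*}
W^{qc}(X,Y,Z,V)=R(X,Y,Z,V)+\tfrac S4\Big[(g\owedge g)(X,Y,Z,V)+\sum_{s=1}^{3}\big((\omega_s\owedge\omega_s)(X,Y,Z,V)+4\,\omega_s(X,Y)\omega_s(Z,V)\big)\Big].
\end{equation*}
It remains to compute the horizontal curvature $R(e_a,e_b,e_c,e_d)$, $1\le a,b,c,d\le4$. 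Since $T(X,Y)=-[X,Y]_{|V}$ is vertical and $T_{\xi_s}=0$, formula \eqref{lcbi} gives $g(\nabla_XY,Z)=g(\nabla^g_XY,Z)$ for horizontal $X,Y,Z$, while the vertical part of $\nabla_XY$ and the horizontal action of $\nabla_{\xi_s}$ are likewise expressed through the brackets via \eqref{lcbi}; and $\alpha_s|_H=0$ means $\nabla_X\xi_s=0$ and $\nabla_XI_s=0$ for $X\in H$. Thus $\nabla$ is completely explicit, one computes all components of $R|_H$, substitutes $S=-\tfrac12$, and verifies that the right-hand side above vanishes identically. Theorem~\ref{main1} then yields that $(L_1,\eta,\mathbb Q)$ is locally qc conformally flat.

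The main obstacle is this last curvature computation: though every ingredient is routine, one must track all horizontal components of $R$ --- including the ``Christoffel''-type contributions coming from the vertical part of $\nabla_XY$ and from the action of $\nabla_{\xi_s}$ on $H$ --- and check their exact cancellation against the constant-qc-curvature model tensor on the right-hand side. Exploiting the curvature symmetries (the first Bianchi identity and the $Sp(n)Sp(1)$-invariance of $R$ restricted to $H$) to reduce the number of independent components is what keeps the bookkeeping manageable.
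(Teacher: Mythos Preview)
Your proposal is correct and follows the paper's own argument: compute the $sp(1)$-connection forms from \eqref{coneforms}--\eqref{coneform1} (the paper obtains $\alpha_s=(\tfrac14-\tfrac S2)\eta_s$, equivalent to your $\alpha_s|_H=0$), deduce vanishing torsion endomorphism and $S=-\tfrac12$ via \eqref{sixtyfour}, then reduce \eqref{qcwdef1} to the constant-curvature form and compute $R|_H$ from the structure constants using \eqref{lcbi}, \eqref{lilc}, \eqref{torhor}; the paper finds $R(e_a,e_b,e_a,e_b)=1$ for $a\neq b$ and all other horizontal components zero, which cancels the model tensor exactly. One small correction to your outline: since the Biquard connection preserves the splitting $H\oplus V$ (Theorem~\ref{biqcon}\,(i)), $\nabla_XY\in H$ for $X,Y\in H$, so there is no ``vertical part of $\nabla_XY$'' to track---the only vertical contribution to $R(X,Y)Z$ enters through the $\nabla_{[X,Y]_V}Z$ term, which you also correctly identify.
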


\begin{proof}
We compute first the connection 1-forms and the horizontal Ricci forms of
the Biquard connection. The Lie algebra structure equations
\eqref{ex11} together with \eqref{coneforms}, \eqref{coneform1} and
\eqref{sp1curv} imply
\begin{equation}  \label{ex1conf1}
\alpha_s=(\frac14-\frac{S}2)\eta_s, \qquad
\rho_s(X,Y)=\frac12d\alpha_s(X,Y)=(\frac14-\frac{S}2)\omega_s(X,Y).
\end{equation}
Equation \eqref{ex1conf1} and \eqref{sixtyfour} allow us to conclude that the
torsion endomorphism is zero and the  normalized qc scalar $S=-\frac12$. Now,
Theorem \ref{sixtyseven} completes the proof of part a).

In view of Theorem~\ref{main1}, to prove part b) we have to show
$W^{qc}=0$. Since the torsion of the
Biquard connection vanishes and $S=-\frac12$, \eqref{qcwdef1} takes
the form
\begin{multline}  \label{qcwdef2}
W^{qc}(X,Y,Z,V)= R(X,Y,Z,V) \\
-\frac{1}8\Big[(g\owedge g)(X,Y,Z,V)+\sum_{s=1}^3\Bigl((\omega_s\owedge
\omega_s)(X,Y,Z,V) +4\omega_s(X,Y)\omega_s(Z,V)\Bigr) \Big].
\end{multline}
The Koszul formula expressing the Levi-Civita connection in terms of the metric in the case of left-invariant vector fields $A,B,C$ on a Lie group reads
\begin{equation}  \label{lilc}
g(\nabla^g_AB,C)=\frac12\Big[
g([A,B],C))-g([B,C],A)+g([C,A],B)\Big].
\end{equation}
By Theorem~\ref{biqcon} we have the formula
\begin{equation}  \label{torhor}
T(X,Y)=2\sum_{s=1}^3\omega_s(X,Y)\xi_s.
\end{equation}
Using \eqref{torhor}, \eqref{lilc}, \eqref{lcbi} and the structure equations
\eqref{ex11} we found  the non zero coefficients of the curvature tensor are
$R(e_a,e_b,e_a,e_b)=-R(e_a,e_b,e_b,e_a)=1$, $a,b=1,\dots,4$,
$a\not=b$. Now \eqref{qcwdef2} yields $\qquad
W^{qc}(e_a,e_b,e_c,e_d)=R(e_a,e_b,e_c,e_d)=0$, when there are three
different indices in $a,b,c,d$. For the indices repeated in pairs we
have
\begin{multline*}
W^{qc}(e_a,e_b,e_a,e_b)=R(e_a,e_b,e_a,e_b)-\frac{1}8(g\owedge
g)(e_a,e_b,e_a,e_b)- \\
\frac{1}8\Big[\sum_{s=1}^3\Bigl((\omega_s\owedge\omega_s)(e_a,e_b,e_a,e_b)
+4\omega_s(e_a,e_b)\omega_s(e_a,e_b)\Bigr)
\Big]=1-\frac28-\frac68=0
\end{multline*}
Then Theorem \ref{main1} completes the proof.
\end{proof}

The  Lie algebra of the group $L_1$ is a  semi-direct sum, $l_1=su(2)\oplus_\pi \mathfrak{a}_{4,5}$, of $su(2)$ and the four dimensional solvable Lie algebra $\mathfrak{a}_{4,5}$, \cite{NB}, given, respectively, by
\begin{equation*}
\begin{aligned}
& su(2): \qquad d f^5=-\frac 12  f^{67}, \ d f^6=-\frac 12  f^{75},\ d f^7=-\frac 12  f^{56},\\
& \mathfrak{a}_{4,5}: \qquad d\tilde e^1 =0,\quad d\tilde e^2 =-\tilde e^{12},\quad d\tilde e^3 =-\tilde e^{13}, \quad d\tilde e^4= -\tilde e^{14}.
\end{aligned}
\end{equation*}
The action $\pi$ of $su(2)$ on $a_{4,5}$ is the restriction to  $su(2)$ of $ad$ on $L_1$, i.e., with the notation $\pi(f_i)\tilde e_j\equiv[f_i, \tilde e_j]$ the action is
\begin{equation*}
\begin{aligned}
& [f_5, \tilde e_3]=\frac 12 \tilde e_4, \ [f_5, \tilde e_4]=-\frac 12 \tilde e_3\\
& [f_6, \tilde e_4]=\frac 12 \tilde e_2, \ [f_6, \tilde e_2]=-\frac 12 \tilde e_4\\
& [f_7, \tilde e_2]=\frac 12 \tilde e_3, \ [f_7, \tilde e_3]=-\frac 12 \tilde e_2,
\end{aligned}
\end{equation*}
where $f_i$ and $\tilde e_j$ are the dual vectors.
This decomposition can be seen easier in the basis
\begin{equation*}
 f^1=e^1, \ f^2 =e^2, \ f^3=e^3, \ f^4=e^4,\ f^5=2e^2+e^5, \ f^6=2e^3+e^6,\ f^7=2e^4+e^7,
\end{equation*}
which satisfy the structure equations
\begin{equation*}
\begin{aligned}
& df^1=0, \ df^2=-f^{12}-\frac 12 f^{37}+\frac 12 f^{46},\ df^3=-f^{13} + \frac 12 f^{27} - \frac 12 f^{45}, \ df^4=-f^{14} - \frac 12 f^{26} + \frac 12 f^{35}\\
& d f^5=-\frac 12  f^{67}, \ d f^6=-\frac 12  f^{75},\ d f^7=-\frac 12  f^{56}.
\end{aligned}
\end{equation*}

\subsection{Example 2: zero torsion qc-non-flat structure.}

Consider the simply connected connected Lie group $L_2$ with Lie algebra
defined by the equations:
\begin{equation}  \label{ex2}
\begin{aligned}
& de^1 = 0, \quad de^2 = -e^{12} + e^{34},\quad de^3 = -\frac12
e^{13},\quad de^4 =-\frac12 e^{14},\\
& de^5 = 2 e^{12} + 2 e^{34} + e^{37} - e^{46} +\frac14e^{67}, \quad
de^6 = 2 e^{13} - 2 e^{24} -\frac12 e^{27} + e^{45}
-\frac14 e^{57}, \\
& de^7 = 2 e^{14} + 2 e^{23} + \frac12 e^{26} - e^{35} +\frac14
e^{56}.
\end{aligned}
\end{equation}
A global qc structure on $L_2$ is defined by \eqref{qc1}.
It is easy to check from \eqref{ex2} that the triple $\{\xi_1=e_5,
\xi_2=e_6, \xi_3=e_7\}$ form the Reeb vector fields satisfying
\eqref{bi1} and therefore the Biquard connection do exists.
\begin{thrm}
\label{m22} Let $(L_2,\eta,\mathbb{Q})$ be the simply connected Lie
group
with Lie algebra $\mathfrak {l_2}$  equipped with the left invariant qc
structure $(\eta,\mathbb{Q})$ defined above. Then:
\begin{itemize}
\item[a)] The qc structure is qc Einstein 
and the normalized qc scalar curvature is a negative
constant, $S=-\frac14$. \item[b)] The qc conformal curvature
$W^{qc}\not=0$ and  therefore $(L_2,\eta,\mathbb{Q})$ is not
locally qc conformally flat.
\end{itemize}
\end{thrm}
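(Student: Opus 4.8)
The plan is to follow exactly the same template as the proof of Theorem~\ref{m1}, since the structure equations \eqref{ex2} have the same general shape as \eqref{ex11}: the vertical part $\eta_s=e^{4+s}$ with $d\eta_s|_H=2\omega_s$, and correction terms that are combinations of $e^{a,4+s}$ and $e^{5,6},e^{6,7},e^{7,5}$. First I would compute the $sp(1)$-connection $1$-forms $\alpha_s$ using \eqref{coneforms} for the restriction to $H$ and \eqref{coneform1} for the values on $V$, reading off $d\eta_s(\xi_j,X)$, $d\eta_s(\xi_i,\xi_j)$ directly from \eqref{ex2}. I expect to find again $\alpha_s=(c_0-\tfrac{S}{2})\eta_s$ for an explicit constant $c_0$ (here the coefficient of $e^{67}$ etc. in $de^5$ is $\tfrac14$, so presumably $c_0=\tfrac18$), whence $\rho_s(X,Y)=\tfrac12 d\alpha_s(X,Y)=(c_0-\tfrac{S}{2})\omega_s(X,Y)$ on $H$ via \eqref{sp1curv}. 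Comparing this with the first identity in \eqref{sixtyfour}, namely $\rho_l(X,I_lY)=-\tfrac12[T^0(X,Y)+T^0(I_lX,I_lY)]-2U(X,Y)-Sg(X,Y)$ with $U=0$ since $n=1$, forces $T^0=0$ and pins down $S$; then Theorem~\ref{sixtyseven} gives that the torsion endomorphism vanishes, i.e. the structure is qc Einstein, and that $S$ is constant. A short self-consistency check (e.g. matching the coefficient so that $c_0-\tfrac{S}{2}=-S$, giving $S=-2c_0=-\tfrac14$) completes part a).

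For part b) the point is the opposite of Theorem~\ref{m1}: I must exhibit a single nonzero component of $W^{qc}$. Since the torsion endomorphism is zero and $S=-\tfrac14$, formula \eqref{qcwdef1} collapses to the same simplified form as \eqref{qcwdef2}, namely
\begin{equation*}
W^{qc}(X,Y,Z,V)=R(X,Y,Z,V)-\frac{S}{4}\Big[-(g\owedge g)(X,Y,Z,V)-\sum_{s=1}^3\big((\omega_s\owedge\omega_s)(X,Y,Z,V)+4\omega_s(X,Y)\omega_s(Z,V)\big)\Big],
\end{equation*}
i.e. $W^{qc}=R+\tfrac{S}{4}\big[(g\owedge g)+\sum_s((\omega_s\owedge\omega_s)+4\omega_s\otimes\omega_s)\big]$ with $S=-\tfrac14$. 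So I would compute the horizontal curvature tensor $R(e_a,e_b,e_c,e_d)$ for $a,b,c,d\in\{1,\dots,4\}$ of the Biquard connection. Using \eqref{torhor} (valid here since $\xi_s$ are Reeb fields), the Koszul formula \eqref{lilc} for the left-invariant Levi-Civita connection from \eqref{ex2}, and then \eqref{lcbi} to pass to the Biquard connection, I get $\nabla$ explicitly on left-invariant fields, hence $R=[\nabla,\nabla]-\nabla_{[\,,\,]}$ on $H$. Unlike Example~1, the extra asymmetric terms in \eqref{ex2} (note $de^2=-e^{12}+e^{34}$ and the non-symmetric $-\tfrac12 e^{13}$, $-\tfrac12 e^{14}$, versus the symmetric pattern in \eqref{ex11}) should produce a horizontal curvature that is not the constant-curvature model tensor $-\tfrac{S}{4}\big[(g\owedge g)+\sum_s(\ldots)\big]$, so at least one $W^{qc}(e_a,e_b,e_c,e_d)\neq 0$; I would isolate and report that component, and conclude by Theorem~\ref{main1} that the structure is not locally qc conformally flat.

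The main obstacle is the curvature computation in part b): it requires first getting the full Biquard connection $\nabla_{e_a}e_b$ on the seven-dimensional algebra (including the mixed horizontal--vertical Christoffel symbols, since $[e_a,e_b]$ has vertical components $2\omega_s(e_a,e_b)\xi_s$ that feed back through \eqref{lcbi}), and then carefully assembling $R(e_a,e_b,e_c,e_d)$ — there is real bookkeeping here because, in contrast to Example~1 where $R(e_a,e_b,e_a,e_b)=1$ uniformly, here the curvature will be anisotropic. I would organize the work by exploiting the $Sp(1)$-equivariance (the identities \eqref{propt}, \eqref{need} and the qc-Bianchi-type relations in \eqref{sixtyfour}) to reduce the number of independent components that need to be checked, then verify that the model term does not match. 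A secondary (but genuine) subtlety is the $7$-dimensional caveat: one must confirm that $\{\xi_1,\xi_2,\xi_3\}=\{e_5,e_6,e_7\}$ indeed satisfy \eqref{bi1}, which is stated just before the theorem, so the Biquard connection and all the cited formulas from \cite{Biq1,IMV,IV} apply.
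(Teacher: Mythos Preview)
Your plan for part a) has a genuine gap: unlike Example~1, the $sp(1)$-connection forms for $L_2$ are \emph{not} purely vertical. Look again at \eqref{ex2}: the differentials $d\eta_s=de^{4+s}$ contain mixed horizontal--vertical terms such as $e^{37},\,e^{46},\,e^{27},\,e^{45},\,e^{26},\,e^{35}$, so \eqref{coneforms} forces $\alpha_s|_H\not=0$. The paper's computation actually gives
\[
\alpha_1=-\tfrac12 e^2-\bigl(\tfrac18+\tfrac{S}{2}\bigr)\eta_1,\qquad
\alpha_2=-e^3-\bigl(\tfrac18+\tfrac{S}{2}\bigr)\eta_2,\qquad
\alpha_3=-e^4-\bigl(\tfrac18+\tfrac{S}{2}\bigr)\eta_3.
\]
Consequently your shortcut $\rho_s|_H=\tfrac12\,d\alpha_s|_H$ is not valid: in \eqref{sp1curv} the cross term $\alpha_i\wedge\alpha_j$ now contributes on $H$ (for instance $(\alpha_2\wedge\alpha_3)|_H=e^{34}$). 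Only after feeding in both $d\alpha_k$ and $\alpha_i\wedge\alpha_j$ does one obtain $\rho_s(X,Y)=\bigl(\tfrac18-\tfrac{S}{2}\bigr)\omega_s(X,Y)$; comparing with the first line of \eqref{sixtyfour} (with $U=0$) then yields $T^0=0$ and $S=-\tfrac14$. Your self-consistency check happens to land on the right number, but the derivation as written would not justify it.

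For part b) your outline is correct and matches the paper's argument, though the paper is more economical: since $T^0=U=0$, the model terms in \eqref{qcwdef1} vanish on the specific quadruple $(e_1,e_2,e_3,e_4)$, so one needs only the single curvature component $W^{qc}(e_1,e_2,e_3,e_4)=R(e_1,e_2,e_3,e_4)$, which a direct computation from \eqref{lcbi}, \eqref{lilc}, \eqref{torhor} and \eqref{ex2} shows equals $-\tfrac12\neq 0$. There is no need to compute the full horizontal curvature.
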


\begin{proof}
We compute the $sp(1)$-connection 1-forms and the horizontal Ricci forms of
the Biquard connection. The Lie algebra structure equations
\eqref{ex2} together with \eqref{coneforms}, \eqref{coneform1} and
\eqref{sp1curv} imply
\begin{equation}  \label{ex2conf}
\begin{aligned}& \alpha_1=-\frac12e^2-(\frac18+\frac{S}2)\eta_1, \quad
\alpha_2=-e^3-(\frac18+\frac{S}2)\eta_2,\quad
\alpha_3=-e^4-(\frac18+\frac{S}2)\eta_3,\\
&\rho_s(X,Y)=(\frac18-\frac{S}2)\omega_s(X,Y). \end{aligned}
\end{equation}
Compare \eqref{ex2conf} with \eqref{sixtyfour} to conclude that the
torsion is zero and the  normalized qc scalar $S=-\frac14$.
Theorem~\ref{sixtyseven} completes the proof of part a).

In view of Theorem~\ref{main1}, we have to show
$W^{qc}(e_1,e_2,e_3,e_4)=R(e_1,e_2,e_3,e_4)\not=0$.
Indeed, using \eqref{torhor},  \eqref{lilc}, \eqref{lcbi} and the
structure equations \eqref{ex2}
we found $R(e_1,e_2,e_3,e_4)=-\frac 12\not=0$.
\end{proof}

The  Lie algebra $l_2$ of the group  $L_2$ is a a direct sum of $su(2)$ and the four dimensional solvable algebra $\mathfrak{a}_{4,8}$,   \cite{NB}, given respectively by
\begin{equation*}
\begin{aligned}
& su(2): \qquad d\tilde e^5=\frac 14 \tilde e^{67}, \ d\tilde e^6=\frac 14 \tilde e^{75},\ d\tilde e^7=\frac 14 \tilde e^{56}\\
& \mathfrak{a}_{4,8}: \qquad d\tilde e^1 =0,\quad d\tilde e^2 =-\tilde e^{12} + \tilde e^{34},\quad d\tilde e^3 =-\frac 12\tilde e^{13}, \quad d\tilde e^4= -\frac 12\tilde e^{14}.
\end{aligned}
\end{equation*}
This decomposition can be seen by letting
\begin{equation*}
 e^5=-2\tilde e^2 + \tilde e^5, \ e^6=-4\tilde e^3 + \tilde e^6,\ e^7=-4\tilde e^4 + \tilde e^7, \quad e^m =\tilde e^m, \ m=1,2,3,4.
\end{equation*}

\subsection{Example 3: non-zero torsion qc-non-flat structure.}

Consider the {solvable indecomposable}  Lie algebra $\mathfrak {l_3}$  defined by the equations
\begin{equation}  \label{ex31}
\begin{aligned}
& de^1=-\frac32e^{13}+\frac32e^{24}-\frac34e^{25}+\frac14e^{36}-\frac14e^{47}+
\frac18e^{57},\\
& de^2=-\frac32e^{14}-\frac32e^{23}+\frac34e^{15}+\frac14e^{37}+\frac14e^{46}-
\frac18e^{56},\\
& de^3=0, \qquad
de^4=e^{12}+e^{34}+\frac12e^{17}-\frac12e^{26}+\frac14e^{67}, \\
& de^5=2e^{12}+2e^{34}+e^{17}-e^{26}+\frac12e^{67},\\ &
de^6=2e^{13}+2e^{42}+e^{25},\qquad de^7=2e^{14}+2e^{23}-e^{15},
\end{aligned}
\end{equation}
and let $e_l, 1 \leq l\leq 7$ be the left invariant vector field dual
to the 1-forms ${e^l, 1 \leq l\leq 7}$. We define a
global qc structure on the corresponding Lie group $L_3$ by \eqref{qc1}.
It follows from \eqref{ex31} that the triple $\{\xi_1=e_5,
\xi_2=e_6, \xi_3=e_7\}$ form the Reeb vector fields satisfying
\eqref{bi1} and therefore the Biquard connection exists.

\begin{thrm}
\label{m3} Let $(L_3,\eta,\mathbb{Q})$ be the simply connected connected Lie
group
with Lie algebra $\mathfrak {l_3}$  equipped with the left invariant qc
structure $(\eta,\mathbb{Q})$ defined by \eqref{qc1}. Then
\begin{itemize}
\item[a)] The torsion endomorphism of the Biquard connection is
not zero and the normalized qc scalar
curvature is negative, $S=-1$.
\item[b)] The qc conformal curvature  $W^{qc}\not=0$, and
therefore $(L_3,\eta,\mathbb{Q})$ is not locally qc conformally
flat.
\end{itemize}
\end{thrm}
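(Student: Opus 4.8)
The plan is to mirror exactly the strategy used for Theorems \ref{m1} and \ref{m22}, carrying out the computation of the Biquard connection data directly from the structure equations \eqref{ex31}. First I would compute the $sp(1)$-connection 1-forms $\alpha_1,\alpha_2,\alpha_3$ on $H$ using the Biquard formula \eqref{coneforms}, $\alpha_i(X)=d\eta_k(\xi_j,X)=-d\eta_j(\xi_k,X)$ with $\eta_1=e^5,\eta_2=e^6,\eta_3=e^7$ and $\xi_s=e_s$; since $d\eta_s$ has nonzero mixed terms like $e^{17},e^{26},e^{67}$ in \eqref{ex31}, unlike Example 1, these $\alpha_s$ will have nonzero horizontal components, which is the first sign that the torsion endomorphism need not vanish. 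Then I would compute the values $\alpha_i(\xi_s)$ on the vertical space from \eqref{coneform1}, which also yields the normalized qc scalar curvature $S$ via the relation $S=-g(T(\xi_1,\xi_2),\xi_3)$ in \eqref{sixtyfour}; the claim is $S=-1$.

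Next, to prove part a), I would use the curvature identity \eqref{sp1curv}, $2\rho_k(A,B)=(d\alpha_k+\alpha_i\wedge\alpha_j)(A,B)$, to obtain the horizontal qc-Ricci forms $\rho_s(X,Y)$, and then compare with the first line of \eqref{sixtyfour}, $\rho_l(X,I_lY)=-\tfrac12[T^0(X,Y)+T^0(I_lX,I_lY)]-2U(X,Y)-Sg(X,Y)$ (with $U=0$ since $n=1$). Unlike Examples 1 and 2, the $\rho_s$ will fail to be a pure multiple of $\omega_s$, so solving for $T^0$ yields a nonzero trace-free symmetric tensor; this establishes that the torsion endomorphism $T_\xi=T^0_\xi$ is not zero. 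I would exhibit at least one explicit nonzero component of $T^0$ to make the claim unambiguous, and simultaneously read off $S=-1$ from the trace part.

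For part b), since the torsion is now nonzero, I cannot use the simplified form \eqref{qcwdef2}; instead I would compute the full curvature tensor $R(e_a,e_b,e_c,e_d)$ on $H$ via the Koszul formula \eqref{lilc} for the Levi-Civita connection (using left-invariance), the relation \eqref{lcbi} converting to the Biquard connection, and the torsion formula \eqref{torhor}, $T(X,Y)=2\sum_s\omega_s(X,Y)\xi_s$, together with the now-computed values of $T(\xi,\cdot)$. Plugging $R$, $T^0$, $U=0$, $S=-1$ and the $\omega_s$ into the defining formula \eqref{qcwdef1} for $W^{qc}$, I would evaluate enough components — e.g.\ $W^{qc}(e_1,e_2,e_3,e_4)$ or a component with three distinct indices — to show it is nonzero, and then invoke Theorem~\ref{main1} to conclude that $(L_3,\eta,\mathbb{Q})$ is not locally qc conformally flat.

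The main obstacle is purely computational bookkeeping: the structure equations \eqref{ex31} are considerably more involved than those of Examples 1 and 2 (many mixed horizontal-vertical terms, non-integrable vertical distribution since the torsion does not vanish), so the Levi-Civita Christoffel symbols, the torsion correction terms in \eqref{lcbi}, and the resulting curvature components must all be tracked carefully; the risk is an arithmetic slip rather than a conceptual gap. A secondary subtlety is confirming that \eqref{bi1} genuinely holds for $\xi_s=e_s$ — i.e.\ that $(\xi_s\lrcorner d\eta_s)_{|H}=0$ and $(\xi_s\lrcorner d\eta_k)_{|H}=-(\xi_k\lrcorner d\eta_s)_{|H}$ — which should be a direct check from \eqref{ex31} but is needed to legitimately apply Duchemin's theorem and hence the Biquard connection formalism in dimension seven.
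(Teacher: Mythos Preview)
Your proposal is correct and follows essentially the same route as the paper: compute $\alpha_s$ from \eqref{coneforms}--\eqref{coneform1}, then $\rho_s$ from \eqref{sp1curv}, compare with \eqref{sixtyfour} to extract $S=-1$ and a nonzero $T^0$, and finally evaluate a component of $W^{qc}$ via \eqref{lcbi}, \eqref{lilc}, \eqref{torhor}. The one shortcut the paper uses that you may want to adopt is that on the quadruple $(e_1,e_2,e_3,e_4)=(e_1,-I_1e_1,-I_2e_1,-I_3e_1)$ every $T^0$-, $U$-, and $S$-term in \eqref{qcwdef1} vanishes (by \eqref{propt} and the identities $\omega_s(e_a,e_b)\omega_s(e_c,e_d)$ pairing), so $W^{qc}(e_1,e_2,e_3,e_4)=R(e_1,e_2,e_3,e_4)=-\tfrac12$ with no need to assemble the full formula.
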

\begin{proof}
The  structure equations
\eqref{ex31} together with \eqref{coneforms}, \eqref{coneform1}
imply
\begin{equation}  \label{ex3conf}
\alpha_1=(\frac14-\frac{S}2)\eta_1, \qquad \alpha_2=-e^1-(\frac14+\frac{S}
2)\eta_2,\qquad \alpha_3=-e^2-(\frac14+\frac{S}2)\eta_3.
\end{equation}
Now, \eqref{ex3conf}, \eqref{ex31} and \eqref{sp1curv} yield
\begin{equation}  \label{rtor}
\begin{aligned}
\rho_1(X,Y)=\frac12\Big[(\frac12-S)(e^{12}+e^{34})+e^{12}\Big](X,Y)=
\frac14(e^{12}-e^{34})(X,Y)+ \frac12(1-S)\omega_1(X,Y),\\
\rho_2(X,Y)=\frac12\Big[\frac32(e^{13}-e^{24})(X,Y)-(
\frac12+S)(e^{13}-e^{24})(X,Y)\Big]= +\frac12(1-S)\omega_2(X,Y),\\
\rho_3(X,Y)=\frac12\Big[\frac32(e^{14}+e^{23})(X,Y)-(
\frac12+S)(e^{14}+e^{23})(X,Y)\Big]= +\frac12(1-S)\omega_3(X,Y).
\end{aligned}
\end{equation}
Comparing \eqref{rtor} with \eqref{sixtyfour} we conclude
\begin{equation}  \label{tr3}
\begin{aligned} T^0(X,I_1Y)-T^0(I_1X,Y)=\frac12(e^{12}-e^{34})(X,Y), \qquad
S=-1,\\ T^0(X,I_2Y)-T^0(I_2X,Y)=0, \quad T^0(X,I_3Y)-T^0(I_3X,Y)=0.
\end{aligned}
\end{equation}
Now  we compute $W^{qc}$.
Denote $\psi=-\frac14(e^{12}-e^{34})$ and compare \eqref{tr3}
with \eqref{propt} and \eqref{need} to get
\begin{equation}  \label{tor3}
T^0(X,Y)=\psi(X,I_1Y), \qquad
g(T(\xi_s,X),Y)=-\frac14(\psi(I_sX,I_1Y)+\psi(X,I_1I_sY)).
\end{equation}
Using $U=0$ and  \eqref{propt} we conclude
from \eqref{qcwdef1} that $ W^{qc}(e_1,e_2,e_3,e_4)=R(e_1,e_2,e_3,e_4)$ since
other terms on the right hand
side of \eqref{qcwdef1} vanish on the
quadruple $\{e_1,e_2=-I_1e_1,e_3=-I_2e_1,e_4=-I_3e_1\}$.
Using \eqref{lcbi}, \eqref{lilc},
\eqref{torhor}, \eqref{ex31} and \eqref{tor3}, we obtain $R(e_1,e_2,e_3,e_4)=-\frac12\not=0$.
\end{proof}

\section{$Sp(n)Sp(1)$-hypo structures and  quaternionic K\"ahler manifolds}
\label{qsph}

Guided by Examples 0--3 in the last Section we relax the definition of a qc
structure by removing the ``contact condition''
$d\eta_{s_{|H}}=2\omega_s$. In this way we
come to an $Sp(n)Sp(1)$ structure (almost $3$-contact structure
see \cite{Kuo}). The goal  is to obtain a geometric structure which may induce a
quaternionic K\"ahler metric on a product of the given manifold  with (an interval of)  the real line.

\begin{dfn}\label{d:sp(n)sp(1)structure}
An $Sp(n)Sp(1)$ structure on a $(4n+3)$-dimensional Riemannian
manifold $(M,g)$ is a codimension three distribution $H$ { such
that}
\begin{enumerate}
\item[i)]  $H$ has an $Sp(n)Sp(1)$ structure, that is, it is
equipped with a Riemannian metric $g$ and a rank-three bundle
$\mathbb Q$ consisting of (1,1)-tensors on $H$ locally generated by three
almost complex structures $I_1,I_2,I_3$ on $H$ satisfying the identities
of the imaginary unit quaternions, $I_1I_2=-I_2I_1=I_3, \quad
I_1I_2I_3=-id_{|_H}$ which are hermitian compatible with
the metric $g(I_s.,I_s.)=g(.,.)$, i.e. $H$ has an almost
quaternion hermitian structure.
\item[ii)] $H$
is locally given as the kernel of a 1-form
$\eta=(\eta_1,\eta_2,\eta_3)$ with values in $\mathbb{R}^3$.
\end{enumerate}
The local fundamental 2-forms are defined on $H$ as usual by
$\omega_s(X,Y)=g(I_sX,Y)$.
\end{dfn}
If the first Pontrjagin class of $M$ vanishes then
the 1-forms $\eta_s$ as well as the fundamental 2-forms $\omega_s$
are globally defined \cite{AK}.

\begin{dfn} We define a global $Sp(n)Sp(1)$-invariant
4-form of an $Sp(n)Sp(1)$ structure $(M,g,\mathbb Q)$ on a
$(4n+3)$-dimensional manifold $M$ by the formula
\begin{equation}\label{res4}
\Omega_{\mathbb Q}=\omega_1^2+\omega_2^2+\omega_3^2+2\omega_1\wedge\eta_2\wedge\eta_3+
2\omega_2\wedge\eta_3\wedge\eta_1+2\omega_3\wedge\eta_1\wedge\eta_2.
\end{equation}
\end{dfn}

Let $M^{4n+4}$ be a $(4n+4)$- dimensional manifold equipped
with an $Sp(n+1)Sp(1)$ structure, i.e.\\ $(M^{4n+4},g,J_1,J_2,J_3)$ is
an almost quaternion hermitian manifold with local K\"ahler forms
$F_i(.,.)=g(J_i.,.)$. The  fundamental 4-form
\begin{equation}\label{4-form}\Phi=F_1\wedge F_1+F_2\wedge
F_2+F_3\wedge F_3
\end{equation}
is globally defined and encodes fundamental properties of the
structure. If the holonomy of the Levi-Civita connection is
contained in $Sp(n+1)Sp(1)$  then the manifold is a quaternionic
K\"ahler  manifold which is consequently an Einstein manifold.
Equivalent conditions are either that  the fundamental 4-form $\Phi$ is parallel with respect
to the Levi-Civita connection or $\Phi$ is a closed form and
\begin{equation}\label{qkcon}
dF_s=0 \quad {\text mod}\quad  \{F_i,F_j,F_k\}
\end{equation}
\cite{Sw2}. The latter is equivalent to the fact
that the fundamental 4-form is closed ($d\Phi=0$) provided the
dimension is strictly bigger than eight (\cite{Sw,Sw2, Sal}) with a
counter-example in dimension eight constructed by Salamon in
\cite{Sal}.

Let $f: N^{4n+3}\longrightarrow M^{4n+4}$ be an oriented
hypersurface of $M^{4n+4}$ and denote by $\mathbb{N}$ the unit
normal vector field. Then an $Sp(n+1)Sp(1)$ structure on $M$ induces
an $Sp(n)Sp(1)$ structure on $N^{4n+3}$ locally given by
$(\eta_s,\omega_s)$  defined by the equalities
$
\eta_s=\mathbb{N}\lrcorner F_s,\quad
\omega_i=f^*F_i-\eta_j\wedge\eta_k$.
The
fundamental four form $\Phi$ on $M$ restricts to the
four form $\Omega_{\mathbb Q}$ on $N$,
$
\Omega_{\mathbb Q}=f^*\Phi=(f^*F_1)^2+(f^*F_2)^2+(f^*F_3)^2.
$

Suppose that $(M^{4n+4},g)$ has holonomy contained in
$Sp(n+1)Sp(1)$. Then $d\Phi=0$
implies that the $Sp(n)Sp(1)$ structure  induced on $N^{4n+3}$
satisfies the equation
\begin{equation}  \label{qrhypo}
d\Omega_{\mathbb Q}=0,
\end{equation}
since $d$ comutes with $f^*$, $df^*=f^*d$.
\begin{dfn}
An $Sp(n)Sp(1)$ structure $(M,g,\mathbb Q)$ on a
$(4n+3)$-dimensional manifold is called \textrm{Sp(n)Sp(1) - hypo}
if the 4-form $\Omega_{\mathbb Q}$ is closed, $d\Omega_{\mathbb Q}=0$.
\end{dfn}

Hence, any oriented hypersurface $N^{4n+3}$ of a quaternionic
K\"ahler $M^{4n+4}$ is naturally endowed with an $Sp(n)Sp(1)$-hypo
structure.

Vice versa, a $(4n+3)$-manifold $N^{4n+3}$ with an $Sp(n)Sp(1)$
structure $(\eta_s,\omega_s)$ induces an $Sp(n+1)Sp(1)$ structure
$(F_s)$ on $N^{4n+3}\times \mathbb{R}$ defined by
\begin{equation}  \label{qhyp0}
F_i=\omega_i+\eta_j\wedge\eta_k-\eta_i\wedge dt,
\end{equation}
where $t$ is a coordinate on $\mathbb{R}$.

Consider the family of $Sp(n)Sp(1)$ structures $(\eta_s(t),\omega_s(t))$ on
$N^{4n+3}$
depending on a real parameter $t\in\mathbb{R}$, and the
corresponding $Sp(n+1)Sp(1)$ structures $F_s(t)$ on $N^{4n+3}\times\mathbb{R}$.

\begin{prop}\label{qevpro} An $Sp(n)Sp(1)$
structure $(\eta_s,\omega_s)$ on $N^{4n+3}$ can be lifted to an almost
quaternionic hermitian  structure $(F_s(t))$ with a closed four form on $N^{4n+3}\times\mathbb{R}$
defined by \eqref{qhyp0} if and only if it is an $Sp(n)Sp(1)$-hypo
structure
which generates a 1-parameter family of $Sp(n)Sp(1)$-hypo
structures $(\eta_s(t),\omega_s(t))$ satisfying the following \emph{evolution
Sp(n)Sp(1)-hypo equations}
\begin{equation}  \label{qevolunk}
\partial_t\Omega_{\mathbb Q}(t)=d\Big[6\eta_1(t)\wedge\eta_2(t)\wedge\eta_3(t)+2\omega_1(t)\wedge\eta_1(t)+
2\omega_2(t)\wedge\eta_2(t)+2\omega_3(t)\wedge\eta_3(t)\Big],
\end{equation}
where $d$ is the exterior derivative on $N$.

If $n\geq2$, then the almost
 quaternionic hermitian  structure $(F_s(t))$ with a closed four form on $N^{4n+3}\times\mathbb{R}$
defined by \eqref{qhyp0} is quaternionic K\"ahler.
\end{prop}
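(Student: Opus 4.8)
The plan is to reduce the whole statement to one algebraic identity for the fundamental four-form, a splitting of the exterior derivative on the product, and — for the quaternionic K\"ahler conclusion — the quoted theorem of Swann and Salamon. Throughout write $(i,j,k)$ for a cyclic permutation of $(1,2,3)$ and suppress the parameter $t$. First I would compute the fundamental four-form $\Phi=F_1\wedge F_1+F_2\wedge F_2+F_3\wedge F_3$ of the structure $(F_s)$ defined by \eqref{qhyp0}. Squaring $F_i=\omega_i+\eta_j\wedge\eta_k-\eta_i\wedge dt$, the terms $\eta_j\wedge\eta_k\wedge\eta_j\wedge\eta_k$ and $\eta_i\wedge dt\wedge\eta_i\wedge dt$ drop out, and collecting the cross terms (using that $\omega_i$ is a two-form, hence central, and that $\eta_j\wedge\eta_k\wedge\eta_i=\eta_1\wedge\eta_2\wedge\eta_3$) yields
\[
F_i\wedge F_i=\omega_i^2+2\,\omega_i\wedge\eta_j\wedge\eta_k-2\,\omega_i\wedge\eta_i\wedge dt-2\,\eta_1\wedge\eta_2\wedge\eta_3\wedge dt .
\]
Summing over $i$ and recognizing the first two groups as the definition \eqref{res4} of $\Omega_{\mathbb Q}$, one obtains the key identity
\[
\Phi=\Omega_{\mathbb Q}(t)-\Big[\,6\,\eta_1(t)\wedge\eta_2(t)\wedge\eta_3(t)+2\sum_{s=1}^{3}\omega_s(t)\wedge\eta_s(t)\,\Big]\wedge dt ,
\]
that is, $\Phi=\Omega_{\mathbb Q}(t)-\Theta(t)\wedge dt$, where $\Theta(t)$ denotes the three-form in brackets — exactly the three-form on the right of \eqref{qevolunk}. (That $(F_s)$ is already an almost quaternion hermitian structure on $N^{4n+3}\times\mathbb R$, with the natural product metric, is recorded just before \eqref{qhyp0}, so only the closedness of $\Phi$ is at stake.)

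Next I would differentiate using $d=d_N+dt\wedge\partial_t$ on $N^{4n+3}\times\mathbb R$ together with $dt\wedge dt=0$. Since $\Omega_{\mathbb Q}(t)$ has no $dt$-component and $\Theta(t)$ is a three-form on $N$,
\[
d\Phi=d_N\Omega_{\mathbb Q}(t)+\big(\partial_t\Omega_{\mathbb Q}(t)-d_N\Theta(t)\big)\wedge dt .
\]
The two summands live in complementary bidegrees (one is a pure $N$-form, the other carries $dt$), so $d\Phi=0$ if and only if $d_N\Omega_{\mathbb Q}(t)=0$ for every $t$ and $\partial_t\Omega_{\mathbb Q}(t)=d_N\Theta(t)$. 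The first condition says precisely that $(\eta_s(t),\omega_s(t))$ is a one-parameter family of $Sp(n)Sp(1)$-hypo structures (in particular the one at the base value is hypo), and the second is exactly the evolution equation \eqref{qevolunk}. This establishes the asserted equivalence: the lift $(F_s(t))$ built from \eqref{qhyp0} has closed four-form precisely when the given $Sp(n)Sp(1)$ structure is $Sp(n)Sp(1)$-hypo and extends to a hypo family solving \eqref{qevolunk}.

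For the last assertion, note that $\dim(N^{4n+3}\times\mathbb R)=4n+4\ge 12>8$ whenever $n\ge 2$. By the result of Swann and Salamon quoted above, in dimension strictly greater than eight the closedness $d\Phi=0$ of the fundamental four-form of an almost quaternion hermitian manifold is equivalent to $\Phi$ being parallel for the Levi-Civita connection, hence to the holonomy being contained in $Sp(n+1)Sp(1)$; thus the manifold is quaternionic K\"ahler. Since the construction above produces $d\Phi=0$, the lifted structure $(F_s(t))$ is quaternionic K\"ahler.

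The only non-formal point is the first step: the square of each $F_i$ must be expanded with care about the signs picked up when commuting the $1$-forms $\eta_s$ and $dt$ past one another, and one must verify that the horizontal cross terms reassemble exactly into $\Omega_{\mathbb Q}$ while the $dt$-cross terms assemble into the precise combination $6\,\eta_1\wedge\eta_2\wedge\eta_3+2\sum_s\omega_s\wedge\eta_s$ occurring in \eqref{qevolunk}. Everything downstream — the differentiation and the appeal to the Swann--Salamon theorem — is then routine.
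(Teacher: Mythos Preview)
Your argument is correct and follows essentially the same route as the paper: expand $\Phi$ via \eqref{qhyp0}, split the exterior derivative on the product into its $N$-part and its $dt$-part, and read off the two conditions; then invoke Swann--Salamon for $n\ge 2$. Your explicit identity $\Phi=\Omega_{\mathbb Q}(t)-\Theta(t)\wedge dt$ makes the bookkeeping cleaner than the paper's terse ``apply \eqref{qhyp0} to \eqref{4-form} and take the exterior derivative''.

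One small point of difference: you conclude $d\Phi=0$ iff $d_N\Omega_{\mathbb Q}(t)=0$ for \emph{every} $t$ together with the evolution equation, and then interpret the first condition as the family being hypo. The paper goes one step further and observes that the hypo condition is automatically propagated by the evolution: from \eqref{qevolunk} one gets $\partial_t\bigl(d_N\Omega_{\mathbb Q}(t)\bigr)=d_N^{\,2}\Theta(t)=0$, so $d_N\Omega_{\mathbb Q}(t)=0$ for all $t$ as soon as it holds at $t=0$. This is not a gap in your proof of the proposition as stated, but it is the reason the statement only asks for the \emph{initial} structure to be hypo and then speaks of it ``generating'' a hypo family.
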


\begin{proof}
If we apply \eqref{qhyp0} to \eqref{4-form} and then take  the
exterior derivative in the obtained equation we see that the
equality $d\Phi=0$ holds precisely when \eqref{qrhypo} and
\eqref{qevolunk} are fulfilled.

It remains to show that the equations \eqref{qevolunk} imply
that \eqref{qrhypo} holds for each $t$. Using \eqref{qevolunk}, we
calculate
\begin{gather*}
\partial_td\Omega_{\mathbb Q}= d^2\Big[6\eta_1(t)\wedge\eta_2(t)\wedge\eta_3(t)+2\omega_1(t)\wedge\eta_1(t)+
2\omega_2(t)\wedge\eta_2(t)+2\omega_3(t)\wedge\eta_3(t)\Big]=0.
\end{gather*}
Hence, the equalities \eqref{qrhypo} are independent of $t$ and
therefore valid for all $t$ since they hold  for
$t=0$.
\end{proof}
It is  interesting to know whether the converse of Proposition~\ref{qevpro} holds, i.e., is
it true that any $Sp(n)Sp(1)$-hypo structure
on $N^{4n+3}, \ n>2$, can be lifted to a
quaternionic K\"ahler structure on $N^{4n+3}\times\mathbb{R}$?

Solutions to \eqref{qrhypo} are given in the case of 3-Sasakian
manifolds in \cite{WM}. We construct explicit
examples relying on the properties of the qc structures for which we solve the evolution equations \eqref{qevolunk}.

\subsection{Quaternionic contact and $Sp(n)Sp(1)$-hypo structures}
We show the conditions under which a qc structure
is an $Sp(n)Sp(1)$-hypo. For $n=1$, we prove that it happens exactly when
the vertical distribution is integrable, while for $n>1$ we have that a qc structure is $Sp(n)Sp(1)$-hypo if and only if it is qc Einstein.
To this end we recall that the structure equations of a qc structure, discovered in \cite{IV1},
are
\begin{gather}  \label{streq}
2\omega_i=d\eta_i+\eta_j\wedge\alpha_k-\eta_k\wedge\alpha_j + S
\eta_j\wedge\eta_k,
\end{gather}
and a 3-Sasakian qc structure is characterized by the structure equations
$
2\omega_i=d\eta_i-2 \eta_j\wedge\eta_k.
$

The $Sp(n)Sp(1)$-invariant horizontal 4-form, called the fundamental 4-form,  is defined in \cite{IV1} by
 \begin{equation}\label{fund4}
\Omega=\omega_1\wedge\omega_1+\omega_2\wedge\omega_2+\omega_3\wedge\omega_3.
\end{equation}
If the dimension of the qc manifold  is bigger than seven it turns out that qc Einstein condition 
is equivalent the
fundamental $4$-form $\Omega$
being closed, see \cite{IV1}.
\begin{prop}\label{qchypo}
A qc structure is $Sp(n)Sp(1)$-hypo structure if and only if its fundamental four form is closed, $d\Omega=0$.
\end{prop}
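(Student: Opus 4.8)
The plan is to relate the global four-form $\Omega_{\mathbb Q}$ of \eqref{res4} to the horizontal fundamental four-form $\Omega$ of \eqref{fund4} on a qc manifold, and to show that $d\Omega_{\mathbb Q}=0$ is equivalent to $d\Omega=0$. First I would substitute the qc structure equations \eqref{streq}, namely $2\omega_i=d\eta_i+\eta_j\wedge\alpha_k-\eta_k\wedge\alpha_j+S\,\eta_j\wedge\eta_k$, into the three ``mixed'' terms $2\omega_i\wedge\eta_j\wedge\eta_k$ appearing in \eqref{res4}. Since any product of three of the $\eta$'s with another $\eta$ vanishes by skew-symmetry whenever two indices coincide, the term $S\,\eta_j\wedge\eta_k$ contributes nothing once wedged with $\eta_j\wedge\eta_k$, and the $\alpha$-terms produce $\eta_j\wedge\eta_k\wedge(\eta_j\wedge\alpha_k-\eta_k\wedge\alpha_j)=0$ for the same reason; hence $2\omega_i\wedge\eta_j\wedge\eta_k=d\eta_i\wedge\eta_j\wedge\eta_k$ modulo nothing. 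Summing cyclically, $\sum 2\omega_i\wedge\eta_j\wedge\eta_k=\sum d\eta_i\wedge\eta_j\wedge\eta_k=\tfrac13\,d\big(\sum \eta_i\wedge\eta_j\wedge\eta_k\big)$ up to sign, because $d(\eta_1\wedge\eta_2\wedge\eta_3)=d\eta_1\wedge\eta_2\wedge\eta_3-\eta_1\wedge d\eta_2\wedge\eta_3+\eta_1\wedge\eta_2\wedge d\eta_3$ and each of these equals $d\eta_i\wedge\eta_j\wedge\eta_k$ up to the sign of the cyclic permutation. So the mixed part of $\Omega_{\mathbb Q}$ is exact, and therefore $d\Omega_{\mathbb Q}=d\big(\omega_1^2+\omega_2^2+\omega_3^2\big)=d\Omega$.

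Once that identity is in hand, the proposition is immediate: $d\Omega_{\mathbb Q}=0$ if and only if $d\Omega=0$, which is exactly the statement that the qc structure is $Sp(n)Sp(1)$-hypo if and only if its fundamental four form is closed. I would then remark that this gives the promised dichotomy already stated in the text: combined with the result quoted from \cite{IV1} that in dimension $>7$ the condition $d\Omega=0$ is equivalent to the qc Einstein condition, one obtains that a qc manifold of dimension $4n+3>7$ is $Sp(n)Sp(1)$-hypo precisely when it is qc Einstein; and in dimension seven one needs the separate (also quoted) characterization of $d\Omega=0$ in terms of integrability of the vertical distribution.

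The only genuinely delicate point is bookkeeping with signs and cyclic permutations: one must be careful that the coefficient ``$2$'' in front of the mixed terms of \eqref{res4} is precisely matched so that the combination is $d$ of $2(\omega_1\wedge\eta_1+\omega_2\wedge\eta_2+\omega_3\wedge\eta_3)+6\eta_1\wedge\eta_2\wedge\eta_3$-type primitives — in fact one should verify that $2\sum_i \omega_i\wedge\eta_j\wedge\eta_k$ equals $\sum_i d\eta_i\wedge\eta_j\wedge\eta_k$ and recognize the right-hand side as an exact form. Here $d\eta_i = 2\omega_i - \eta_j\wedge\alpha_k+\eta_k\wedge\alpha_j - S\,\eta_j\wedge\eta_k$ must be used consistently, and one should double check that $\omega_i$ is horizontal ($\xi_s\lrcorner\,\omega_i=0$) so that terms like $\omega_i\wedge\eta_j\wedge\eta_k$ behave as expected under $d$ and no hidden vertical contributions appear. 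I expect the rest to be a routine three-line computation.
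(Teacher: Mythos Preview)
Your argument is correct and follows the same overall strategy as the paper: write $\Omega_{\mathbb Q}=\Omega+2\sum\omega_i\wedge\eta_j\wedge\eta_k$ and show that the mixed term has vanishing exterior derivative, so that $d\Omega_{\mathbb Q}=d\Omega$.

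There is one small but pleasant difference worth flagging. The paper proves the mixed term is \emph{closed} by computing $d\big(\sum\omega_i\wedge\eta_j\wedge\eta_k\big)$ directly, which requires both the structure equations \eqref{streq} for $d\eta_i$ and the derived formula \eqref{strom} for $d\omega_i$ (involving the Ricci forms $\rho_s$ and $dS$), and then checks that everything cancels. Your route is shorter: using only \eqref{streq} you observe $2\omega_i\wedge\eta_j\wedge\eta_k=d\eta_i\wedge\eta_j\wedge\eta_k$, sum cyclically, and recognize the result as $d(\eta_1\wedge\eta_2\wedge\eta_3)$, so the mixed term is \emph{exact}. This avoids \eqref{strom} entirely. (Your parenthetical about a primitive of the form $2\sum\omega_s\wedge\eta_s+6\eta_1\wedge\eta_2\wedge\eta_3$ is unnecessary and a bit off --- the primitive you already found, $\eta_1\wedge\eta_2\wedge\eta_3$, is the right one; your factor ``$\tfrac13$'' is also harmless since $\sum_{\mathrm{cyc}}\eta_i\wedge\eta_j\wedge\eta_k=3\,\eta_1\wedge\eta_2\wedge\eta_3$.)
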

\begin{proof}
Comparing the definitions of $\Omega_{\mathbb Q}$ and $\Omega$ we see that it is sufficient to show that the four form
$$\omega_1\wedge\eta_2\wedge\eta_3+\omega_2\wedge\eta_3\wedge\eta_1+\omega_3\wedge\eta_1\wedge\eta_2$$
is closed. The structure equations \eqref{streq} imply \cite{IV1}
\begin{equation}\label{strom}
d\omega_i=\omega_j\wedge(\alpha_k+S\eta_k)-\omega_k\wedge(\alpha_j+S\eta_j)-\rho_k\wedge\eta_j+
\rho_j\wedge\eta_k+\frac12dS\wedge\eta_j\wedge\eta_k.
\end{equation}
Using \eqref{streq} and \eqref{strom} we obtain the validity of the next
\begin{lemma}\label{closedqc}
For any qc structure we have
$$d(\omega_1\wedge\eta_2\wedge\eta_3+\omega_2\wedge\eta_3\wedge\eta_1+\omega_3\wedge\eta_1\wedge\eta_2)=0.$$
\end{lemma}
Now, Lemma~\eqref{closedqc} implies $d\Omega_{\mathbb Q}=0$ precisely when $d\Omega=0$.
\end{proof}
The main theorem of \cite{IV1} asserts that a qc strucutre on a qc manifold of dimension strictly bigger than seven has closed fundamental four form exactly when it has zero torsion endomorphism of the Biquard connection. In dimension seven we have the following result.
\begin{thrm}\label{integr-closed}
For a qc structure in dimension seven the next conditions are equivalent.
\begin{itemize}
\item [a)] The fundamental four form is closed, $d\Omega=0$.
\item [b)] The vertical distribution is integrable.
\item [c)] The qc structure is $Sp(1)Sp(1)$-hypo structure.
\end{itemize}
\end{thrm}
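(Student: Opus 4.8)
The plan is to establish the cycle of equivalences (c) $\Leftrightarrow$ (a) $\Leftrightarrow$ (b). The first equivalence is essentially already available: the qc structure is $Sp(1)Sp(1)$-hypo precisely when $d\Omega_{\mathbb Q}=0$, while comparing \eqref{res4} with \eqref{fund4} gives
\[
\Omega_{\mathbb Q}=\Omega+2\big(\omega_1\wedge\eta_2\wedge\eta_3+\omega_2\wedge\eta_3\wedge\eta_1+\omega_3\wedge\eta_1\wedge\eta_2\big),
\]
whose second summand is closed by Lemma~\ref{closedqc}; hence $d\Omega_{\mathbb Q}=d\Omega$, so (c) $\Leftrightarrow$ (a). The real content is therefore (a) $\Leftrightarrow$ (b), i.e. that $d\Omega=0$ holds exactly when the vertical distribution $V$ is integrable. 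I would work in a local orthonormal frame $e_1,\dots,e_4,\xi_1,\xi_2,\xi_3$ adapted to the qc structure, so that $\omega_1=e^{12}+e^{34}$, $\omega_2=e^{13}+e^{42}$, $\omega_3=e^{14}+e^{23}$. Since $n=1$ the $\omega_s$ are purely horizontal and satisfy $\omega_i\wedge\omega_j=2\delta_{ij}\,\mathrm{vol}_H$ with $\mathrm{vol}_H:=e^1\wedge e^2\wedge e^3\wedge e^4$; hence $\Omega=6\,\mathrm{vol}_H$ and the whole problem reduces to computing $d(\mathrm{vol}_H)$.

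For this I would split each $de^a$, $1\le a\le 4$, into its $\Lambda^2H^*$, $H^*\wedge V^*$, and $\Lambda^2V^*$ components, using $de^a(A,B)=-g([A,B],e_a)$ on frame vectors together with $[A,B]=\nabla_AB-\nabla_BA-T(A,B)$ for the Biquard connection $\nabla$. The $\Lambda^2V^*$-part is simply $-\sum_{i<j}g([\xi_i,\xi_j],e_a)\,\eta_i\wedge\eta_j$, and by Theorem~\ref{sixtyseven} one has $[\xi_i,\xi_j]_H=-T(\xi_i,\xi_j)_H$, so this part is precisely the obstruction to $[\xi_i,\xi_j]$ lying in $V$. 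For the remaining components I would use that $\nabla$ preserves $H\oplus V$ and is metric, that $T(X,Y)=2\sum_s\omega_s(X,Y)\xi_s\in V$ for $X,Y\in H$, and that the torsion endomorphism $T(\xi,\cdot)$ maps $H$ to $H$ (Theorem~\ref{biqcon}): then the $\Lambda^2H^*$-part of $de^a$ is a horizontal $2$-form, while the diagonal entries of its $H^*\wedge V^*$-part are $de^a(e_a,\xi_s)=-g(T_{\xi_s}e_a,e_a)=-g(T^0_{\xi_s}e_a,e_a)$, the $\nabla$-terms and the skew part $b_{\xi_s}$ contributing nothing.

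Now I expand $d(\mathrm{vol}_H)$ by the Leibniz rule and read off the $5$-forms that occur. The $\Lambda^2H^*$-parts of the $de^a$ contribute nothing, since they would supply five horizontal wedge factors in a rank-four space. The $H^*\wedge V^*$-parts produce only $5$-forms of type $\eta_s\wedge\mathrm{vol}_H$, and their coefficient adds up to $\pm\sum_{a=1}^4 g(T^0_{\xi_s}e_a,e_a)=\pm\,\mathrm{tr}\,T^0_{\xi_s}=0$, because the torsion endomorphism is completely trace-free. Hence only the $\Lambda^2V^*$-parts survive, and
\[
d\Omega=6\,d(\mathrm{vol}_H)=-6\sum_{i<j}\ \sum_{m=1}^{4} g([\xi_i,\xi_j],e_m)\;\eta_i\wedge\eta_j\wedge(e_m\lrcorner\,\mathrm{vol}_H),
\]
a linear combination of linearly independent $5$-forms. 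Therefore $d\Omega=0$ iff $g([\xi_i,\xi_j],e_m)=0$ for all $i<j$ and $1\le m\le 4$, iff $[\xi_i,\xi_j]_H=0$ for all $i,j$, which (since $\xi_1,\xi_2,\xi_3$ span $V$) is exactly the Frobenius integrability of $V$. Combined with the first paragraph, this yields (a) $\Leftrightarrow$ (b) $\Leftrightarrow$ (c).

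The step I expect to be the main obstacle is the bookkeeping in the preceding paragraph: verifying that every mixed horizontal--vertical contribution to $d(\mathrm{vol}_H)$ collapses exactly onto $\mathrm{tr}\,T^0_{\xi_s}$ (and hence drops out), which is where the complete trace-freeness of the Biquard torsion and the metricity of $\nabla$ are genuinely needed, and where a single sign error would be fatal. As a cross-check, and as a coordinate-free alternative, one can instead start from $d\Omega=2\sum_s\omega_s\wedge d\omega_s$, substitute the structure equation \eqref{strom} for $d\omega_s$, discard the $\omega_i\wedge\omega_j$ with $i\neq j$ (which vanish in dimension seven), and eliminate the remaining $\rho_s$- and $dS$-terms using the identities of Theorem~\ref{sixtyseven} that express $\rho_k(\cdot,\xi_i)$, $\rho_s(\xi_i,\xi_j)$ and $\xi_j(S)$ through the brackets $[\xi_i,\xi_j]$; this parallels the computations of \cite{IV1} but is heavier.
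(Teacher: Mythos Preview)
Your proof is correct, and it takes a genuinely different route from the paper's own argument.

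The paper proves $(a)\Leftrightarrow(b)$ by the method you describe only as a cross-check in your last paragraph: it writes $\Omega=3\omega_1\wedge\omega_1$, substitutes the structure equation \eqref{strom} for $d\omega_1$ (which brings in the Ricci $2$-forms $\rho_s$ and $dS$), and then evaluates $d\Omega$ on the various types of arguments. To conclude, the paper must invoke several identities from Theorem~\ref{sixtyseven}: that $\rho_k(I_iX,\xi_i)=g([\xi_i,\xi_j],X)$ to link the surviving $\xi_s\lrcorner\rho_t$ terms with the horizontal brackets, the relation $\rho_i(\xi_i,\xi_j)+\rho_k(\xi_k,\xi_j)=\tfrac12\xi_j(S)$ to kill the three-vertical component, the fourth line of \eqref{sixtyfour} to reduce the $(\xi_2,\xi_3)$-component, and the $I_2$-type decomposition of $\rho_2$ to kill the $\omega_1\wedge\rho_2$ term. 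By contrast, your approach goes straight to $\Omega=6\,\mathrm{vol}_H$ and computes $d(\mathrm{vol}_H)$ from $de^a(A,B)=-g([A,B],e_a)$: the $\Lambda^2H^*$-parts die for rank reasons, the mixed parts collapse to $\mathrm{tr}\,T^0_{\xi_s}=0$ by metricity of $\nabla$ and trace-freeness of the Biquard torsion, and only the $\Lambda^2V^*$-parts---exactly the horizontal components of $[\xi_i,\xi_j]$---survive. This is more elementary and self-contained: it uses only the basic properties of the Biquard connection (Theorem~\ref{biqcon}) and avoids the curvature machinery entirely. The paper's route, on the other hand, makes visible the precise way the Ricci $2$-forms control $d\Omega$, which is of independent interest and is re-used elsewhere (e.g.\ in Corollary~\ref{qchypo1} and Theorem~\ref{integr-g2}). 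Your worry about the ``bookkeeping'' step is unfounded: the only terms of $d(\mathrm{vol}_H)$ of type $\eta_s\wedge\mathrm{vol}_H$ come from the diagonal coefficients $de^a(e_a,\xi_s)$, and the signs do align to give $\sum_a g(T^0_{\xi_s}e_a,e_a)$ uniformly, exactly as you say.
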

\begin{proof}
In the case of dimension seven we have the identities $\omega_s\wedge\omega_p=\delta_{sq}vol_H$. Therefore, $\Omega=3\omega_1\wedge\omega_1$. Applying \eqref{strom} we get
\begin{equation}\label{7777}
d\Omega=6d\omega_1\wedge\omega_1=6\omega_1\wedge\rho_2\wedge\eta_3-6\omega_1\wedge\rho_3\wedge\eta_2+3dS\wedge\omega_1\wedge\eta_2\wedge\eta_3.
\end{equation}
For a 1-form $\beta$ we use the convention $I_s\beta(X)=-\beta(I_sX)$. We obtain from \eqref{7777} that the nonzero parts of $d\Omega$ are given by
 \begin{gather}\label{771}
 d\Omega(X,Y,Z,\xi_1,\xi_2)=6\omega_1\wedge(\xi_1\lrcorner\rho_3)(X,Y,Z);\quad d\Omega(X,Y,Z,\xi_1,\xi_3)=6\omega_1\wedge(\xi_1\lrcorner\rho_2)(X,Y,Z);\\\label{772}
 d\Omega(X,Y,Z,\xi_2,\xi_3)=-3\omega_1\wedge\Big[2(\xi_2\lrcorner\rho_2)+2(\xi_3\lrcorner\rho_3)-dS\Big](X,Y,Z)=6\omega_1\wedge I_3(\xi_2\lrcorner\rho_1)(X,Y,Z);\\\label{773}
 d\Omega(X,Y,\xi_1,\xi_2,\xi_3)=3\omega_1(X,Y)\Big[2\rho_2(\xi_1,\xi_2)+2\rho_3(\xi_1,\xi_3)+dS(\xi_1)\Big]=0;\\\label{774}
 d\Omega(X,Y,Z,U,\xi_3)=6\omega_1\wedge\rho_2(X,Y,Z,U)=g(\omega_1,\rho_2)vol_H=0,
 \end{gather}
 where we apply the last equation in \eqref{sixtyfour} to obtain the last equality in \eqref{772}.
 The second equality in \eqref{773} is precisely the second formula of the third line of \eqref{sixtyfour}. The last equality in \eqref{774} follows from the first formula in \eqref{sixtyfour} which says that the horizontal two form $\rho_2$ is of type (1,1) with respect to $I_2$ and therefore it is orthogonal to the 2-form $\omega_1$ which is of type (2,0)+(0,2) with respect to $I_2$.

  Assume that the fundamental four form is closed. Then \eqref{771} and \eqref{772} imply that the 1-forms $(\xi_s\lrcorner\rho_t)|_H$ vanish for $s\not= t$ which is equivalent the vertical distribution to be integrable due to the third line of \eqref{sixtyfour}.

 For the converse, let the vertical distribution be integrable, i.e. $(i_{\xi_t}\rho_s)|_H=0$ for $s\not=t$. Then the last formula in \eqref{sixtyfour} yields $(i_{\xi_s}\rho_s)|_H=\frac14dS|_H$. Substitute the latter into \eqref{772} to obtain the vanishing of this term which combined with \eqref{771}, \eqref{773} and \eqref{774}  yields   $d\Omega=0$. This  proves $a)\Leftrightarrow b)$. Finally, Proposition~\ref{qchypo} completes the proof.
\end{proof}

 Combining Proposition~\ref{qchypo} with  the main theorem of \cite{IV1} we obtain the following Corollary.
\begin{cor}\label{qchypo1}
a) A qc structure on a (4n+3)-dimensional manifold is $Sp(n)Sp(1)$-hypo if and only if it is qc Einstein
provided $n>1$.

b) In dimension seven,  a qc Einstein structure  
with constant qc scalar curvature is $Sp(1)Sp(1)$-hypo structure.

In  both cases we have the structure equations
\begin{equation}\label{strom1}
d\omega_i=\omega_j\wedge\alpha_k-\omega_k\wedge\alpha_j.
\end{equation}
\end{cor}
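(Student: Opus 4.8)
The plan is to derive Corollary~\ref{qchypo1} as an immediate combination of Proposition~\ref{qchypo}, the main theorem of \cite{IV1}, and Theorem~\ref{integr-closed}, together with a short computation to extract the structure equations \eqref{strom1}. For part a), in dimension $4n+3>7$, the main theorem of \cite{IV1} says that a qc structure has closed fundamental four form $d\Omega=0$ precisely when the torsion endomorphism of the Biquard connection vanishes, which by Theorem~\ref{sixtyseven} is exactly the qc Einstein condition. Proposition~\ref{qchypo} identifies $d\Omega=0$ with the $Sp(n)Sp(1)$-hypo condition. Chaining these two equivalences gives a). For part b), in dimension seven one invokes Theorem~\ref{integr-closed}: the $Sp(1)Sp(1)$-hypo condition is equivalent to integrability of the vertical distribution, which is equivalent to $d\Omega=0$. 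So one must check that a qc Einstein structure with constant qc scalar curvature has integrable vertical distribution. This follows from Theorem~\ref{sixtyseven}: qc Einstein forces the torsion endomorphism to vanish, hence by the third line of \eqref{sixtyfour} one has $g(T(\xi_i,\xi_j),X)=-g([\xi_i,\xi_j],X)$ and the vertical parts are governed by the Ricci forms $\rho_k$; since qc Einstein means $\rho_s=\nu_s\omega_s$ on $H$ with $\nu_s$ constant (using constancy of $S$), the relevant brackets $[\xi_i,\xi_j]$ have no horizontal component, i.e.\ $V$ is integrable. Then Theorem~\ref{integr-closed} (equivalence b)$\Leftrightarrow$c)) yields the $Sp(1)Sp(1)$-hypo property.

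For the structure equations \eqref{strom1}, the plan is to return to the general formula \eqref{strom},
\[
d\omega_i=\omega_j\wedge(\alpha_k+S\eta_k)-\omega_k\wedge(\alpha_j+S\eta_j)-\rho_k\wedge\eta_j+\rho_j\wedge\eta_k+\tfrac12dS\wedge\eta_j\wedge\eta_k,
\]
and simplify it under the qc Einstein hypothesis. First, $S$ is constant (automatic when $n>1$ by Theorem~\ref{sixtyseven}, assumed when $n=1$), so the $dS$ term drops. Next, for a qc Einstein structure the horizontal Ricci forms satisfy $\rho_s=\nu_s\omega_s$ with all $\nu_s$ equal to a fixed constant multiple of $S$ (as recalled after the definition of qc Einstein), and moreover the full $sp(1)$-connection simplifies: combining \eqref{coneform1} with the vanishing of torsion and the vertical formulas in \eqref{sixtyfour}, one obtains that $\alpha_i+S\eta_i$ restricted appropriately absorbs exactly the terms $\rho_k\wedge\eta_j-\rho_j\wedge\eta_k$. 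Concretely, one substitutes the qc Einstein values and checks that $\omega_j\wedge S\eta_k-\omega_k\wedge S\eta_j-\rho_k\wedge\eta_j+\rho_j\wedge\eta_k$ collapses; since $\rho_k=\nu\omega_k$ and the standard identity $\nu=$ (const)$\cdot S$ together with the precise constant from \cite{IMV} make the $\eta_j,\eta_k$-containing terms cancel, one is left with $d\omega_i=\omega_j\wedge\alpha_k-\omega_k\wedge\alpha_j$.

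I expect the only genuine content beyond bookkeeping to be verifying, in the seven-dimensional case, that \emph{constant} qc scalar curvature is precisely the extra input needed so that qc Einstein $\Rightarrow$ integrability of $V$ $\Rightarrow$ $d\Omega=0$; the subtlety is that in dimension seven qc Einstein alone does not force $S$ constant (unlike $n>1$), so this hypothesis cannot be dropped, and one should point to the chain \eqref{771}--\eqref{774} in the proof of Theorem~\ref{integr-closed} to see exactly where constancy of $S$ enters (it kills the $dS$ contribution to $d\Omega$). The derivation of \eqref{strom1} is then a direct substitution into \eqref{strom}; the main obstacle, such as it is, is simply keeping track of the normalization constant relating $\nu_s$ and $S$ so that the cancellation of the $\eta$-terms is exact rather than merely up to a constant. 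Everything else is a formal consequence of results already established in the excerpt.
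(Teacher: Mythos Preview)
Your overall strategy matches the paper's---combine Proposition~\ref{qchypo} with the main theorem of \cite{IV1} for part a), and then substitute into \eqref{strom} to extract \eqref{strom1}---but there is a genuine gap in how you handle the Ricci $2$-forms in dimension seven, and the same gap leaks into your derivation of \eqref{strom1}.

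The qc Einstein condition $\rho_s|_H=\nu_s\,\omega_s$ is a statement about the \emph{horizontal} restriction only. Integrability of $V$ is, via the third line of \eqref{sixtyfour}, equivalent to the vanishing of the \emph{mixed} components $\rho_k(\xi_i,X)$, and these are not determined by $\rho_s|_H$. Your sentence ``since qc Einstein means $\rho_s=\nu_s\omega_s$ on $H$ \ldots the relevant brackets $[\xi_i,\xi_j]$ have no horizontal component'' conflates the two. In the paper this is exactly the point where an external input is needed: Theorem~3.1 of \cite{IV} expresses $\rho_l(\xi_m,X)$ in terms of covariant derivatives of the torsion endomorphism and of $dS$, and under qc Einstein with constant $S$ both vanish, giving $\rho_l(\xi_m,X)=0$. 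For $n>1$ the paper instead uses the last clause of Theorem~\ref{sixtyseven} (integrability of $V$ is part of the conclusion there), which again feeds back through \eqref{sixtyfour} to give the mixed vanishing.

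The same issue appears in your cancellation argument for \eqref{strom1}. In \eqref{strom} the forms $\rho_j,\rho_k$ are the full $2$-forms on $TM$, not just their horizontal parts. The horizontal piece $\rho_l^{HH}=-S\omega_l$ does cancel $\omega_j\wedge S\eta_k-\omega_k\wedge S\eta_j$ as you say, but you also need $\rho_l(\xi_m,X)=0$ to kill the mixed contributions, and the identity $\rho_i(\xi_i,\xi_j)+\rho_k(\xi_k,\xi_j)=\tfrac12\xi_j(S)=0$ from \eqref{sixtyfour} (with $S$ constant) to kill the purely vertical coefficient of $\eta_1\wedge\eta_2\wedge\eta_3$. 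The paper packages these as the three identities in \eqref{closed} and then substitutes; your ``keeping track of the normalization constant'' is not the obstacle---the missing pieces are the mixed and vertical components of $\rho_l$, not a scalar factor.
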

\begin{proof}
If $n>1$ and $T^0=U=0$, then  Theorem~\ref{sixtyseven} implies
\begin{equation}\label{closed}
\rho_l(X,Y)=-S\omega_l(X,Y),  \quad \rho_l(\xi_m,X)=0,\quad
\rho_i(\xi_i,\xi_j)+\rho_k(\xi_k,\xi_j)=0,
\end{equation}
since $S$ is constant  and the vertical distribution is integrable \cite{IMV}. In dimension seven the second equality of \eqref{closed} is a consequence of Theorem~3.1 in \cite{IV} which expresses  $\rho_l(\xi_m,X)$ in terms of the covariant derivatives of the torsion and the differential of the qc scalar curvature.  Applying \eqref{closed} into \eqref{strom} we get \eqref{strom1}.
\end{proof}

\subsection{Construction of quaternionic K\"ahler structures using qc structures}
In this section we construct explicit quaternionic K\"ahler metrics on the product of a qc manifold with a real interval.
\begin{thrm}\label{qkmetric}
Let $(M,g,\mathbb Q)$ be a smooth qc Einstein manifold of dimension $4n+3$ 
and, in dimension seven, with  constant normalized qc scalar curvature $S$.
 For a suitable constant $a$, the manifold $M\times\mathbb{R}$ has a
quaternionic K\"ahler structure given by the following metric and
fundamental 4-form
\begin{equation}\label{e:general qK metric}
\begin{aligned}
&g=ug_H+(\frac12S u+au^2)(\eta_1^2+\eta_2^2+\eta_3^2)+\frac {1}{2(S u+2au^2)}(du)^2,\quad S u+2au^2>0,\\
&\Phi=F_1\wedge F_1 +F_2\wedge F_2 + F_3\wedge F_3,
\end{aligned}
\end{equation}
where locally
\begin{equation}\label{e:local qK forms}
\begin{aligned}
F_i(u)=u\omega_i+(a u^2+\frac12S u)\,\eta_j\wedge\eta_k-\frac 12
\eta_i\wedge du.
\end{aligned}
\end{equation}
The Ricci tensor of the quaternionic K\"ahler metric is  $Ric=-4(n+3)a g$.
\end{thrm}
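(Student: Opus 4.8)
The plan is to verify directly that the metric $g$ in \eqref{e:general qK metric} has holonomy contained in $Sp(n+1)Sp(1)$ by using Proposition~\ref{qevpro}: it suffices to exhibit the family $(\eta_s(t),\omega_s(t))$ of $Sp(n)Sp(1)$-hypo structures on $M$ whose lift \eqref{qhyp0} produces the stated $F_i$, and to check that this family solves the evolution equations \eqref{qevolunk}. The first move is to change variable from the coordinate $t$ on $\mathbb R$ to the ``radial'' variable $u$; comparing \eqref{qhyp0} with \eqref{e:local qK forms} forces $\omega_i(t)=u\,\omega_i$, $\eta_j(t)\wedge\eta_k(t)=(au^2+\tfrac12 Su)\,\eta_j\wedge\eta_k$, and $\eta_i(t)\wedge dt=\tfrac12\eta_i\wedge du$, so one sets $\eta_i(t)=\sqrt{au^2+\tfrac12 Su}\;\eta_i$ (consistent with $\omega_i(t)=g(t)(I_iX,Y)$ for the metric on $H$ scaled by $u$), and the relation between $t$ and $u$ is determined by $dt=\tfrac12(au^2+\tfrac12Su)^{-1/2}\,du$ up to the normalization that turns $(dt)^2$ into the coefficient $\frac{1}{2(Su+2au^2)}(du)^2$ displayed in the metric. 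One should record that the positivity condition $Su+2au^2>0$ is exactly what makes all these square roots real and the metric Riemannian.

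With the family identified, the core computation is to plug it into \eqref{qevolunk}. Here Corollary~\ref{qchypo1} is the crucial input: since $(M,g,\mathbb Q)$ is qc Einstein (with $S$ constant, automatically if $4n+3>7$ and by hypothesis if $4n+3=7$), it is $Sp(n)Sp(1)$-hypo, so $d\Omega=d\Omega_{\mathbb Q}=0$, and moreover the reduced structure equations \eqref{strom1}, $d\omega_i=\omega_j\wedge\alpha_k-\omega_k\wedge\alpha_j$, hold, together with \eqref{streq} which now reads $2\omega_i=d\eta_i+\eta_j\wedge\alpha_k-\eta_k\wedge\alpha_j+S\eta_j\wedge\eta_k$. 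Using these, I would compute both sides of \eqref{qevolunk}: the left side $\partial_u\Omega_{\mathbb Q}(u)$ is a polynomial in $u$ with coefficients built from $\omega_i\wedge\omega_i$, $\omega_i\wedge\eta_j\wedge\eta_k$ and products of $\eta$'s, while the right side is $\frac{du}{dt}$ (the chain-rule factor from switching $\partial_t$ to $\partial_u$) times $d$ applied to $6\eta_1(u)\eta_2(u)\eta_3(u)+2\sum_i\omega_i(u)\wedge\eta_i(u)$. Expanding the exterior derivative on the right with \eqref{streq} and \eqref{strom1} and collecting powers of $u$, one obtains an identity between polynomials in $u$; matching coefficients pins down the single free constant $a$ (this is the ``suitable constant $a$'' of the statement — one expects two roots, corresponding to the two branches, and in the seven-dimensional or $S<0$ situations one of them is the geometrically relevant one) and simultaneously verifies the equation. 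Finally, the statement that $(F_s(u))$ is genuinely quaternionic K\"ahler rather than merely ``closed fundamental four-form'' follows for $n\geq 2$ from the last clause of Proposition~\ref{qevpro}; in dimension seven ($n=1$) one must argue separately that $d\Phi=0$ plus \eqref{qkcon} holds, and here I would check $dF_s\equiv 0 \bmod\{F_i,F_j,F_k\}$ directly from \eqref{e:local qK forms}, \eqref{streq}, \eqref{strom1}, which is where the extra hypothesis $S=\text{const}$ in dimension seven is used.

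The Ricci tensor computation $Ric=-4(n+3)a\,g$ is the last step. A quaternionic K\"ahler manifold of real dimension $4(n+1)$ is Einstein, so it is enough to compute the scalar curvature, equivalently the single Einstein constant, and the cleanest way is to evaluate it at one convenient point/value of $u$: one may take $u$ where the algebra simplifies, use \eqref{lilc} (the Koszul formula for left-invariant-type frames, valid here since the whole construction is frame-algebraic once one works in the coframe $\{\sqrt{u}\,e^\alpha,\ \eta_i(u),\ dt\}$), and compute, say, $Ric(\partial_t,\partial_t)$ or $Ric$ on a horizontal vector, matching it against $-4(n+3)a$ times the corresponding component of $g$. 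Alternatively, and perhaps more transparently, one can determine the constant by a scaling/limiting argument: the reduced Biquard structure equations make $M\times\mathbb R$ locally homogeneous in the examples, and comparing with the known quaternionic K\"ahler model (e.g. the non-compact symmetric space $Sp(n+1,1)/Sp(n+1)Sp(1)$, which arises for the appropriate sign of $S$ and value of $a$) fixes the normalization.

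The main obstacle I anticipate is the bookkeeping in matching coefficients of $u$ in the evolution equation \eqref{qevolunk}: one has to be careful that $d$ on the right acts only in the $M$-directions, that $S$ and $a$ are the only constants while $u$ varies, and that the $\alpha_i$ terms appearing through \eqref{streq}--\eqref{strom1} cancel in the right combinations (they must, since $\Omega_{\mathbb Q}$ is $Sp(n)Sp(1)$-invariant and hence $\alpha$-free, but seeing this explicitly requires the identities $\omega_j\wedge\alpha_k-\omega_k\wedge\alpha_j$ telescoping against $\eta_j\wedge\eta_k\wedge\alpha_i$ contributions). A secondary subtlety is the dimension-seven case throughout — $U=0$ there, Corollary~\ref{qchypo1}(b) needs $S$ constant, and Proposition~\ref{qevpro}'s quaternionic-K\"ahler conclusion is stated only for $n\geq 2$ — so the $n=1$ claim of the theorem genuinely requires the separate verification of \eqref{qkcon} sketched above rather than an appeal to the closed-four-form criterion.
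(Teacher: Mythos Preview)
Your overall architecture is sound and parallels the paper: establish $d\Phi=0$, then in dimension seven check separately that $\langle F_1,F_2,F_3\rangle$ is a differential ideal so that Swann's criterion \eqref{qkcon} applies. The paper's route is slightly more direct than yours: rather than invoking Proposition~\ref{qevpro} and the evolution equation \eqref{qevolunk}, it starts with undetermined scalar functions $f(t),h(t)$ in the ansatz $F_i(t)=f\,\omega_i+h^2\,\eta_j\wedge\eta_k-h\,\eta_i\wedge dt$, expands $d\Phi$ directly using \eqref{streq} and \eqref{strom1}, and reads off that $d\Phi=0$ forces $h=\tfrac12 f'$ together with the ODE $ff''-(f')^2+Sf=0$. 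Solving this ODE (via $v=-\ln f$) produces $a$ as an \emph{integration constant}, and the substitution $u=f$ yields the displayed metric and two-forms. Your verification-style approach through \eqref{qevolunk} is equivalent and would unwind to the same computation; the paper simply skips the intermediary of Proposition~\ref{qevpro}. For the seven-dimensional step the paper does exactly what you propose: it computes $dF_i$ explicitly from \eqref{streq}, \eqref{strom1} and shows the residual terms vanish once the ODE holds.

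One point you should correct: the constant $a$ is not ``pinned down'' by matching coefficients. It is a free integration constant, and ``suitable'' in the statement refers only to the positivity $Su+2au^2>0$ on some $u$-interval. If you carry out your computation you will find the evolution equation (equivalently $d\Phi=0$) holds identically in $a$, not for a distinguished value; there are no ``two roots'' to choose between. The paper, incidentally, does not carry out the Ricci computation either --- the Einstein constant $-4(n+3)a$ is simply asserted.
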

\begin{proof}
Let $h$ and $f$ be some functions of the unknown $t$ . Consider the 2-forms defined by
\begin{equation}\label{fff}F_i(t)=f(t)\omega_i+h^2(t)\eta_j\wedge \eta_k-h(t)\eta_i\wedge dt.
\end{equation}
Let $\Phi$ be given with the second equation  in \eqref{e:general qK metric}. A direct
calculation applying \eqref{streq} and \eqref{strom1} shows that ($\Sigma_{(ijk)}$ means the cyclic sum)
\begin{equation*}
d\Phi=\Sigma_{(ijk)}\left [ \left (
(f^2)'-4fh\right)\omega_i\wedge\omega_i\wedge dt+\left ( 2\left
(fh^2 \right )' +2S fh-12h^3 \right
)\omega_i\wedge\eta_j\wedge\eta_k\wedge dt \right ].
\end{equation*}
Thus, if we take $h=\frac 12 f'$ we come to
$$d\Phi=f'\Sigma_{(ijk)}(-f'^2+f f''+S
f)\omega_i\wedge\eta_j\wedge\eta_k\wedge dt,$$
 which shows that
$\Phi$ is closed exactly when
\begin{equation}\label{solqk7}
ff''-f'^2+S f=0, \qquad h=\frac 12 f'.
\end{equation}
With the help of the substitution $v=-\ln f$ we see that $\left (
\frac {dv}{dt}\right )^2=2S e^v +4a$ for any constant $a$. This
shows that $\left(  \frac {dt}{df}  \right)^2 =\left( \frac {dt}{dv}
\right )^2 \left( \frac {dv}{df}\right )^2=\frac {1}{2(S f +
2af^2)} > 0$ and $h^2=\frac12S f + af^2$. Renaming $f$ to $u$ gives the
quaternionic structure in the local form \eqref{e:local qK forms}
and the metric in \eqref{e:general qK metric}.

In dimension seven, in order to see that
$<F_1, F_2, F_3>$ is a differential ideal when equations \eqref{solqk7} hold,  we need to compute the
differentials $dF_i$. Using \eqref{streq} and \eqref{strom1} we obtain taking the exterior derivative of \eqref{fff} that
\begin{multline}\label{clideal}
dF_i=\Big(\alpha_k+\frac{2h^2}f\eta_k\Big)\wedge F_j-\Big(\alpha_j+\frac{2h^2}f\eta_j\Big)\wedge F_k\\+(f'-2h)\omega_i\wedge dt +(2hh'+hS-\frac{4h^3}4)\eta_j\wedge\eta_k\wedge dt.
\end{multline}
An easy application of \eqref{solqk7} anihilates the second line of \eqref{clideal} which proves that the defined structure is
quaternionic K\"ahler due to  Swann's theorem \cite{Sw} recalling that we have also \eqref{solqk7}, i.e., $\Phi$ is a closed form.
\end{proof}

Using the above theorem we obtain explicit  quaternionic K\"ahler structures based on examples of qc structures with vanishing torsion. We turn to their description in the following subsection.

\subsection{Quatenionic K\"ahler metrics based on qc  Einstein structure with zero qc scalar curvature}

When the qc scalar curvature vanishes, $S=0$, we let $a=b^2$, $ u=e^{2b\sigma}$ in the above Theorem to obtain the next Corollary.
\begin{cor}\label{qkmetricflat}
Let $(M,g,\mathbb Q)$ be a smooth qc Einstein manifold of dimension $4n+3$ with vanishing  qc scalar curvature.  For any non-zero constant $b$, the manifold $M\times\mathbb{R}$ has a quaternionic K\"ahler structure given by the following metric and
fundamental 4-form
\begin{equation}\label{e:general qK metric-f}
\begin{aligned}
&g=e^{2b\sigma}g_H+b^2e^{4b\sigma}(\eta_1^2+\eta_2^2+\eta_3^2)+(d\sigma)^2,\\
&\Phi=F_1\wedge F_1 +F_2\wedge F_2 + F_3\wedge F_3,
\end{aligned}
\end{equation}
where locally
\begin{equation}\label{e:local qK forms-f}
\begin{aligned}
F_i=e^{2b\sigma}\omega_i+b^2e^{4b\sigma}\,\eta_j\wedge\eta_k-be^{2b\sigma}
\eta_i\wedge d\sigma.
\end{aligned}
\end{equation}
In particular, the quaternionic K\"ahler metric on $M\times\mathbb{R}$ is complete if  the metric on $M$ is complete.
\end{cor}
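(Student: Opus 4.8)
The plan is to obtain Corollary~\ref{qkmetricflat} as a direct specialization of Theorem~\ref{qkmetric}. First I would set $S=0$ in the formulas \eqref{e:general qK metric} and \eqref{e:local qK forms}. The metric then becomes $g=ug_H+au^2(\eta_1^2+\eta_2^2+\eta_3^2)+\frac{1}{4au^2}(du)^2$ with the positivity condition $2au^2>0$, which forces $a>0$; so writing $a=b^2$ for a real nonzero constant $b$ is legitimate and only mildly restrictive (the sign of $b$ is immaterial). The remaining task is the change of variable: I would substitute $u=e^{2b\sigma}$, so that $du=2b\,e^{2b\sigma}\,d\sigma$ and hence $\frac{1}{4au^2}(du)^2=\frac{1}{4b^2e^{4b\sigma}}\cdot 4b^2e^{4b\sigma}(d\sigma)^2=(d\sigma)^2$; also $au^2=b^2e^{4b\sigma}$. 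Plugging these into \eqref{e:general qK metric} and \eqref{e:local qK forms} yields exactly \eqref{e:general qK metric-f} and \eqref{e:local qK forms-f} (using $a u^2+\frac12 S u = b^2 e^{4b\sigma}$ and $\frac12\eta_i\wedge du = b\,e^{2b\sigma}\,\eta_i\wedge d\sigma$). The quaternionic K\"ahler property is then inherited verbatim from Theorem~\ref{qkmetric}, as is the Einstein constant (here $Ric=-4(n+3)b^2 g$), so no further differential-geometric work is needed.

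The only genuinely new assertion is the completeness statement, and that is where I would spend the real effort. I would argue that $g$ on $M\times\mathbb{R}$ is complete whenever $g_H$ (extended by the $\eta_s$ to the qc metric on $M$, as in Section~2) is complete on $M$. The key observation is that $\sigma$ ranges over all of $\mathbb{R}$, and along the $\sigma$-direction the metric is exactly $(d\sigma)^2$, so rays in the $\sigma$-direction have infinite length in both directions; thus no finite-length curve can escape to $\sigma\to\pm\infty$. For the behavior in the $M$-directions one notes that for $\sigma$ bounded, the coefficients $e^{2b\sigma}$ and $b^2e^{4b\sigma}$ are bounded above and below away from zero, so on any slab $\{|\sigma|\le C\}$ the metric $g$ is uniformly equivalent to $g_{\mathrm{prod}}=g_M+(d\sigma)^2$, which is complete by completeness of $(M,g_M)$ and of $\mathbb{R}$. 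A Cauchy sequence (or a finite-length curve that is not contained in any such slab) must have bounded $\sigma$-coordinate precisely because the $(d\sigma)^2$ term controls $\sigma$-variation by arc length; hence one is reduced to the slab case. I would make this precise either via the Hopf--Rinow theorem (every bounded closed set is compact, using that a slab over a compact subset of $M$ is compact) or by directly showing every $g$-geodesic extends for all time.

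I would present the completeness argument as follows: let $\gamma(s)=(p(s),\sigma(s))$ be a unit-speed $g$-geodesic defined on a maximal interval; since $g(\dot\gamma,\dot\gamma)=1$ implies $\dot\sigma(s)^2\le 1$, the function $\sigma(s)$ is $1$-Lipschitz, so over any finite parameter interval $\sigma$ stays in a bounded set $[-C,C]$; on $M\times[-C,C]$ the metric $g$ is bi-Lipschitz equivalent to the complete product metric $g_M+(d\sigma)^2$, so $\gamma$ cannot leave every compact set in finite parameter time, and therefore the maximal interval is all of $\mathbb{R}$. By Hopf--Rinow, $(M\times\mathbb{R},g)$ is complete.

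The main obstacle I anticipate is not conceptual but a matter of care: making the bi-Lipschitz comparison on slabs fully rigorous requires knowing that the $\eta_s$-part and the $g_H$-part together give a genuine (complete) Riemannian metric on $M$ — which is exactly the extended qc metric discussed after \eqref{bi1} — and that the cross-terms in $g$ (there are none, since $g$ is block-diagonal in $\{g_H,\ \eta_s^2,\ d\sigma^2\}$) do not spoil the comparison. Once one records that $g$ is block diagonal with each block uniformly equivalent to the corresponding block of $g_M+(d\sigma)^2$ on slabs, the argument closes cleanly. I would also remark that the restriction $b\neq0$ is essential: $b=0$ degenerates $g$ and is excluded already by the positivity condition $Su+2au^2>0$ inherited from Theorem~\ref{qkmetric}.
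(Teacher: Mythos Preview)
Your derivation of the quaternionic K\"ahler structure is exactly the paper's: set $S=0$, write $a=b^2$, substitute $u=e^{2b\sigma}$ in Theorem~\ref{qkmetric}, and read off \eqref{e:general qK metric-f}--\eqref{e:local qK forms-f}. For completeness the paper simply invokes the warped-product criterion in O'Neill, whereas you give a direct slab argument (the $(d\sigma)^2$ term forces $\sigma$ to be $1$-Lipschitz along unit-speed curves, and on any slab $M\times[-C,C]$ the metric is bi-Lipschitz to the complete product $g_M+(d\sigma)^2$); your argument is a self-contained unpacking of the same idea and is arguably more transparent here since $g$ is a \emph{multiply} warped product rather than a single warped product, so the O'Neill citation requires a small adaptation anyway.
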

\begin{proof}
The completeness follows similarly to the case of a warped product with strictly positive warping function, see \cite{ONeill}
\end{proof}

\subsubsection{Quaternionic K\"ahler metrics from the quaternionic
Heisenberg group} Consider the $(4n+3)$ - dimensional
quaternionic Heisenberg group $\mathbb{G}^n$, viewed as a qc
structure. According to \eqref{e:general qK metric-f}, the metric
\begin{equation}\label{e:aqKahler}
g=e^{2b\sigma}\left ((e^1)^2+\dots +(e^{4n})^2 \right )+b^2e^{4b\sigma}\left ( (\eta_1)^2+(\eta_2)^2+(\eta_3)^2
 \right )+d\sigma^2
\end{equation}
is a complete quaternionic K\"ahler metric in dimensions $4n+4$ with
$n\geq 1$.
 The Einstein constant is negative and equal to
$ -4(n+3)b^2$.
 This complete Einstein
 metric has  been found in dimension  eight on a solvable Lie group as an Einstein metric starting with  a $T^3$ bundle
 over $T^4$ in \cite[equation (148)]{GLPS}. Thus, the Einstein metric in dimension eight discovered in \cite{GLPS} is
 in fact a quaternionic K\"ahler metric. Similarly to the eight dimensional case, the metric \eqref{e:aqKahler}
 is a left invariant metric on a $4n+4$ dimensional Lie group. In order to see this we use the structure equation
 \eqref{4n+3heis} giving $de^i=0$, hence the one forms $\tilde e^i =e^{b\sigma}e^i$, $\tilde \eta_i=e^{2b\sigma}\eta_i$ and $d\sigma$
 define a $4n+4$-dimensional Lie algebra
 \begin{equation*}
  d\tilde e^i=-b\tilde e^i\wedge d\sigma, \qquad d\tilde \eta_i=2\tilde \omega_i -2b\tilde\eta_i\wedge d\sigma,
 \end{equation*}
where $\tilde \omega_i$ is obtained from $\omega_i$ by replacing $e^i$ with $\tilde e^i$.

 The explicit description of the quaternionic K\"ahler metric can be obtained from the qc structure of the quaternionic Heisenberg group (using the form of \cite{IMV}),
\begin{equation}
\begin{aligned}\label{e:Heisenbegr ctct forms}
\eta_1\ =\ \frac 12\ dx\ -\ x^\alpha d t^\alpha\  +\ t^\alpha d
x^\alpha\ -\ z^\alpha d y^\alpha \ +\ y^\alpha d z^\alpha,\\
 \eta_2\ =\ \frac 12\ dy\ -\ y^\alpha d t^\alpha\
+\ z^\alpha d x^\alpha\ + \ t^\alpha d y^\alpha \ -\ x^\alpha d z^\alpha,\\
\eta_3\ =\ \frac 12\ dz\ -\ z^\alpha d t^\alpha\  - \ y^\alpha d x^\alpha\ + \
x^\alpha d y^\alpha \ +\ t^\alpha d z^\alpha,
\end{aligned}
\end{equation}
with summation over $\alpha=1,\dots,n$. The horizontal forms $e^i$ are $d t^\alpha$, $dx^\alpha$, $dy ^\alpha$ and $dz ^\alpha$.

\subsection{Quatenionic K\"ahler metrics based on a qc Einstein structure with negative qc scalar curvature}

\vskip.1truein

Let us consider a qc Einstein structure with a negative qc scalar
curvature. Accordingly, we let $S=-k^2<0$ and also replace $a$ in
Theorem \ref{qkmetric} with $\frac 12 b^2k^2>0$. With this
notation and in terms of the coordinate $\sigma$, $u=\frac {1}{2b^2}
(1+\cosh \sigma)$, we have $Su+2au^2=k^2(b^2u^2-u)=\frac {k^2}{4b^2}
\sinh^2 \sigma$ and the metric \eqref{e:general qK metric} takes the form
\begin{equation}\label{e:QK subR}
g=\frac {1}{2b^2} (1+\cosh \sigma)\ g_H+\frac {k^2}{8b^2} \sinh^2 \sigma\ (\eta_1^2+\eta_2^2+\eta_3^2)+\frac {1}{2k^2b^2}\,d\sigma^2,
\end{equation}
defined on $\hat M  =M\times\mathbb{ R}$.  The Ricci tensor of $g$ is then $Ric=-2(n+3) b^2 k^2 g$. Notice that the above
metric degenerates only when $\sigma=0$, but defines a sub-Riemannian
metric on the distribution $\hat H$ spanned by $H$ and $\frac
{\partial}{\partial \sigma}$ since $(1+\cosh \sigma)\geq 2$ and $\hat H$
generates the whole tangent space. From the formula for $g$ it is
apparent that if  $v$ is any tangent vector to $M\times\mathbb{
R}$ then $g(v,v)\geq |d\sigma(v)|^2$ and $g(v,v)\geq g_H(v,v)$.
Furthermore, if $\gamma$ is a horizontal curve on
$M\times\mathbb{ R}$, i.e., $\dot{\gamma}\in \hat H$, then its
projection on $M$ is also  horizontal curve.

Thus, the lengths of the projections on $\mathbb{R}$ and $M$ of
any horizontal curve on $M\times\mathbb{ R}$ are less than the
length of the horizontal curve on $\hat M$.  Let $(p_n,\sigma_n)\in
\hat M$ be a Cauchy sequence in the sub-Riemannian metric $g$.
From the argument so far we see  that the sequences $\sigma_n\in
\mathbb{R}$  and $p_n\in M$ are Cauchy sequences in the metric
$d\sigma^2$ and the sub-Riemannian metric  $g_H$ on $M$, respectively.
It follows that if the sub-Riemannian metric $g_H$ is complete
then the  sub-Riemannian metric  $g$ on $\hat M$ is complete. As
far as completeness of sub-Riemannian metrics is concerned, it is
useful to have in mind the result of \cite[Theorem 7.4]{Str}
according to which if a sub-Riemannian metric has a  Riemannian
contraction which is complete, then the sub-Riemannian metric is
also complete. In particular, if $g_H+ \eta_1^2+\eta_2^2+\eta_3^2$
is a complete metric on $M$, then $g_H$ defines a complete
sub-Riemannian metric on $M$ and $g$ defines a complete
sub-Riemannian metric on $M\times\mathbb{ R}$.

Next, we give some examples.

\subsubsection{Explicit quaternionic K\"ahler metrics from the zero-torsion qc-flat qc structure on $\mathfrak {l_1}$.}

As example of the above construction  we consider the Lie group $ {L_1}$ defined by the structure equations
\eqref{ex11}, which can be described  in local coordinates
$\{t,x,y,z,x_5,x_6,x_7\}$ as follows
\begin{equation}\label{loccoord}
\begin{aligned}
& e^1=-dt,\\
& e^2={\frac 12}\,{x_6}\,dx+{\frac 12}\,{x_5}\cos x\,dy
 +({\frac 12}\,{x_6}\cos y+{\frac 12}{x_5}\sin y\sin x)\,dz-{\frac 12}\,{x_7}\,dt+{\frac 12}\,d{x_7},\\
& e^3=-{\frac 12}\,{x_7}\,dx+{\frac 12}\,{x_5}\sin x\,dy
+(-{\frac 12}\,{x_7}\cos y-{\frac 12}{x_5}\sin y\,\cos x)\,dz-{\frac 12}\,{x_6}\,dt+{\frac 12}\,d{x_6},\\
& e^4=(-{\frac 12}{x_7}\cos x\,-{\frac 12}{x_6}\sin x\,)\,dy
-{\frac 12}\sin y\,(-{x_6}\cos x+{x_7}\sin x)\,dz-{\frac 12}\,{x_5}\,dt+{\frac 12}\,d{x_5},\\
& \eta_1=e^5=-{x_6}\,dx+(-{x_5}\cos x-2\sin x)\,dy\\
&\hskip1.7truein
+(-{x_6}\cos y-\sin y\sin x\,{x_5}+2\sin y\cos x)\,dz+{x_7}\,dt-d{x_7},\\
& \eta_2=e^6={x_7}\,dx+(2\cos x-{x_5}\sin x)\,dy\\ &\hskip1.7truein
 +({x_7}\cos y+2\sin y\sin x+{x_5}\sin y\,\cos x)\,dz+{x_6}\,dt-d{x_6},\\
& \eta_3=e^7=-2\,dx+(\cos x\,{x_7}+{x_6}\sin x)\,dy\\
&\hskip1.7truein +(-2\cos y+{x_7}\sin y\sin x-{x_6}\sin y\,\cos
x)\,dz+{x_5}\,dt-d{x_5}.
\end{aligned}
\end{equation}
In this case $S=-\frac12$ in \eqref{e:general qK metric}. According to
\eqref{e:QK subR}, the
corresponding quaternionic  K\"ahler metric is
\begin{equation}\label{qknew8}
g=\frac {1}{2b^2} (1+\cosh \sigma)\ g_H+\frac {1}{16b^2} \sinh^2 \sigma\
(\eta_1^2+\eta_2^2+\eta_3^2)+\frac {1}{b^2}\,d\sigma^2,
\end{equation}
The quaternionic K\"ahler two forms are
\begin{equation*}
F_i(\sigma)= \frac {1}{2b^2} (1+\cosh \sigma)\omega_i+\frac {1}{16b^2}
\sinh^2 \sigma\,\eta_j\wedge\eta_k-\frac 12 \eta_i\wedge d\sigma.
\end{equation*}
The Ricci tensor is given by $$Ric=-4b^2g.$$  The metric \eqref{qknew8} seems to be a new explicit quaternionic
K\"ahler metric.

\subsubsection{Explicit quaternionic K\"ahler metrics from the zero-torsion qc-non-flat qc structure on $\mathfrak {l_2}$ }
Consider the simply connected connected Lie group corresponding to the algebra $\mathfrak {l_2}$  defined by the structure equations
\eqref{ex2}.  This group can be described in local coordinates $(x,y,z,t,\varphi,\theta,\psi )$ as follows.
\begin{equation}\label{group L2}
\begin{aligned}
& e_{{1}}= \,-2\,{\it dt}, \quad
e_{{2}}= \,{\it dx}-y{\it dz}+ \left( -2\,x+zy \right) {\it dt},\quad
e_{{3}}= \,{\it dz}-z{\it dt},\quad
e_{{4}}= \,{\it dy}-y{\it dt}\\
& e_{{5}}= \,-2\,{\it dx}+2\,y{\it dz}-2\, \left( -2\,x+zy \right) {\it dt}-4\,\sin \psi \, {\it d{\theta}}
\mbox{}+4\,\cos  \psi  \sin \theta \, {\it d{\varphi}}\\
& e_{{6}}= \,-4\,{\it dz}+4\,z{\it dt}-4\,\cos \psi\, {\it d{\theta}}-4\,\sin \psi \sin  \theta \, {\it d{\varphi}}\\
& e_{{7}}= \,-4\,{\it dy}+4\,y{\it dt}-4\,{\it d{\psi}}-4\,\cos  \theta \, {\it d{\varphi}},
\end{aligned}
\end{equation}
{where $\theta$, $\varphi$, $\psi$ are the Euler angles, $0<\theta < \pi$, $0 < \varphi < 2\pi$ and $0< \psi < 4\pi$.}
Recall, in this case $S=-\frac14$ in \eqref{e:general qK metric}. According to
{\bf (\ref{e:QK subR})},
the corresponding quaternionic  K\"ahler metric on $L_2\times \mathbb{R}$ is
\begin{equation}\label{qknew82}
g=\frac {1}{2b^2} (1+\cosh \sigma)\ g_H+\frac {1}{32b^2} \sinh^2 \sigma\
(\eta_1^2+\eta_2^2+\eta_3^2)+\frac {2}{b^2}\,d\sigma^2.
\end{equation}
The quaternionic K\"ahler 2-forms are expressed by
\begin{equation*}
F_i(\sigma)=\frac {1}{2b^2} (1+\cosh \sigma)\omega_i+\frac {1}{32b^2}
\sinh^2 \sigma\,\eta_j\wedge\eta_k-\frac 12 \eta_i\wedge d\sigma.
\end{equation*}
The Ricci tensor is given by
{\bf $$Ric=-2b^2g.$$}
The metric \eqref{qknew82} seems to be a new explicit quaternionic
K\"ahler metric.

\subsection{{Quaternionic K\"ahler metrics arising from a 3-Sasakian
structure}} Note that here the normalized scalar curvature is $S=2$.  The metric
\begin{equation*}
g= u g_H +(u+au^2)\left ( (\eta_1)^2+(\eta_2)^2+(\eta_3)^2
\right )+ \frac {1}{4(u+au^2)}du^2
\end{equation*}
is a quaternionic K\"ahler, and in the case of $a=0$ is the
hyper K\"ahler cone over the 3-Sasakian manifold. These metrics have
been found earlier in \cite[Theorem 5.2]{WM}.

\subsection{Non quaternionic K\"ahler structures with closed four form in dimension 8}
It is well known \cite{Sw} in dimension $4n$, $n>2$, the
condition that the fundamental 4-form is closed is equivalent to the
fundamental 4-form being parallel which is not true in dimension
eight. Salamon constructed in \cite{Sal} a compact example of
an almost quaternion hermitian manifold with closed fundamental
four form which is not Einstein, and therefore it is not
quaternionic K\"ahler.   We give below explicit  non-compact
example of that kind.

We consider seven dimensional qc structures with zero torsion endomorphism of the Biquard connection and constant qc scalar curvature $S$ satisfying the  structure equations
\begin{equation}\label{e:structure equation for qK}
d\eta_i = 2\omega_i + S\eta_j\wedge\eta_k.
\end{equation}
 Examples of
such manifolds are provided by the following qc manifolds: i) the
quaternionic Heisenberg group, where $S=0$;
 ii) any 3-Sasakian
manifold, where $S=2$ (see \cite{IV1} where it is proved that
these structure equations
characterize the 3-Sasakian qc manifolds); and iii) the zero
torsion qc-flat group $L_1$ defined in Theorem~\ref{m1} with the
structure equations described in \eqref{ex11}, where $S=-1/2$.
Our example are inspired by the following Remark.
\begin{rmrk}\label{r:dim seven QK}
In dimension seven, due to the relations $\omega_i\wedge\omega_j=0$,
$i\not= j$,  a more general evolution than the one used in the
proof of Theorem~\ref{qkmetric} can be considered, namely, let
\begin{equation}\label{genevolution}
\omega_s(t)=f(t)\omega_s, \qquad \eta_s(t)=f_s(t)\eta_s,
\end{equation}
where $f,f_1,f_2,f_3$ are smooth function of $t$. Using the
structure equations \eqref{e:structure equation for qK} one easily
obtains that the equation $d\Omega=0$ is satisfied and
\eqref{qevolunk} is equivalent to the system
\begin{equation}\label{erealqk}
\begin{aligned}
3f'-2(f_1+f_2+f_3)=0,\\
(ff_2f_3)'-S f(f_1-f_2-f_3)-6 f_1f_2f_3=0,\\
(ff_1f_3)'-S f(-f_1+f_2-f_3)-6 f_1f_2f_3=0,\\
(ff_1f_2)'-S f(-f_1-f_2+f_3) -6 f_1f_2f_3=0.
\end{aligned}
\end{equation}
On the other hand, $<F_1, F_2, F_3>$ is a differential ideal  if
and only if the following system holds
\begin{equation}\label{e:closed ideal qk}
f(f_if_j)'-f'f_if_j+2f_1f_2f_3-2f_if_j(f_i+f_j)+S f
f_if_j-S f f_k=0.
\end{equation}
This claim is a consequence of the fact that working mod $ <F_1, F_2,
F_3>$ we have
\[
dF_i=\frac {1}{f}\left
(f(f_if_j)'-f'f_if_j+2f_1f_2f_3-2f_i^2f_j-2f_if_j^2+S
ff_if_j-S f f_k \right )\eta_j\wedge\eta_k\wedge dt.
\]
 Taking
$f_1=f_2=f_3=h$ in \eqref{erealqk} we come to the
case considered in Theorem \ref{qkmetric}.
\end{rmrk}

The system \eqref{erealqk} can be integrated completely when $S=0$.
This is achieved by introducing the new variable $du=f_1f_2f_3dt$,
which allows to determine $ff_if_j=6(u+a_k)$, where $a_k$ is a
constant. Thus
$$f_s=\frac {f}{6(u+a_s)} \frac {du}{dt}.$$ With
the help of these three equations and the first equation of
\eqref{erealqk} we come to $$\frac {9}{f}\frac {df}{dt}=\frac
{du}{dt}\left (\frac {1}{u+a_1} + \frac {1}{u+a_2} + \frac
{1}{u+a_3}\right ),$$ hence $$f^9=C^9(u+a_1)(u+a_2)(u+a_3)$$ for some
constant $C$. Now, the equations $ff_if_j=6(u+a_k)$ and the
definition of $u$ yield
$$f_i=\sqrt{\frac 6C} \left ( \frac {(u+a_j)^4(u+a_k)^4}{(u+a_i)^5} \right )^{1/9}, \quad
dt=\left ( C/6 \right )^{3/2}\frac {du}{\left
((u+a_1)(u+a_2)(u+a_3)\right )^{1/3}}.
$$
when we impose also the
system \eqref{e:closed ideal qk}, in which we substitute
$2(f_i+f_j)=3f'-2f_k$ and $f(f_if_j)'=6f_1f_2f_3-\frac 23 \left (
f_1+f_2+f_3 \right )f_if_j$ (using the equations of
\eqref{erealqk} and $\tau=0$), we see that
\[
3fdF_i={10f_jf_k}\left ( 2f_k-f_i-f_j \right
)\eta_j\wedge\eta_k\wedge dt\qquad mod \ <F_i, F_j, F_k>.
\]
Thus, $d\Phi=0$ and  $<F_1, F_2, F_3>$ is  a differential ideal if
and only if $f_1=f_2=f_3$ which yield the next Theorem.
\begin{thrm}
The  metric  on the product of the seven dimensional quaternionic
Heisenberg group with the  real line defined  by
\begin{multline}\label{e:QK using H general case}
g=C \, \left ((u+a_1)(u+a_2)(u+a_3)\right )^{1/9}\, (dx_1^2+dx_2^2+dx_3^2+dx_4^2) + \\
\frac {6}{C}\left ( \frac {(u+a_2)^8(u+a_3)^8}{(u+a_1)^{10}} \right
)^{1/9}(dx_5+2x_1dx_2+x_3dx_4)^2 +\frac {6}{C}\left ( \frac
{(u+a_3)^8(u+a_1)^8}{(u+a_2)^{10}} \right
)^{1/9}(dx_6+2x_1dx_3+x_4dx_2)^2\\+ \frac {6}{C}\left ( \frac
{(u+a_1)^8(u+a_2)^8}{(u+a_3)^{10}} \right
)^{1/9}(dx_7+2x_1dx_4+x_2dx_3)^2 + \left (\frac {C}{6}\right
)^3\frac {du^2}{\left ((u+a_1)(u+a_2)(u+a_3)\right )^{2/3}},
\end{multline}
where $a_1, a_2$ and $a_3$ are three constants, not all of them equal
to each other, supports { an almost quaternion} hermitian structure
which has closed fundamental form, but is not quaternionic K\"ahler.
\end{thrm}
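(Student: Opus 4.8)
The plan is to exhibit the metric~(\ref{e:QK using H general case}) as one member of the one–parameter family of almost quaternion hermitian structures obtained by the generalized evolution of Remark~\ref{r:dim seven QK} applied to the seven dimensional quaternionic Heisenberg group $\mathbb{G}^1$, and then to detect the failure of the quaternionic K\"ahler condition via Swann's theorem. First I would recall that the standard left invariant qc structure on $\mathbb{G}^1$ satisfies~(\ref{e:structure equation for qK}) with $S=0$, i.e. $d\eta_i=2\omega_i$ and hence $d\omega_i=0$, cf.~(\ref{4n+3heis}). On $M^8:=\mathbb{G}^1\times\mathbb{R}$ I would set, following~(\ref{genevolution}),
\[
F_i(t)=f(t)\,\omega_i+f_j(t)f_k(t)\,\eta_j\wedge\eta_k-f_i(t)\,\eta_i\wedge dt ,
\]
with $f,f_1,f_2,f_3$ smooth positive functions of $t$; for each fixed $t$ the pair $(f\omega_s,f_s\eta_s)$ is an $Sp(1)Sp(1)$ structure on $\mathbb{G}^1$, so the $F_i(t)$ form an almost quaternion hermitian structure on $M^8$ with compatible metric $g=f\,g_H+\sum_{s=1}^3f_s^2\,\eta_s^2+dt^2$ and fundamental four form $\Phi=\sum_iF_i\wedge F_i$. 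Using the seven dimensional relations $\omega_i\wedge\omega_j=\delta_{ij}\,vol_H$ together with $d\eta_i=2\omega_i$, the horizontal four form $\Omega_{\mathbb Q}(t)$ stays closed for every $t$, so by the argument in Proposition~\ref{qevpro} and the computation in Remark~\ref{r:dim seven QK} the closedness $d\Phi=0$ is equivalent to the evolution system~(\ref{erealqk}) specialized to $S=0$.

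Next I would integrate~(\ref{erealqk}) with $S=0$ exactly as in the discussion preceding the statement: introducing $u$ with $du=f_1f_2f_3\,dt$ yields the three first integrals $ff_if_j=6(u+a_k)$ for constants $a_1,a_2,a_3$; the first equation of~(\ref{erealqk}) then gives $\tfrac{9}{f}\tfrac{df}{du}=\tfrac{1}{u+a_1}+\tfrac{1}{u+a_2}+\tfrac{1}{u+a_3}$, so $f^9=C^9(u+a_1)(u+a_2)(u+a_3)$, and consequently
\[
f_i^2=\frac{6}{C}\Big(\frac{(u+a_j)^8(u+a_k)^8}{(u+a_i)^{10}}\Big)^{1/9},\qquad dt^2=\Big(\frac{C}{6}\Big)^3\frac{du^2}{\big((u+a_1)(u+a_2)(u+a_3)\big)^{2/3}} .
\]
Substituting these and the explicit left invariant coframe of $\mathbb{G}^1$ (with $e^i=dx_i$ and $\eta_s$ the Heisenberg contact forms, cf.~(\ref{4n+3heis}) and~(\ref{e:Heisenbegr ctct forms})) into $g=f\,g_H+\sum_sf_s^2\,\eta_s^2+dt^2$ reproduces precisely~(\ref{e:QK using H general case}); taking $C>0$ this is a genuine Riemannian metric on the open region where $u+a_s>0$ for all $s$. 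By construction $\Phi$ is closed.

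To see that the structure is not quaternionic K\"ahler I would invoke Swann's theorem (\cite{Sw,Sw2}; cf.~(\ref{qkcon})): in dimension eight an almost quaternion hermitian structure with closed fundamental four form is quaternionic K\"ahler if and only if $\langle F_1,F_2,F_3\rangle$ is a differential ideal. Computing $dF_i$ modulo this ideal and using~(\ref{erealqk}) with $S=0$ in the form $2(f_i+f_j)=3f'-2f_k$ and $f(f_if_j)'=6f_1f_2f_3-\tfrac{2}{3}(f_1+f_2+f_3)f_if_j$, one gets $3f\,dF_i\equiv 10\,f_jf_k(2f_k-f_i-f_j)\,\eta_j\wedge\eta_k\wedge dt$ modulo $\langle F_i,F_j,F_k\rangle$. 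Hence the ideal is differential exactly when $2f_k=f_i+f_j$ for every cyclic $(i,j,k)$, i.e. when $f_1=f_2=f_3$; but the explicit formula for $f_i^2$ shows that $f_1=f_2=f_3$ as functions of $u$ forces $a_1=a_2=a_3$, contradicting the hypothesis that $a_1,a_2,a_3$ are not all equal. Therefore~(\ref{e:QK using H general case}) carries an almost quaternion hermitian structure with closed fundamental form which is not quaternionic K\"ahler.

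The main obstacle is computational rather than conceptual: one must verify carefully that $d\Phi=0$ collapses to the system~(\ref{erealqk}) with $S=0$ — which rests on the dimension seven identities $\omega_i\wedge\omega_j=\delta_{ij}\,vol_H$ together with $d\omega_i=0$ on $\mathbb{G}^1$ — and then carry out the bookkeeping that matches the closed-form solution of that ODE system with the coordinate expression~(\ref{e:QK using H general case}); both steps are routine expansions already indicated in the text.
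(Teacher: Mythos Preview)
Your proposal is correct and follows essentially the same route as the paper: you use the generalized evolution of Remark~\ref{r:dim seven QK} on the Heisenberg group with $S=0$, integrate the system~(\ref{erealqk}) via $du=f_1f_2f_3\,dt$ to obtain the explicit coefficients of~(\ref{e:QK using H general case}), and then invoke Swann's criterion by computing $dF_i$ modulo $\langle F_1,F_2,F_3\rangle$ to see that the differential ideal condition forces $f_1=f_2=f_3$, hence $a_1=a_2=a_3$. This is exactly the argument the paper gives in the paragraphs surrounding the theorem.
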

\begin{rmrk}
Using \textsc{Mathematica} one can check that the metrics
\eqref{e:QK using H general case}  are Einstein exactly when
$f_1=f_2=f_3$, in which case they are quaternionic K\"ahler metrics.
\end{rmrk}

We note that one of the arbitrary constants in \eqref{e:QK using H
general case} is unnecessary since a translation of the  unknown $u$
does not change the metric.

Let us also remark that the quaternionic K\"ahler metric
\eqref{e:general qK metric-f} on the quaternionic Heisenberg group is obtained from the general family \eqref{e:QK
using H general case} by taking $\frac {6}{C^3}=b^2$ and
$v=e^{2bt}=Cu^{1/3}$ when the constants are the same $a_1=a_2=a_3=2b$
and we use $u+a_1$ as a variable, which is denoted also by $u$.

\subsection{Eight dimensional non quaternionic K\"ahler structures with fundamental forms generating a differential ideal}
If one takes a solution of the system \eqref{e:closed ideal qk}
which does not satisfy the system \eqref{erealqk}, one could obtain an almost quaternion
hermitian structure
such that $<F_1, F_2, F_3>$ is a differential ideal, yet, the  structure is not of a
quaternionic K\"{a}hler manifold  
(see also the paragraph after \cite[Corollary 2.4]{Macia}). For example, let $f\equiv 1$ and $S=0$ in the system \eqref{e:closed ideal qk} and introduce  the functions $u_i=\ln (f_jf_k)$. A small calculation turns the system \eqref{e:closed ideal qk} into the following  equivalent system with six equations
\begin{equation}\label{e:diff ideal sys}
f_i=e^{\frac 12 (u_j+u_k-u_i)}, \qquad f_i=\frac 14\left(\frac {du_j}{dt}+\frac {du_k}{dt}\right)
\end{equation}
where as before (and in what follows) $i,\, j,\, k$ denotes a positive permutation of $1,2,3$. Thus we have to solve
\[
\frac {d}{dt}\left( u_j+u_k\right) = 4 e^{\frac 12 (u_j+u_k-u_i)}
\]
Now, we let $v_i=e^{-\frac 12 u_i}$ so the above system becomes $\frac {d}{dt}\left(v_jv_k\right) =-2v_i$. Introducing $w_i=v_jv_k$ the latter equations take the form
\[
\frac {d}{dt}w_i^2 =-4\left( w_1w_2w_3\right)^{1/2}.
\]
Thus, the variables $w_i^2$ defer by additive constants, so we let $x=a_i-w_i^2$, $a_i=$const, where the variable $x$ satisfies $dt=g(x)dx$ with
\begin{equation}\label{e:g nearly quaternionic}
g(x)=\frac 14 \left( (a_1-x)(a_2-x)(a_3-x)\right)^{-1/4}.
\end{equation}
Solving back in terms of the wanted functions $f_i$ we see that
\begin{equation}\label{e:f_i nearly quaternionic}
f_i(x)=\frac {(a_j-x)^{1/4}(a_k-x)^{1/4}}{(a_i-x)^{3/4}}.
\end{equation}
In conclusion, the evolution $\omega_i(x)=\omega_i$, $\eta_i(x)=f_i(x)\eta_i$ leads to the metric
\[
g=g_H +  f_1^2\eta_1^2+f_2^2\eta_2^2+f_3^2\eta_3^2+ g(x)^2dx^2,
\]
where $F_i=\omega_i+f_jf_k\eta_j\wedge \eta_k-gf_i\eta_i\wedge dx$ and the functions $f_i$ and $g$ are defined in \eqref{e:g nearly quaternionic} and \eqref{e:f_i nearly quaternionic} respectively. The above metric supports an almost quaternion hermitian structure such that $<F_1, F_2, F_3>$ is a differential ideal but $g$ is not a  quaternionic K\"{a}hler metric. For the proof of the latter notice that if we set $f=1$ and $S=0$ the system \eqref{erealqk} has no solution with nowhere vanishing functions $f_i$ taking into account the first equation of the system.


\section{$Sp(1)Sp(1)$ structures and $Spin(7)$-holonomy metrics}
An $Sp(1)Sp(1)$ structure on a seven dimensional manifold
$M^7$
%
%
induces a $G_2$-form $\phi$  by
\begin{equation}\label{g2}
\phi=\omega_1\wedge\eta_1+\omega_2\wedge\eta_2+\omega_3\wedge\eta_3-\eta_1\wedge\eta_2\wedge\eta_3.
\end{equation}
Notice that the $G_2$-form \eqref{g2} is $Sp(1)Sp(1)$ invariant hence it is a well defined  global form on the qc-manifold $M$.
The Hodge dual $*\phi$ is
\begin{equation}\label{starg2}
*\phi=\frac{1}{2}\omega_1^2-(\omega_1\wedge\eta_2\wedge\eta_3+
\omega_2\wedge\eta_3\wedge\eta_1+\omega_3\wedge\eta_1\wedge\eta_2).
\end{equation}
Consider the family $(\eta_s(u),\omega_s(u))$ of $Sp(1)Sp(1)$ structures on $M^7$ depending on real parameter $u$, the corresponding $G_2$ form $\phi(u)$ and the $Spin(7)$-form $\Psi(u)$ on $M^7\times\mathbb R$ defined
by \cite{BH}
\begin{equation}\label{spin7}
\Psi(u)=F^-_1\wedge F^-_1+F^-_2\wedge F^-_2-F^-_3\wedge F^-_3=2\Big[*\phi(u)-\phi(u)\wedge du\Big],
\end{equation}
where the 2-forms $F^-_1,F^-_2,F^-_3$ are given  by
\begin{equation}\label{qhyp1}
\begin{aligned}
F^-_1=\omega_1-\eta_2\wedge\eta_3-\eta_1\wedge du,\quad
F^-_2=\omega_2-\eta_3\wedge\eta_1-\eta_2\wedge du,\quad
F^-_3=\omega_3+\eta_1\wedge\eta_2+\eta_3\wedge du.
\end{aligned}
\end{equation}
Following Hitchin, \cite{Hitt}, the $Spin(7)$-form $\Psi(u)$ is closed
{ (and so \cite{Fernandez} parallel with respect to the Levi-Civita connection)}
if and only if the $G_2$ structure is cocalibrated, $d*\phi=0$, and
the Hitchin flow equations $\partial_u(*\phi)=-d\phi$ are satisfied together with initial conditions $d*(\phi(u_0))=0$ at some point $u_0$,
i.e.
\begin{equation}\label{evolspin}
\partial_u(*\phi)=-d\phi, \qquad d*(\phi(u_0))=0.
\end{equation}

\subsection{Construction of $Spin(7)$-holonomy metrics using qc structures}
At this point we shall assume that $(M,g,\mathbb Q)$ is a qc manifold  of
dimension seven and investigate Hitchin's equations \eqref{evolspin} leading to $Spin(7)$-holonomy metrics.

Recall that in dimension seven the fundamental four form of the qc structure is given by $\Omega=3\omega_1\wedge\omega_1$.
The latter together with Lemma~\ref{closedqc}, \eqref{starg2} and Theorem~\ref{integr-closed} yield the next Theorem.
\begin{thrm}\label{integr-g2}
Let $(M,g,\mathbb Q)$ be seven dimensional qc manifold.  The following conditions are equivalent.
\begin{itemize}
\item[a).] The fundamental four form is closed, $d\Omega=0$;
\item[b).] The $G_2$-structure \eqref{g2} is cocalibrated.
\item[c).] The vertical distribution is integrable;
\end{itemize}
\end{thrm}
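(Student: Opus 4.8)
The plan is to prove Theorem~\ref{integr-g2} by simply assembling the pieces that are already in place, since all the analytic work has been done in Theorem~\ref{integr-closed}. The strategy has two independent halves: first establish the equivalence $a)\Leftrightarrow b)$ (closedness of the fundamental four form versus cocalibration of the $G_2$-form), and then invoke Theorem~\ref{integr-closed} to bridge to $c)$ (integrability of the vertical distribution). Indeed, Theorem~\ref{integr-closed} already gives $a)\Leftrightarrow b)_{\text{there}}\Leftrightarrow c)_{\text{there}}$ with $b)_{\text{there}}$ being the $Sp(1)Sp(1)$-hypo condition $d\Omega_{\mathbb Q}=0$; so it suffices to add the new link between $d\Omega=0$ and $d*\phi=0$.

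First I would compute $d*\phi$ directly from \eqref{starg2}. Writing $*\phi=\tfrac12\omega_1^2-(\omega_1\wedge\eta_2\wedge\eta_3+\omega_2\wedge\eta_3\wedge\eta_1+\omega_3\wedge\eta_1\wedge\eta_2)$ and using the dimension-seven identity $\omega_s\wedge\omega_p=\delta_{sp}\,vol_H$ (so that $\tfrac12\omega_1^2=\tfrac16\Omega$), I get
\begin{equation*}
d*\phi=\tfrac16\, d\Omega - d\bigl(\omega_1\wedge\eta_2\wedge\eta_3+\omega_2\wedge\eta_3\wedge\eta_1+\omega_3\wedge\eta_1\wedge\eta_2\bigr).
\end{equation*}
By Lemma~\ref{closedqc}, the second term on the right vanishes for any qc structure, so $d*\phi=\tfrac16\,d\Omega$. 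Hence $d*\phi=0$ if and only if $d\Omega=0$, which is exactly $a)\Leftrightarrow b)$. This is the only genuinely new computation, and it is short.

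Having established $a)\Leftrightarrow b)$, I would then close the loop: Theorem~\ref{integr-closed} states that in dimension seven $a)$ (closed fundamental four form) is equivalent to the integrability of the vertical distribution, which is condition $c)$ here. Chaining the two equivalences yields $a)\Leftrightarrow b)\Leftrightarrow c)$, completing the proof. I do not expect any real obstacle: the substantive content — relating $d\Omega=0$ to integrability of $V$ via the torsion identities \eqref{sixtyfour} — was already carried out in Theorem~\ref{integr-closed}, and Lemma~\ref{closedqc} supplies the one auxiliary vanishing needed to pass from $d\Omega$ to $d*\phi$. The only point requiring a little care is bookkeeping with the Hodge star in seven dimensions to confirm the identity \eqref{starg2} and the normalization $\tfrac12\omega_1^2=\tfrac16\Omega$, but this is routine given $\omega_s\wedge\omega_p=\delta_{sp}\,vol_H$.
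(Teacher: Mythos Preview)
Your proposal is correct and matches the paper's own proof essentially line for line: the paper also derives the equivalence $a)\Leftrightarrow b)$ from the identity $\tfrac12\omega_1^2=\tfrac16\Omega$ together with Lemma~\ref{closedqc} applied to \eqref{starg2}, and then cites Theorem~\ref{integr-closed} for $a)\Leftrightarrow c)$. There is nothing to add.
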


\begin{cor}\label{work}
The $G_2$-structure \eqref{g2} induced from a qc Einstein structure with constant qc scalar curvature is co-calibrated.
\end{cor}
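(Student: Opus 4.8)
The plan is to chain together the equivalences that have already been established, so that essentially no new computation is needed. First I would observe that the hypotheses here --- dimension seven, qc Einstein, constant qc scalar curvature --- are exactly the hypotheses of Corollary~\ref{qchypo1}(b), which therefore tells us that the underlying qc structure is $Sp(1)Sp(1)$-hypo; by Proposition~\ref{qchypo} this is the same as saying that the fundamental four form $\Omega$ of \eqref{fund4} is closed, $d\Omega=0$. Then I would invoke the equivalence a)$\Leftrightarrow$b) of Theorem~\ref{integr-g2}: on a seven dimensional qc manifold, $d\Omega=0$ holds if and only if the $G_2$-structure \eqref{g2} is cocalibrated, $d*\phi=0$. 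Composing the two implications gives the statement.

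If one prefers an argument that does not quote Theorem~\ref{integr-g2} as a black box, one can differentiate $*\phi$ directly from \eqref{starg2}. The three-term sum $\omega_1\wedge\eta_2\wedge\eta_3+\omega_2\wedge\eta_3\wedge\eta_1+\omega_3\wedge\eta_1\wedge\eta_2$ is closed for \emph{every} qc structure by Lemma~\ref{closedqc}, while in dimension seven the identity $\Omega=3\,\omega_1\wedge\omega_1$ gives $d(\tfrac12\omega_1\wedge\omega_1)=\omega_1\wedge d\omega_1=\tfrac16\,d\Omega$; hence $d*\phi=\tfrac16\,d\Omega$, and the problem reduces once more to showing $d\Omega=0$, which is supplied by Corollary~\ref{qchypo1}(b). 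Either route produces the same one-line deduction.

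There is no genuine obstacle here --- the content lies entirely in Theorem~\ref{sixtyseven}, Theorem~\ref{integr-closed}, Theorem~\ref{integr-g2} and Corollary~\ref{qchypo1}. The only place that uses the full strength of the hypothesis is the appeal to Corollary~\ref{qchypo1}(b): in dimension seven the qc Einstein condition by itself does not force the vertical distribution to be integrable, and it is precisely the constancy of the qc scalar curvature --- which, together with the vanishing of the torsion endomorphism, annihilates the terms $\rho_l(\xi_m,X)$ for $l\neq m$ (cf. the last line of \eqref{sixtyfour} and the computation in the proof of Theorem~\ref{integr-closed}) --- that yields $d\Omega=0$. So this assumption must be carried through and invoked at exactly that step; everything else in the proof is formal bookkeeping.
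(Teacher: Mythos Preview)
Your proposal is correct and follows essentially the same route as the paper: obtain $d\Omega=0$ from Corollary~\ref{qchypo1} and then apply Theorem~\ref{integr-g2}. The only cosmetic difference is that the paper, instead of quoting Corollary~\ref{qchypo1}(b) to get ``$Sp(1)Sp(1)$-hypo'' and then unwinding via Proposition~\ref{qchypo}, invokes the structure equations \eqref{strom1} (also furnished by Corollary~\ref{qchypo1}) together with the seven-dimensional identity $\omega_s\wedge\omega_t=\delta_{st}\,vol_H$ to see directly that $d\Omega=6\,\omega_1\wedge d\omega_1=0$; your ``Route~2'' is effectively this computation.
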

\begin{proof}
The assumptions of the corollary lead to  the structure equations \eqref{strom1} which imply $d\Omega=0$ since in dimension seven $\omega_s\wedge\omega_t=\delta_{st}vol.|_H$. Now, Theorem~\ref{integr-g2} completes the proof.
\end{proof}

Earlier \cite{GS} and \cite{FKMS} observed that every 3-Sasakian manifold has $G_2$-forms, which are nearly parallel, and each one of them has a
{ ``squashing"}
which produces another nearly parallel $G_2$-form. These structures are then used to obtain $Spin(7)$ metrics on the metric cone \cite{B}. In \cite{AF} it is proven that every 3-Sasakian manifold has a ``canonical" $G_2$-form which is co-calibrated. From Corollary \ref{work}, a seven dimensional qc Einstein structure with constant qc scalar curvature has a co-calibrated $G_2$-form which by \cite{Hitt} is a good candidate to construct a metric with holonomy contained in $Spin(7)$. It should be pointed out that a seven dimensional qc-Einstein structure with positive qc constant scalar curvature, locally, has a 3-Sasakian structure, see \cite{IMV}.   Nevertheless, the squashed metrics mentioned above are examples of seven dimensional qc Einstein structure with positive constant qc scalar curvature, so the quaternionic contact point of view allows a unified treatment of the construction.

 We turn to the main result allowing the construction of  $Spin(7)$-holonomy metrics on the product of a qc manifold with a real interval.
\begin{thrm}\label{spin7metric}
Let $(M,g,\mathbb Q)$ be a smooth qc Einstein manifold of dimension seven with constant normalized qc scalar curvature $S$.  For a suitable constant $a$, the manifold $M\times\mathbb{R}$ has a
metric with holonomy contained in $Spin(7)$  given by the following metric and $Spin(7)$-form
\begin{equation}\label{e:general Spin(7) metric}
\begin{aligned}
&g=ug_H+\frac{S u^{5/3}-2a}{10u^{2/3}}\left (
(\eta_1)^2+(\eta_2)^2+(\eta_3)^2 \right )+ \frac
{5u^{2/3}}{18 (S u^{5/3}-2a)}du^2,\\
&\Psi=F^-_1\wedge F^-_1 + F^-_2\wedge F^-_2 - F^-_3\wedge F^-_3,
\end{aligned}
\end{equation}
where
\begin{equation}\label{e:local spin(7) forms}
\begin{aligned}
& F^-_1(u)=u\omega_1-\frac {S
u^{5/3}-2a}{10u^{2/3}}\,\eta_2\wedge\eta_3- \frac
16\,\eta_1\wedge du, \\
&F^-_2(u)=u\omega_2-\frac {S
u^{5/3}-2a}{10u^{2/3}}\,\eta_3\wedge\eta_1-\frac
16\,\eta_2\wedge du, \\
& F^-_3(u)=u\omega_3+\frac {S
u^{5/3}-2a}{10u^{2/3}}\,\eta_1\wedge\eta_2+\frac
16\,\eta_3\wedge du.
\end{aligned}
\end{equation}
\end{thrm}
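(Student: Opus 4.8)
The plan is to mirror the strategy of Theorem~\ref{qkmetric}, but now working with the $Spin(7)$-form $\Psi$ and Hitchin's flow equations \eqref{evolspin} instead of the condition $d\Phi=0$. By Corollary~\ref{work}, since $(M,g,\mathbb Q)$ is qc Einstein with constant qc scalar curvature $S$, the $G_2$-form $\phi$ of \eqref{g2} is co-calibrated, $d{*}\phi=0$; this supplies the initial condition in \eqref{evolspin} at every $u$, so what must be arranged is the Hitchin flow $\partial_u({*}\phi)=-d\phi$ along a suitable one-parameter family of $Sp(1)Sp(1)$ structures. I would take the ansatz $\omega_s(u)=f(u)\,\omega_s$, $\eta_s(u)=h(u)\,\eta_s$ (the equal-scaling case of Remark~\ref{r:dim seven QK}), so that the associated $G_2$-form and its dual become $\phi(u)=f h\,(\omega_1\wedge\eta_1+\omega_2\wedge\eta_2+\omega_3\wedge\eta_3)-h^3\,\eta_1\wedge\eta_2\wedge\eta_3$ and ${*}\phi(u)=\tfrac12 f^2\,\omega_1\wedge\omega_1-f h^2(\omega_1\wedge\eta_2\wedge\eta_3+\omega_2\wedge\eta_3\wedge\eta_1+\omega_3\wedge\eta_1\wedge\eta_2)$, using the dimension-seven identities $\omega_s\wedge\omega_t=\delta_{st}\,vol_H$.

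Next I would compute $d\phi$ and $\partial_u({*}\phi)$ explicitly. For $d\phi$ one uses the qc Einstein structure equations \eqref{strom1}, namely $d\omega_i=\omega_j\wedge\alpha_k-\omega_k\wedge\alpha_j$, together with \eqref{streq} which in the qc Einstein case reads $2\omega_i=d\eta_i+\eta_j\wedge\alpha_k-\eta_k\wedge\alpha_j+S\,\eta_j\wedge\eta_k$; the $\alpha$-terms cancel in the combinations that appear (exactly as in Lemma~\ref{closedqc}), leaving $d\phi$ expressed through $\omega_i$, $\eta_j$ and $S$ alone. For $\partial_u({*}\phi)$ one differentiates the displayed expression for ${*}\phi(u)$ in $u$. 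Matching the coefficient of $\omega_1\wedge\omega_1$ (equivalently $vol_H$-type terms) and the coefficient of $\omega_i\wedge\eta_j\wedge\eta_k$ in $\partial_u({*}\phi)=-d\phi$ should yield a system of two ODEs in $f,h$ — I expect something like $(f^2)'=-c_1 f h$ and $(f h^2)'=c_2 f h^2\cdot(\text{something})+c_3 S f h^2+\dots$ with explicit numerical constants. Solving the first for $h$ in terms of $f,f'$, substituting into the second, and then changing variables (the substitution $du=(\text{power of }f)\,dt$, renaming, as in the quaternionic Kähler proof) should produce precisely $f=u$, $h^2=\dfrac{S u^{5/3}-2a}{10\,u^{2/3}}$, and $dt^2=\dfrac{5u^{2/3}}{18(S u^{5/3}-2a)}\,du^2$, giving the metric \eqref{e:general Spin(7) metric} and the forms \eqref{e:local spin(7) forms} after reading off $F^-_s$ from \eqref{qhyp1} with the rescaled $\omega_s,\eta_s$.

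Finally, closedness of $\Psi$ — equivalently $d{*}\phi(u)=0$ for all $u$ together with the flow — forces, by the cited results of Hitchin \cite{Hitt} and Fern\'andez \cite{Fernandez}, that the holonomy of the resulting eight-dimensional metric is contained in $Spin(7)$; the identity ${*}\phi$ from \eqref{starg2} and the co-calibration from Corollary~\ref{work} are what make $d{*}\phi(u_0)=0$ hold, and then the same differentiate-under-$d$ argument used at the end of Proposition~\ref{qevpro} shows $d{*}\phi(u)$ is $u$-independent, hence identically zero. The main obstacle I anticipate is purely computational bookkeeping: getting the exact rational constants in the two ODEs right — in particular tracking the factors of $2$ and $3$ coming from $2\omega_i=d\eta_i+\dots$, from the three-fold cyclic sums, and from the dimension-seven wedge identities — and then verifying that the change of variables indeed linearizes the system to the stated closed form. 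A secondary point to check carefully is the sign in $F^-_3$ (the $+\eta_1\wedge\eta_2$ and $+\eta_3\wedge du$ versus the minus signs in $F^-_1,F^-_2$), which must be consistent with the $Spin(7)$-form $\Psi=F^-_1\wedge F^-_1+F^-_2\wedge F^-_2-F^-_3\wedge F^-_3$ reproducing $2[{*}\phi-\phi\wedge du]$ exactly.
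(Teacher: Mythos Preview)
Your proposal is correct and follows essentially the same route as the paper. The only cosmetic difference is that the paper computes $d\Psi$ directly from \eqref{spin7} rather than separately matching $\partial_t({*}\phi)$ against $-d\phi$; after the ansatz $F^-_i(t)$ as in \eqref{fff7} this yields \eqref{sp7}, hence the first-order relation $h=\tfrac16 f'$ and the second-order ODE $3ff''+(f')^2-9Sf=0$, which is then integrated via the substitution $v=f^{4/3}$ (your ``power of $f$'' change of variables) to produce $h^2=\frac{Sf^{5/3}-2a}{10f^{2/3}}$ and the claimed $dt/df$, after which $f$ is renamed $u$.
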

\begin{proof}
Let $h$ and $f$ be some functions of the unknown $t$ . Consider the 2-forms defined by
\begin{equation}\label{fff7}
\begin{aligned}
&F^-_1(t)=f(t)\omega_1-h^2(t)\eta_2\wedge \eta_3-h(t)\eta_1\wedge dt,\\
&F^-_2(t)=f(t)\omega_2-h^2(t)\eta_3\wedge \eta_1-h(t)\eta_2\wedge dt,\\
&F^-_3(t)=f(t)\omega_i+h^2(t)\eta_1\wedge \eta_2+h(t)\eta_3\wedge dt.
\end{aligned}
\end{equation}
Substituting \eqref{fff7} in \eqref{spin7}, then taking the exterior derivative of the obtained 4-form while applying \eqref{streq} and \eqref{strom1} yields
\begin{multline}\label{sp7}
d\Psi(t)=(2ff'-12fh)\omega_1\wedge\omega_1\wedge dt\\-\Big(2(fh^2)'-2fhS-4h^3\Big)(\omega_1\wedge\eta_2\wedge\eta_3+\omega_2\wedge\eta_3\wedge\eta_1
+\omega_3\wedge\eta_1\wedge\eta_2)\wedge dt.
\end{multline}
Hence, \eqref{sp7} shows that the condition $d\Psi(t)=0$ is equivalent  to the ODE system
\begin{equation}\label{sol7}
3ff''+(f')^2-9S f=0, \qquad h=\frac16 f'.
\end{equation}
To solve this differential equation, we use  $v=f^{4/3}$ as a
variable. Equation \eqref{sol7} shows  that $$\left ( \frac
{dv}{dt}\right )^2= \frac{32(S v^{5/4}-2a)}{5},$$ where $a$ is a
constant. Hence, $\left( \frac {dt}{df} \right)^2 =\left( \frac
{dt}{dv} \right )^2 \left( \frac {dv}{df}\right )^2=\frac
{5f^{2/3}}{18(S f^{5/3}-2a)}$, which implies
$$h^2=\frac{1}{36} (f')^2 =\frac{S
f^{5/3}-2a}{10f^{2/3}}.$$ Renaming $f$ to $u$ gives the metric
and the $Spin(7)$ form $\psi$ in the form \eqref{e:general Spin(7) metric}
together with \eqref{e:local spin(7) forms}.
\end{proof}

\subsection{$Spin(7)$-holonomy metrics based on qc Einstein structure with zero qc scalar curvature}

\subsubsection{$Spin(7)$ holonomy metrics from the quaternionic
Heisenberg group} Consider the 7-dimensional
quaternionic Heisenberg group $\boldsymbol{G\,(\mathbb{H})}$ with structure equations \eqref{4n+3heis} taken for $n=1$ equipped with its standard qc structure. The corresponding eight dimensional $Spin(7)$-holonomy
metric written in Theorem~\ref{spin7metric} can be written in the form
\begin{equation}\label{citsp7}
\begin{aligned}
g=u^3\left ((e^1)^2+(e^2)^2+(e^3)^2+(e^4)^2 \right )+\frac
{a^2}{16}u^{-2}\left ( (\eta_1)^2+(\eta_2)^2+(\eta_3)^2 \right )+
\frac {4}{a^2}u^{6}du^2.
\end{aligned}
\end{equation}
These $Spin(7)$-holonomy metrics are found in \cite[Section
4.3.1]{GLPS}.

\subsubsection{New $Spin(7)$-holonomy metrics from the quaternionic Heisenberg group}
New $Spin(7)$-holonomy metrics can be obtained similarly to the
derivation of \eqref{e:QK using H general case}. We evolve the structure as in \eqref{genevolution}, namely
$\omega_s(u)=f(u)\omega_s, \quad \eta_s(u)=f_s(u)\eta_s$.  Using the
structure equations of the quaternionic Heisenbrg group, $d\eta_s=2\omega_s$, one easily obtains that the second
equation of the  \eqref{evolspin} is equivalent to the system
\begin{equation}\label{ereal7}
\begin{aligned}
f'-2(f_1+f_2+f_3)=0,\quad
(ff_2f_3)'-2 f_1f_2f_3=0,\\
(ff_1f_3)'-2 f_1f_2f_3=0,\quad
(ff_1f_2)'-2 f_1f_2f_3=0.
\end{aligned}
\end{equation}
We integrate the system \eqref{ereal7} to obtain the
next family of $Spin(7)$-holonomy metrics   which seems to be new
\begin{multline}\label{e:Spin(7) using H general case}
g=C \, \left ((u+a_1)(u+a_2)(a_3-u)\right )\, \left
((dx_1^2+dx_2^2+dx_3^2+dx_4^2 \right ) \\+ \frac {2}{C} \frac
{1}{(u+a_1)^{2}}(dx_5+2x_1dx_2+2x_3dx_4)^2 + \frac2C\frac
{1}{(u+a_2)^{2}} (dx_6+2x_1dx_3+2x_4dx_2)^2+ \\\frac2C\frac
{1}{(a_3-u)^{2}}(dx_7+2x_1dx_4+2x_2dx_3)^2
 + \frac {C^3}{8}(u+a_1)^2(u+a_2)^2(a_3-u)^2du^2.
\end{multline}
Taking $a_2=-a_3=a_1$ into \eqref{e:Spin(7) using H general case}
one gets the Spin(7)-holonomy metrics \eqref{citsp7}. Since the
coefficients of the metrics \eqref{e:Spin(7) using H general case}
are continuous with respect to the parameters, and since the
holonomy is equal to Spin(7) for $(a_2,a_3)=(a_1,-a_1)$ then the
same holds for any $(a_2,a_3)$ in an small neighbourhood of
$(a_1,-a_1)$. Thus, we get a three parameter family of metrics
with holonomy equal to $Spin(7)$ which seem to be new.

More generally, for any triple $a_1,a_2,a_3$ of real numbers one
can find an open interval $J\subset\mathbb R$ such that for $u\in
J$ the holonomy of the metrics \eqref{e:Spin(7) using H general
case} equals $Spin(7)$.

\subsection{$Spin(7)$-holonomy metrics based on qc Einstein structure with negative scalar curvature}

\subsubsection{Explicit $Spin(7)$-holonomy metrics from the zero torsion qc-flat structure on $\mathfrak {l_1}$}

 We consider the Lie group $ {L_1}$ defined by the structure equations \eqref{ex11} which can be described in local coordinates with \eqref{loccoord}. In this case $S=-\frac12$ according to Theorem~\ref{m1}. The corresponding metric with holonomy contained in $Spin(7)$ from Theorem~\ref{spin7metric} has the form { (taking $b=-4a$)}
\begin{equation}\label{newspin7metric}
g=u\left ((e^1)^2+(e^2)^2+(e^3)^2+(e^4)^2 \right )+
\frac{(b-u^{5/3})}{20u^{2/3}}\left (
(\eta_1)^2+(\eta_2)^2+(\eta_3)^2 \right )+ \frac
{5u^{2/3}}{9(b-u^{5/3})}du^2.
\end{equation}
and seem to be new.

Calculating the curvature of the metrics \eqref{newspin7metric}, using the local coordinates \eqref{loccoord}
of the group, one finds that there are at least 16 independent curvature forms which
implies that the holonomy of these metrics is equal to Spin(7).

\subsubsection{Explicit $Spin(7)$-holonomy metrics from the zero torsion qc-non-flat structure on $\mathfrak {l_2}$} We consider the Lie group $ {L_2}$  defined by the structure equations \eqref{ex2} which can be described in local coordinates with \eqref{group L2}. In this case $S=-\frac14$ according to Theorem~\ref{m22}. The corresponding metric with holonomy contained in $Spin(7)$ from Theorem~\ref{spin7metric} takes the form given by equation \eqref{e:general Spin(7) metric} with $S=-\frac14$ { and $b=-8a$,
\begin{equation}\label{spin7l2}
g=u\left ((e^1)^2+(e^2)^2+(e^3)^2+(e^4)^2 \right )+
\frac{(b-u^{5/3})}{40u^{2/3}}\left (
(\eta_1)^2+(\eta_2)^2+(\eta_3)^2 \right )+ \frac
{10u^{2/3}}{9(b-u^{5/3})}du^2.
\end{equation}
and seems to be  a new metric with holonomy equal to $Spin(7)$.  The latter fact can be seen by a direct calculation applying the local coordinates \eqref{group L2} to \eqref{spin7l2} and showing that the curvature 2-forms span a space of dimension twenty one.}

\subsection{$Spin(7)$-holonomy metrics from a 3-Sasakian manifold}
This case was investigated in general in \cite{Baz} and explicit
solutions in particular cases are known (see \cite{Baz} and
references therein). We use again only the particular solution to
\eqref{ereal7} found above.  
 The  metric with holonomy contained in $Spin(7)$ from
Theorem~\ref{spin7metric} takes the form given by equation
\eqref{e:general Spin(7) metric} with $S=2$,
\begin{equation*}
g=u\left ((e^1)^2+(e^2)^2+(e^3)^2+(e^4)^2 \right )+\frac
{u^{5/3}-a}{5u^{2/3}}\left ( (\eta_1)^2+(\eta_2)^2+(\eta_3)^2 \right
)+ \frac {5u^{2/3}}{36(u^{5/3}-a)}du^2.
\end{equation*}
{This family includes the (first) complete metric with holonomy $Spin(7)$
constructed by Bryant and Salamon on the total space of the spin bundle over the sphere $S^4$~\cite{BrS,GPP}, see also \cite[p. 6]{Baz}.
}


\begin{thebibliography}{AHit}
\bibitem{AF}
Agricola, I., \& Friedrich, Th., \emph{3-Sasakian manifolds in dimension seven, their spinors and $G_2$-structures.}  J. Geom. Phys.  60  (2010),  no. 2, 326--332.


\bibitem {AK} Alekseevsky, D. \& Kamishima, Y., \emph{Pseudo-conformal
quaternionic CR structure on $(4n+3)$-dimensional manifold}, Ann.
Mat. Pura Appl. {\bf 187} (2008), 487--529;
 math.GT/0502531.

\bibitem{B}
B\"{a}r, C., \emph{Real Killing spinors and holonomy.}  Comm. Math. Phys.  154  (1993),  no. 3, 509--521.

\bibitem{Baz} Bazaikin, Y., \emph{On the new examples of complete noncompact $Spin(7)$-holonomy metrics},
Siberian Math. J., {\bf 48} (2007), 8--25.


\bibitem{Biq1} Biquard, O., \emph{M\'{e}triques d'Einstein
asymptotiquement sym\'{e}triques}, Ast\'{e}risque \textbf{265} (2000).

\bibitem{Biq2} Biquard, O., \emph{Quaternionic contact structures},
Quaternionic structures in mathematics and physics (Rome, 1999),
23--30 (electronic), Univ. Studi Roma "La Sapienza", Roma, 1999.

\bibitem{BG} Boyer, Ch. \& Galicki, K., \emph{3-Sasakian manifolds},
Surveys in differential geometry: essays on Einstein manifolds,
123--184, Surv. Differ. Geom., \textbf{VI}, Int. Press, Boston, MA,
1999.

\bibitem{BGN} Boyer, Ch., \ Galicki, K. \& Mann, B., \emph{The geometry
and topology of $3$-Sasakian manifolds}, J. Reine Angew. Math.,
\textbf{455} (1994), 183--220.

\bibitem{BH} Bryant, R., Harvey, R., {\em Submanifolds in hyper-K\"ahler manifolds},
J. Am. Math. Soc. {\bf 2} (1989), 1--31.

\bibitem{BrS} Bryant, R., Salamon, S., {\em On the construction of some complete metrics
with exceptional holonomy}, Duke Math. J. {\bf 58} (1989), 829--850.

\bibitem{D} Duchemin, D., \emph{Quaternionic contact structures in
dimension 7}, Ann. Inst. Fourier (Grenoble) \textbf{56} (2006), no.
4, 851--885.

\bibitem{D1} Duchemin, D.,  \emph{Quaternionic contact hypersurfaces},
math.DG/0604147.

\bibitem{Fernandez} Fern\'andez, M., {\em A classification of Riemannian manifolds
with structure group $Spin(7)$},  Ann. Mat. Pura Appl. (4)  {\bf 143}  (1986), 101--122.

\bibitem{FS}
Folland, G.B., Stein, E.M., \emph{Estimates for the $\Bar
{\partial}_{b}$ Complex and Analysis on the Heisenberg Group}, Comm.
Pure Appl. Math., \textbf{27}~(1974), 429--522.

\bibitem{FKMS}
Friedrich, T.,  Kath, I., Moroianu, A., \& Semmelmann, U., \emph{On nearly parallel G2-
structures}, J. Geom. Phys. 23 (1997), no. 3-4, 259--286.


\bibitem{GS}
Galicki, K., \& and Salamon, S., \emph{Betti numbers of 3-Sasakian manifolds}, Geom. Dedicata
63 (1996), no. 1, 45--68.

\bibitem{GLPS}
Gibbons, G. W.; L\"u, H.; Pope, C. N.; Stelle, K. S.,
\emph{Supersymmetric domain walls from metrics of special holonomy.}
Nuclear Phys. B  623  (2002),  no. 1-2, 3--46.

\bibitem{GPP}
Gibbons, G.W.,  Page, D.N., Pope, C.N., {\em Einstein metrics on
$S^3,R^3$ and $R^4$ bundles}, Commun. Math. Phys. {\bf 127} (1990),
529--553.


\bibitem{Hitt} Hitchin, N., {\em Stable forms and special metrics.} In Global Differential Geometry:
In Global Differential Geometry: The Mathematical Legacy of Alfred
Gray (Bilbao, 2000) volume 288 of Contemp. Math., American Math.
Soc., Providence, RI, 2001, 70-89.


\bibitem{IMV} Ivanov, S., Minchev, I., \& Vassilev, D.,\ \emph{%
Quaternionic contact Einstein structures and the quaternionic
contact Yamabe problem}, preprint, math.DG/0611658.

\bibitem{IMV1} Ivanov, S., Minchev, I., \& Vassilev, \emph{Extremals
for the Sobolev inequality on the seven dimensional quaternionic
Heisenberg group and the quaternionic contact Yamabe problem},  J. Eur. Math. Soc. (JEMS)  12  (2010),  no. 4, 1041--1067.

\bibitem{IMV2} Ivanov, S., Minchev, I., \& Vassilev, in preparation.

\bibitem{IV} Ivanov, S., \& Vassilev, D.,\emph{Conformal quaternionic
contact curvature and the local sphere theorem}, J. Math. Pures Appl. {\bf 93} (2010), 277-307.

\bibitem{IV1} Ivanov, S., \& Vassilev, D.,\emph{Quaternionic contact
manifolds with a closed fundamental 4-form}, to appear in Bull. London Math. Soc., arXiv:0810.3888.


\bibitem{Kas} Kashiwada, T., \emph{A note on Riemannian space with
Sasakian 3-structure}, Nat. Sci. Reps. Ochanomizu Univ.,
\textbf{22}~(1971), 1--2.

\bibitem{Kuo} Kuo, Y.-Y.,\emph{On almost contact 3-structures}, Tohoku
Math. J. \textbf{22}(1970), 325--332.


\bibitem{Macia}
Maci\'a, O., {\em A Nearly Quaternionic Structure on SU(3)},
arXiv:0908.4183

\bibitem{M} Mostow, G. D., \emph{Strong rigidity of locally symmetric spaces%
}, Annals of Mathematics Studies, No. 78. Princeton University
Press, Princeton, N.J.; University of Tokyo Press, Tokyo, 1973.
v+195 pp.

\bibitem{NB}
Nesterenko, M. O., \& Boyko, V. M., \emph{Realizations of indecomposable solvable 4-dimensional real Lie algebras.}  Symmetry in nonlinear mathematical physics, Part 1, 2 (Kyiv, 2001),  474--477, Pr. Inst. Mat. Nats. Akad. Nauk Ukr. Mat. Zastos., 43, Part 1, 2, Natsi-onal. Akad. Nauk Ukraïni, I-nst. Mat., Kiev, 2002.

\bibitem{ONeill}
O'Neill, B., Semi-Riemannian geometry.
With applications to relativity. Pure and Applied Mathematics, 103. Academic Press, Inc., New York, 1983.

\bibitem{P} Pansu, P., \emph{M\'etriques de Carnot-Carath\'eodory et
quasiisom\'etries des espaces sym\'etriques de rang un}, Ann. of
Math. (2) 129 (1989), no. 1, 1--60.

\bibitem{Sal} Salamon, S., \emph{Almost parallel structures.}
In Global Differential Geometry: The Mathematical Legacy of Alfred
Gray (Bilbao, 2000) volume 288 of Contemp. Math., American Math.
Soc., Providence, RI, 2001, 162--181.

\bibitem{Sw} Swann, A., \emph{Quaternionic K\"ahler geometry and the fundamental $4$-form},
 Proceedings of the Workshop on Curvature Geometry (Lancaster, 1989), 165--173, ULDM Publ., Lancaster, 1989.

\bibitem{Sw2} Swann, A., \emph{HyperK\"ahler and quaternionic K\"ahler geometry},
 Math. Ann. vol 289 (1991), 421-450.

\bibitem{Str}
Strichartz, R. S., \emph{Sub-Riemannian geometry.}  J. Differential Geom.  24  (1986),  no. 2, 221--263.

\bibitem{Wei} Wang, W., \emph{The Yamabe problem on quaternionic contact
manifolds}, Ann. Mat. Pura Appl., \textbf{186} (2007), no. 2,
359--380.

\bibitem{WM}
Watanabe, Y., \& Mori, H., \emph{From Sasakian $3$-structures to
quaternionic geometry.} Arch. Math. (Brno) 34 (1998), no. 3,
379--386.

\end{thebibliography}
\end{document}